\newtheorem{theorem}{Theorem}
\newtheorem{proposition}[theorem]{Proposition}%
\newtheorem{lemma}[theorem]{Lemma}%
\newtheorem{corollary}[theorem]{Corollary}%
\newtheorem{remark}[theorem]{Remark}%
\numberwithin{theorem}{section} 
\DeclareFontFamily{U}{mathc}{}
\DeclareFontShape{U}{mathc}{m}{it}%
{<->s*[1.03] mathc10}{}
\DeclareMathAlphabet{\mathscr}{U}{mathc}{m}{it}
\newtcbox{\mymath}[1][]{%
    nobeforeafter, math upper, tcbox raise base,
    enhanced, colframe=blue!30!black,
    colback=blue!30, boxrule=1pt,
    #1}
\numberwithin{equation}{section}
\newcommand{\cB}{\ensuremath{\mathcal{B}}}
\newcommand{\cE}{\ensuremath{\mathcal{E}}}
\newcommand{\cF}{\ensuremath{\mathcal{F}}}
\newcommand{\cK}{\ensuremath{\mathcal{K}}}
\newcommand{\cL}{\ensuremath{\mathcal{L}}}
\newcommand{\cN}{\ensuremath{\mathcal{N}}}
\newcommand{\cR}{\ensuremath{\mathcal{R}}}
\newcommand{\cT}{\ensuremath{\mathcal{T}}}
\newcommand{\bC}{\ensuremath{\mathbb{C}}}
\newcommand{\bD}{\ensuremath{\mathbb{D}}}
\newcommand{\bH}{\ensuremath{\mathbb{H}}}
\newcommand{\bJ}{\ensuremath{\mathbb{J}}}
\newcommand{\bN}{\ensuremath{\mathbb{N}}}
\newcommand{\bR}{\ensuremath{\mathbb{R}}}
\newcommand{\bS}{\ensuremath{\mathbb{S}}}
\newcommand{\ett}{\tau}
\DeclareMathOperator{\Bop}{\mathcal{B}}
\DeclareMathOperator{\Cop}{{\mathcal{C}}}
\DeclareMathOperator{\Fop}{\mathcal{F}}
\DeclareMathOperator{\Kop}{\mathcal{K}}
\DeclareMathOperator{\Iop}{\mathcal{I}}
\DeclareMathOperator{\Mop}{\mathcal{M}}
\DeclareMathOperator{\Pop}{\mathcal{P}}
\DeclareMathOperator{\Zop}{\mathcal{Z}}
\DeclareMathOperator{\Eop}{\mathcal{E}}
\DeclareMathOperator{\Rop}{\mathcal{R}}
\DeclareMathOperator{\Aop}{\mathcal{A}}
\DeclareMathOperator{\Top}{\mathcal{T}}
\newcommand{\Vsp}{\mathbb{V}}
\newcommand{\oDelta}{\overline{\Delta}}
\newcommand{\uDelta}{{\Delta}}
\newcommand{\rc}{{r_\circ}}
\newcommand{\La}{\Lambda}
\newcommand{\rat}{{\mathfrak{q}}}
\newcommand{\worr}[1]{{\color{black}{#1}}}
\DeclareMathOperator{\Ddom}{\mathrm{D}}
\DeclareMathOperator{\Vdom}{{\mathrm{V}}}
\DeclareMathOperator{\Edom}{{\mathrm{E}}}
\DeclareMathOperator{\Usp}{\mathbb{U}}
\DeclareMathOperator{\Uspext}{\tilde{\mathbb{U}}}
\DeclareMathOperator{\Wsp}{\mathbb{W}}
\DeclareMathOperator{\Hsp}{\mathbb{H}}
\newcommand{\Us}{{\Usp^{(\sigma)}}}
\newcommand{\be}{\begin{equation}}
\newcommand{\ee}{\end{equation}}
\newcommand{\<}{\langle}
\renewcommand{\>}{\rangle}
\newcommand{\e}{\varepsilon}
\newcommand{\dist}{\operatorname{dist}}
\DeclareMathOperator*{\vspan}{span}
 \newcommand{\tripnorm}[1]{|\!|\!| #1|\!|\!|}
\newcommand{\s}{\ensuremath{{s}}}
\renewcommand{\v}{\ensuremath{{v}}}
\newcommand{\ug}{\ensuremath{{u}}}
\renewcommand{\lg}{\ensuremath{\lambda}}
\renewcommand{\u}{\ensuremath{u^\circ}}
\renewcommand{\l}{\ensuremath{{\lambda^\circ}}}
\DeclareMathOperator{\m}{\ensuremath{\mu^\circ}}
\newcommand{\n}{\ensuremath{n}}
\newcommand{\id}{\ensuremath{\Iop}}
\newcommand{\zp}{\ensuremath{z_\circ^\perp}}
\newcommand{\dv}{\ensuremath{\mathrm d\v}}
\newcommand{\dt}{\ensuremath{\mathrm dt}}
\newcommand{\dx}{\ensuremath{\mathrm dx}}
\newcommand{\ks}{\ensuremath{\kappa}}
\newcommand{\U}{\Usp}                      
\newcommand{\R}{\mathbb{R}}
\newcommand{\N}{\mathbb{N}}
\def\argmin{\mathop{\rm argmin}}
\newcommand{\om}[1]{{\color{black}{#1}}} 
\newcommand{\corr}[1]{{\color{black}{#1}}}
\newcommand{\corrw}[1]{{\color{black}{#1}}}
\newcommand{\wo}[1]{{\color{black}{#1}}} 
\newcommand{\wor}[1]{{\color{black}{#1}}} 
\DeclareMathOperator*{\argmax}{arg\,max}
\begin{document}




\title[Eigenvalue Problem in Radiative Transfer]{Accuracy Controlled Schemes for the Eigenvalue Problem of the Radiative Transfer Equation}


\author[1]{\fnm{Wolfgang} \sur{Dahmen}}

\author[2]{\fnm{Olga} \sur{Mula}}

\affil*[1]{\orgdiv{Department of Mathematics}, \orgname{University of South Carolina}, \orgaddress{\country{United States}}}

\affil[2]{\orgdiv{Department of Mathematics and Computer Science}, \orgname{Eindhoven University of Technology}, \orgaddress{\country{the Netherlands}}}


\abstract{The {\em criticality problem} in nuclear engineering asks for the principal 
eigenpair of a Boltzmann operator describing neutron transport in a reactor core. Being able to reliably design, and control such reactors requires
assessing these quantities within quantifiable accuracy tolerances. In this paper we propose a paradigm that deviates from the common practice  of approximately solving the corresponding spectral problem with a fixed, presumably sufficiently fine  discretization.
Instead, the present approach is based on first contriving iterative schemes, formulated 
in function space, that are shown to converge at a quantitative rate without assuming any a priori excess regularity properties, and that exploit only properties of the optical parameters in the underlying radiative transfer model.
We develop the analytical and numerical tools for approximately realizing each iteration step within judiciously chosen accuracy tolerances, verified by a posteriori estimates, so as to still
warrant quantifiable convergence to the exact eigenpair. This is carried out in full first for a Newton scheme. Since this is  only locally 
convergent  we analyze in addition the convergence of a power iteration in function space to produce
sufficiently accurate initial guesses. Here we have to deal
 with intrinsic difficulties posed by compact but 
unsymmetric operators preventing standard arguments used in the finite dimensional case. Our main point  is   that we can avoid any condition on an initial guess to be already in a small neighborhood of the exact solution. We close with a discussion of remaining intrinsic obstructions to a certifiable numerical implementation, mainly related to not knowing the gap between the  {principal} eigenvalue and the next smaller one in modulus.}

\keywords{eigenvalue problem; spectral problem; neutron transport; radiative transfer; error-controlled computation; a posteriori estimation; iteration in function space}


\pacs[MSC Classification]{68Q25, 68R10, 68U05}

\editor{Endre Süli}

\maketitle

\section{Introduction}
\subsection{Context, problem formulation, and main objectives}

Radiative transfer plays a key role in a number of scientific and engineering areas. It is, for example, relevant in understanding certain atmospheric processes and is also an essential constituent in modeling neutron transport in the context of nuclear
engineering. The mathematical problem studied in this paper arises from this latter field. When operating a nuclear reactor, engineers seek to create a sustainable chain reaction in which the neutrons that are produced balance the neutrons that are either absorbed or leave the boundary. When the balance is achieved, the neutron population finds itself in a steady state which does not evolve in time. The specific nature of the balance and the neutron distribution crucially depend on the material and geometry of the reactor, hence the importance of studying this problem at design stages in order to find appropriate core configurations. In the field of nuclear engineering, this task is known as the \emph{criticality} problem, and it is modelled through the computation of a \emph{generalized unsymmetric eigenvalue problem} of a \emph{linear Boltzmann} operator. Depending on the community, the process is also referred to as \textit{radiative transfer} or \textit{neutron transport}.

\subsubsection{A model problem}
\label{ssec:eigenprob}
To be specific, we consider the following model  problem that nevertheless exhibits all essential obstructions:
For a given bounded spatial domain $\Ddom\subset \bR^d$ (the nuclear reactor), we consider the \textit{population density of neutrons} $u=u(x, v)$ located at position $x\in \Ddom$ and travelling with velocity $v\in \Vdom \subset \bR^d$. The balance between the neutrons that are produced and the ones that are lost (by leakage or absorption) is mathematically expressed as the following generalized unsymmetric eigenvalue problem (usually called the criticality problem): the pair $(\ug,\lg)$ with $\ug\neq0$, and $\lg \in \bC $ is said to be an eigenpair of the radiative transfer equation if and only if
\begin{small}
\begin{equation}
\begin{alignedat}{2}
\v\cdot\nabla \ug(x,\v) + \sigma(x,\v) \ug(x,\v)
- \int_{\Vdom} \kappa (x,\v',\v) \ug(x,\v')\,\dv' &= \lg \int_{\Vdom} \varphi (x,\v',\v)\ug(x,\v')\,\dv',\qquad&& \forall (x,\v) \in \Ddom\times \Vdom, \\
\ug &= 0, \qquad &&\text{on $\Gamma_-$}.
\end{alignedat}
\label{Eig}
\end{equation}
\end{small}

This model is also called the radiative transfer equation when applied to uncharged particles other than neutrons (e.g.~light and radiation). In the above formula and in the following, $\nabla$ should be understood as the gradient with respect to the spatial coordinate $x$. Also, $\Gamma_-$ and $\Gamma_+$ are the incoming/outgoing phase-space boundaries defined as
\begin{equation}
  \begin{cases}
\Gamma_- &\coloneqq \{ (x,\v) \in \partial \Ddom \times \Vdom \mid \v\cdot \n(x) < 0 \} \subset \partial\Ddom \times \Vdom \\
\Gamma_+ &\coloneqq \{ (x,\v) \in \partial \Ddom \times \Vdom \mid \v\cdot \n(x) > 0 \} \subset \partial\Ddom \times \Vdom,
  \end{cases}
  \label{inflow}
\end{equation}
where $\n(x)$ denotes the unit outward normal at a point $x\in \partial\Ddom$. The functions $\sigma,\ \kappa$ and $\varphi$ are nonnegative and they are the so-called \emph{optical parameters} related to physical nuclear reactions. $\sigma$ is the total cross-section, $\kappa$ is the scattering cross-section and $\varphi$ is associated to the fission cross-section.

It is well-known that under quite general assumptions (which we will recall, and discuss later on), it is possible to apply a Krein-Rutman theorem and show that the eigenvalue $\lg$ with smallest modulus is simple, real, and positive, and associated to a nonnegative and real eigenfunction $\ug$. In the following, we refer to this pair as the \emph{principal eigenpair}, and we denote it by $(\u, \l)$. The value of $\l$ has a direct physical meaning since it describes the balance between the amount of fission effects on the one hand, and the amount of transport leakage and scattering effects on the other hand. The reactor is called supercritical if \wo{$\l<1$} (the fission chain reaction escalates and needs to be slowed down), subcritical if \wo{$\l>1$} (the chain reaction decreases and the energy production will eventually stop) and critical if $\l=1$ (balanced chain reaction, with sustained energy production). Designing a reactor in which $\l$ is as close to 1 as possible is a key inverse problem in nuclear engineering. To reliably address it, it is necessary to be able to solve forward problems with {\em rigorous, quantifiable accuracy}. \wo{Developing principal strategies towards that end}   is the focus of this paper.

The \wo{proposed approach} is iterative and   involves, as a key constituent, the solution of intermediate \emph{source} problems of the following form: for a given right-hand side function $q$, find $u$ solution to
\begin{equation}
  \label{SP}
\begin{alignedat}{2}
\v\cdot\nabla u(x,\v) + \sigma(x,\v) u(x,\v)
- \int_{\Vdom} \kappa (x,\v',\v)u(x,\v')\,\dv' &= q(x,\v),\qquad&& \forall (x,\v) \in \Ddom\times \Vdom,
\end{alignedat}
\end{equation}
endowed with some inflow boundary condition.

\subsubsection{Central goal}
Of course, the prediction capability of   numerical simulations based on such a model is challenged by a number of uncertainty sources, among them unavoidable model biases. However, once the model has been accepted as a basis for simulations, it is crucial to be able to guarantee the quality of the numerical outcome.
The central objective of this paper is to contrive  a \wo{{\em conceptual pathway},   along with
the relevant analytical tools, towards rigorously quantifying} the deviation of the numerical result from the \emph{exact, infinite-dimensional} solution of the eigenvalue problem in a relevant norm. Note that this goes beyond asking for {\em a priori} error estimates, which have been the subject of extensive research in the past, and whose validity depends typically on extra regularity assumptions which are generally hard to check in practical applications. Instead, our approach hinges, in particular, on  rigorous {\em computable a posteriori} quantities that hold without any excess regularity assumptions but just exploit suitable {\em stable variational formulations} of transport equations.

\subsection{Prior related work and challenges} 

The radiative transfer operator has been extensively studied from a diversity of point of views.  From the perspective of nuclear reactor physics, we refer to \cite{LM1984, Reuss2008, Stacey2018} for a general overview. At the mathematical level, relevant results concerning the well-posedness and properties of the equation can be found in \cite{Mika1972, AB1999, MK2005, DLvol6}. There has also been an intense activity in numerical analysis and scientific computing in order to derive numerical solvers, and study their convergence properties. We refer to, e.g., \cite{Kanschat2009,ACP2011,GS2011b,RGK2012} for important  results in this regard and to, e.g., \cite{WR2009, JDLCB2011, BLMM2014minaret, Mula2014, BJRF2018} for further related relevant developments.

Until very recently, all these works follow the classical paradigm:
\vspace*{1mm}

\begin{center}
 {\bf SA:} {\it first discretize the continuous problem, then solve a fixed discrete problem.}
\end{center}
\vspace*{1.5mm}
We refer to this as the ``standard approach'' (SA). While certainly this has its merits for many application scenarios, an accuracy controlled solution of eigenproblems for radiative transfer and Boltzmann type models poses particular challenges, revolving around 
the following issues: (a) the size of the discrete problem; (b) stability and solver accuracy;
(c) discretization error uncertainty; (d) stability of the spectrum. 

A few comments on these issues are in order. Regarding (a),  the presence of two groups of variables (spatial and velocity variables) renders such problems {\em high-dimensional}. Therefore,  when targeting relevant accuracy levels, related   discretizations 
tend to become very large. This is aggravated by the fact that, lacking rigorous local
error control, common approaches   employ in essence uniform or quasi-uniform meshes
that need to be fine enough to resolve relevant scales. In fact, such concerns are
confirmed by the accuracy controlled
source problem solver from \cite{DGM2020} which is based on totally different concepts
that will be taken up again later below. Matching the accuracy levels, achieved
in \cite{DGM2020} with the aid of adaptively refined grids, just using quasi-uniform meshes would have been infeasible.  
In addition, the appearance of {\em global integral operators} representing scattering and fission effects result in {\em densely populated} matrices.
Hence the applicability of efficient sparse linear algebra techniques is limited.

As for (b),
the lack of symmetry and of preconditioners of similar efficiency as for elliptic problems
further impede on the efficient accurate solution of such very large scale problems
unless strong model simplifications are accepted. Moreover, such discretizations are usually challenged by stability issues due to potentially strong transport effects and a lack of sufficient dissipation. Looking therefore for discretizations that have certain conservation properties still falls short of relating discrete solutions to the 
\wor{continuous} one.

Concerning (c),
when employing iterative solvers the question then is how accurately such large scale problems need to be solved and how to verify this accuracy because machine accuracy may be too demanding. \wo{It actually suffices if
the accuracy of discrete solutions matches}   {\em discretization error accuracy}. 
Unfortunately,  the actual discretization error
remains largely uncertain. In fact, resorting to {\em a priori error estimates} as
an orientation is not really a remedy, mainly for two reasons: first, they are typically derived under unrealistic regularity assumptions whose validity is hard to check in practice. Second, such error bounds involve norms of derivatives of the unknown solution. 
\wo{In summary, an a priori quantification of the discretization error accuracy is 
in general not realistic.}

\wo{This underlines the importance of deriving rigorous {\em a posteriori error bounds}
and to understand how to use them to effectively reduce the error. In stark contrast to elliptic problems, sharp rigorous (lower and upper) a posteriori error bounds for kinetic models are far less developed. In particular, existing ones do not come with any comparable concise refinement strategy and corresponding convergence analysis. Early natural a posteriori bounds 
are based on stability in $L_2$, \cite{Madsen}. Underlying variational formulations for
radiative transfer can be found in \cite{Agoshkov}. Mixed formulations based on parity principles are studied in \cite{Egger}. An a posteriori error analysis for the graph norm based on duality principles is given
in \cite{Han15,Hanbook} while in \cite{Mant} a posteriori bounds are derived that are based on certain scaled graph norms. These latter works consider either constant   kernels $\kappa(x,\v,v')$ or kernels of 
the form $\kappa(x,\v\cdot\v')$. 
To the best of our knowledge these works do {\em not} provide a {\em convergence analysis} 
of concrete adaptive refinement methods based on such error bounds. While this has even been a long-standing open problem for elliptic problems, the non-locality of radiative
transfer models seems to aggravate this issue. To the best of our knowledge, a first rigorous adaptive convergence theory for radiative transfer models
has been developed  in  \cite{DGM2020}. The critical new ingredient there is
the combination of a posteriori error bounds, based on combining well-posedness in the sense of the Babu$\check{s}$ka--Ne$\check{c}$as Theorem, with a quantifiable convergence of an 
idealized fixed point iteration in function space, see below for further comments in
this regard.}   
 
Finally, as for (d), relying on rough estimates may be delicate because the continuity of spectra for such
unsymmetric problems is less forgiving than for symmetric problems.

In summary, we claim that therefore a reliable accuracy quantification for problems
of the present type is hard
to realize on the basis of the standard approach {\bf (SA)}.
In fact, to the best of our knowledge, \cite{SG2011} is the only previous deviation 
from  {\bf (SA)} for the criticality problem in the following sense.
There, the authors study the convergence of an inexact inverse power iteration, viewed
as a perturbation of such an iteration in function space. As explained in more detail in the subsequent section, this is close in spirit to the present work.
However, the authors in \cite{SG2011} work under certain strong assumptions on the optical parameters: for the monoenergetic model\footnote{In the monoenergetic model, the particles are assumed to have all the same kinetic energy so, for the analysis, one works with $\vert v \vert=1$, thus $v\in \bS^{d-1}$, the unit sphere of $\bR^d$.}
with uniformly constant scattering and fission functions $\kappa$ and $\varphi$, the authors show that, on the continuous level, the general unsymmetric eigenproblem \eqref{Eig} is equivalent to a certain related eigenproblem for the scalar flux $\phi(x)=\int_V u(x, v)\dv$, involving a symmetric positive definite weakly singular integral operator (in space only). This correspondence to a symmetric problem (in a space of reduced dimension) allows them to give a convergence theory for an inexact inverse iteration. These \wor{techniques} cannot be extended for nonconstant functions $\kappa$ and $\varphi$. Moreover, concisely quantifying and certifying the necessary closeness
of perturbed iterates to exact ones seems to be missing.

\subsection{Conceptual constituents and novelty}\label{ssec:novelty}
In the light of the previous comments the approach proposed in the present paper deviates
fundamentally from the standard approach {\bf(SA)}. A primary conceptual novelty 
for the present context lies in {\em reversing the steps in {\bf (SA)}}.
In a nutshell, instead of discretizing right away the Boltzmann operator, we first contrive what we call an {\em idealized} or {\em outer} iteration in a suitable function space which needs to provably converge at a certain 
rate in the function space. The numerical method consists then of {\em approximately} carrying out
each step in the idealized iteration within a step-dependent, dynamically updated
accuracy tolerance. So, part of our task is to identify
such tolerances that necessarily decrease in the course of the outer iteration (and which are expected to be rather coarse at initial stages). 
Hence, at no stage of the numerical solution process  deals with a single
(potentially large) discretization: each iteration step produces a discretization needed to stay sufficiently close to the idealized iterate.

The approach is reminiscent of the classical 
{\em nested refinement} idea, carried out, however, in an infinite-dimensional context.
 To obtain this way  in the end results with certified
accuracy, it is crucial that the target tolerance in each  approximate outer iteration step is 
met. This underlines the role of {\em a posteriori} error bounds that need to be
provided for this purpose. Such bounds should hold {\em without any excess regularity}
assumptions and therefore need to rely on estimating {\em infinite-dimensional residuals}
in the right norm. While such techniques have a long history in the context of {\em elliptic problems} this is not at all the case for transport dominated problems. Our approach to deriving  
  such bounds   is essentially  based   on {\em stable ultraweak variational formulations} for the involved transport equations. To our knowledge, this is a crucial
distinction from all other computational approaches to the present eigenproblem.
To aid further commenting on these ingredients we summarize this paradigm  formally as follows:

(CC1) Contrive an {\em idealized} iteration in the relevant Hilbert space that can be shown to converge at a quantifiable rate.

(CC2) Determine perturbation tolerances within which the idealized iteration steps can be carried out so as to still warrant convergence to the exact continuous solution.

(CC3) Determine numerical tools that allow one to realize the tolerances in (CC2), certified by appropriate {\em a posteriori error bounds}.

\paragraph{Comments on (CC1) - (CC3)}
What is behind   (CC1) - (CC3) and their relation to existing work?
Regarding (CC1), we treat two such idealized iterations, namely {\em Newton's
method}, and {\em power iteration} for different respective purposes. The obvious tempting
aspect of Newton iterations is the potentially quadratic convergence, albeit generally
under the condition that the initial guess is sufficiently close to the exact zero.
Studying Newton's method in function space is certainly not a new idea, see e.g. \cite{anselone1968solution}. Likewise, there is a wealth of \wor{literature} addressing inexact Newton methods in Euclidean spaces, see e.g.~\cite{Tisseur2001,dongarra1983improving} as a few examples. Unfortunately, 
the findings we have been able to trace back do not apply to the issues
arising in the specific context of (CC1) - (CC3). Here, it is essential to quantify suitable initial neighborhoods in conjunction with concrete bounds on resolvents when 
restricted to certain subspaces to end up with concrete bounds for the 
mapping properties of Jacobians as we get closer to the exact principal eigenpair. 
In particular, we are not perturbing the operator but need to quantify the 
quality of {\em approximate} Newton updates depending on the state of the idealized iteration. Corresponding tools, presented   in Section \ref{sec:Newton-scheme},
are based on a refined analysis
of the mapping properties of the resolvent. These preparations then allow us
to address (CC2), namely to determine suitable perturbation bounds whose validity
would still warrant convergence to the exact principal eigenpair at a quantifiable rate.
Although this approach is closer in spirit to existing work on {\em adaptive} eigensolvers for 
{\em elliptic problems} (see e.g. \cite{DHS2007, GG2009, CDMSV2018}) the techniques 
in these works do not carry over to the present setting, among other things, due to the lack of symmetry in the present setting. 

We further emphasize an essential distinction from working either in a {\em fixed} finite dimensional
or in a {\em fixed} infinite-dimensional context which is typically the case in 
``non-adaptive'' works. In our case the discretizations vary with evolving accuracy demands. 
While we show how to preserve even quadratic convergence 
for our perturbed idealized iteration, the numerical work will increase significantly
because the tolerance will be seen to decrease also quadratically.
Therefore, we show how properly chosen, less stringent tolerances lead to first order
convergence where we can however {\em choose} the concrete error reduction rate. Which variant would be preferable in practice depends again on ``\wor{critical problem parameters}'' -- in the end on the spectral gap, namely the distance of the principal eigenvalue from
the rest of the spectrum.

Finally, (CC3) brings in a central building block in our approach.
An essential precondition is the fact that ideal iterations and their approximations
are based on the same stable variational formulation. Specifically, we heavily exploit
that, for
the {\em source problem} \eqref{SP}, an {\em accuracy controlled solver} has been 
developed in \cite{DGM2020}. Details about its implementation can be found in \cite{GKM2017,DGM2020}. It is based on the same paradigm of contriving first
idealized iteration schemes -- in this case a fixed point scheme -- in function space followed by an approximate realization of
each iteration step within dynamically updated accuracy tolerances, monitored
by rigorous a posteriori quantities. This hinges crucially on {\em uniformly stable
variational formulations} for the radiative transfer equation, that allows us to
tightly estimate errors by residuals in the dual norm of the test space, despite the lack of coercivity. 
The evaluation of this dual norm, in turn, is facilitated by  {\em Discontinuous Petrov Galerkin} (DPG) concepts. We briefly recapitulate in Section~\ref{sec:vf} the relevant facts needed in this paper. While rigorous a posteriori error control that does not require excess regularity is well-understood for elliptic problems and their close relatives, this is by far not the case for 
the Boltzmann operators arising in the present situation. 
Regarding the role of these latter findings for the present work,
  writing the source problem 
\eqref{SP} abstractly as $\cB u=q$, this allows us to build a routine
\be
\label{routine}
\cB, \,q,\, \eta \,\mapsto  [\cB^{-1},q,\eta]\quad \mbox{such that}
\quad \big\Vert \cB^{-1} q-[\cB^{-1},q,\eta]\big\Vert_{L_2(\Ddom\times \Vdom)} \le \eta.
\ee
In this paper we invoke this routine as a  central tool for an accuracy controlled solution of the criticality eigenvalue problem \eqref{Eig} which is the key objective of this paper. An important finding is that the eigensolver reduces in the end to
a repeated application of the source problem solver subject to judiciously chosen tolerances.

\paragraph{Narrowing the information gap} As the analysis of steps (CC1) - (CC2) shows,
a final ``perfectly certified'' practical realization requires knowledge of 
certain ``critical problem parameters'' that enter for instance, the specification of
perturbation tolerances. Our analysis shows that this can be reduced to knowing the spectral 
gap that quantifies the distance of the principal eigenvalue from the rest of the spectrum.
Of course, this is in general not known, and an interesting question is whether an initial guess on the spectral gap can be upgraded computationally. 
The answer to this question is not clear. In fact,  from an
information theoretic point of view the present kind of spectral problem is 
known to be intrinsically
hard, see e.g. \cite{ben2015computing}. So the present work may be viewed as 
addressing this fact from a ``constructive angle''. This is one of the issues addressed in 
Section \ref{sec:inverse-power-it}.

But even if the spectral gap were known, a corresponding neighborhood of the principal eigenpair
that guarantees convergence of the Newton scheme may be very small so that finding an
admissible initial guess is far from trivial. 
In finite dimensions the {\em power method} often serves to complement  
a locally convergent higher order scheme because initial guesses are then far less 
constrained. According to (CC1) - (CC2), we discuss 
 in Section \ref{sec:inverse-power-it} also the power method as an idealized 
 iteration in function space. Since in this case a perturbed realization is simpler
 than for the Newton scheme, we only discuss convergence in function space, i.e., (CC1).
Indeed, this is the critical issue in the present scenario of an {\em unsymmetric compact operator}.  We have not been able
to find any related results on quantifiable convergence  in the literature. 
One obvious reason is that the typical proof strategies in the matrix case do not carry over. In particular,  one cannot in general make use of an eigenbasis with quantifiable
stability properties. Our approach is therefore quite different and hinges on 
functional calculus and Riesz projections. While one can establish 
convergence per se,  a quantification of convergence
properties seems to require at least some weak spectral decay properties.
We show that if the operator belongs to a {\em Schatten class}
the convergence of the power method can be quantified.
 
\subsection{Outline}
The paper is organized as follows. In Section \ref{sec:weak-formulations} we present the weak formulation of  source and eigenvalue   problems \eqref{SP} and \eqref{Eig}. We detail our working assumptions on the optical parameters essentially in line with \cite{balPhD1997,AB1999, DGM2020}, and  recall known results on the existence and uniqueness of the principal eigenpair $(\u, \l)$. Section \ref{sec:Newton-scheme} presents a Newton-based iterative scheme to approximate  $(\u, \l)$. The scheme is formulated first in an idealized setting at an infinite-dimensional functional level for which we prove local quadratic convergence to $(\u, \l)$. Section \ref{sec:Newton-scheme-perturbed} is then devoted to the approximate realization of this scheme along with the analysis of the perturbed scheme. In the spirit of \cite{DGM2020}, the strategy hinges on realizing the Newton updates
within some judiciously chosen error tolerance, so as to arrive in the end at the desired target accuracy by controlling the deviation of the numerically perturbed iteration from the idealized one. 
In Section \ref{sec:Newton-scheme-perturbed},  we develop strategies that allow us to compute 
approximate Newton-updates within desired accuracy tolerances by just  repeatedly applying the routine  \eqref{routine} in conjunction with
accuracy controlled application schemes of global operators, also discussed in \cite{DGM2020}. Finally, in Section \ref{sec:inverse-power-it} we discuss the generation of  
a suitable initial guess that lies in the convergence neighborhood.
We show that, under certain weak structural spectral assumptions, an inverse power iteration indeed converges. We conclude in Section \ref{sec:conclusion} with some comments putting the findings in this paper into further perspective.

\section{Weak Formulation of the Eigenvalue Problem}
\label{sec:weak-formulations}
Our approach hinges on a stable ultra-weak formulation of the source problem \eqref{SP}. It requires certain (rather mild) assumptions on the optical parameters which we give in Section \ref{sec:ass-opt-param}. We then define the formulation in Section \ref{sec:vf} and discuss the properties of the eigenvalue with smallest modulus in Section \ref{sec:krein-rutman}. 

\subsection{Assumptions on the domain and the optical parameters}
\label{sec:ass-opt-param}
We will work with the following assumptions  {which are in essence those adopted
in related previous works like \cite{GLPS1988,balPhD1997,AB1999,DGM2020,DLvol6} as well: 
\begin{itemize}
\item[(H1)] The spatial domain $\Ddom$ is bounded, convex and has piecewise $\Cop^1$ boundary. This guarantees that $\partial\Ddom$ is smooth enough to have uniquely defined {\em unit outward normals} $\n(x)$ at almost all points $x\in \partial\Ddom$.
\item[(H2)] The velocity domain $\Vdom$ is a compact subset of $\bR^d$ which does not contain $0$. Its $d$-dimensional \wo{Haar} measure is assumed to have mass $|\Vdom|=1$.
\wo{Moreover, $\Vdom$ will always be assumed to contain a set which is homeomorphic to the sphere in $\bR^d$.}  Any vector $\v\in \bR^d$ can be identified with the pair
$$(\s,E) = (v/|v|, |v|^2/2) \in \bS^{d-1}\times \bR^+,$$ where $\s$ is the direction of propagation in the unit sphere $\bS^{d-1}$ of $\bR^d$, and $E$ is the kinetic energy. Hence, 
we can adopt the identification $\Vdom = \bS^{d-1} \times \Edom$ with $\Edom = [E_{\min}, E_{\max}]$ and $0< E_{\min} \leq E_{\max}$.
{
\item[(H3)] The nonnegative kernels $\kappa, \varphi: \Ddom\times \Vdom\times \Vdom \to \bR_+$ \corrw{belong to $L_2(\Ddom\times \Vdom\times \Vdom)$ and} satisfy
\begin{equation}
\label{kernels}
\begin{array}{ll}
\max\Big\{\int_{\Vdom} \kappa (x,v',v)\dv,\, \int_{\Vdom} \kappa (x,v,v')\dv\Big\} \le M,& (x,v')\in \Ddom\times \Vdom\\[2mm]
\max\Big\{\int_{\Vdom} \varphi (x,v',v)\dv,\, \int_{\Vdom} \varphi (x,v,v')\dv\Big\} \le M, & (x,v')\in \Ddom\times \Vdom,
\end{array}
\end{equation}
for some constant $M<\infty$.
\item[(H4)]
The cross-section $\sigma:\Ddom\times \Vdom \to \bR_+$ is bounded on $\Ddom\times \Vdom$ and there
exists an $\alpha >0$ such that
\be 
\label{pos}
\min\Big\{\sigma(x,v')-\int_{\Vdom}\kappa(x,v',v)\dv,\, \sigma(x,v')-\int_{\Vdom}\kappa(x,v,v')\dv\Big\}\ge \alpha,\quad (x,v')\in \Ddom\times \Vdom.
\ee
In the following, we call $\alpha$ the accretivity constant in view of property \eqref{accretive} below.
\item[(H5)]
In addition we assume that the fission kernel is strictly positive, i.e.,
there exists a $c_f>0$ such that
\be
\label{fispos}
\varphi(x,v, v')\ge c_f ,\quad (x,v, v')\in \Ddom\times \Vdom\times \Vdom.
\ee
}
\item[(H6)]  \wo{$\varphi\in C(\overline{\Ddom};L_2(\Ddom\times\Vdom))$. }
\end{itemize}
\bigskip

Note that (H4) implies 
\be
\label{sigma+}
\sigma(x,v) \ge \alpha>0,\quad \forall\,(x,v)\in \Ddom\times \Vdom.
\ee
Moreover, (H4) and (H5) mean that the difference between absorption and scattering remains
positive and that fission takes place everywhere in the phase space.
It has been pointed out in \cite{AB1999} that these assumptions
can be relaxed in favor of physically more realistic ones. In fact,
at the expense of some additional technical effort, all conclusions
remain valid when replacing \eqref{fispos} by the  more physical 
assumption
\be\label{relaxed}
\kappa(x,v,v')+ \varphi(x,v,v')\ge c_f,\quad x\in \Ddom,\, v,v'\in \Vdom,
\ee
\corrw{where now the two kernels may vanish at mutually different places, see
\cite[Chapter II.2]{balPhD1997}.}
Under these premises the scattering and fission operators
\be
\label{eq:scat-kernel}\displaystyle
\begin{array}{c}
u \mapsto  (\Kop u)(x,v) \coloneqq \displaystyle\int_{\Vdom} \kappa(x, v, v') u(x, v') \dv'\\[3mm]
u \mapsto  (\Fop u)(x,v) \coloneqq \displaystyle \int_{\Vdom} \varphi(x, v, v') u(x, v') \dv'
\end{array}
\ee
are bounded linear operators from $L_2(\Ddom\times\Vdom)$ to $L_2(\Ddom\times\Vdom)$,
i.e., 
\be
\label{L2}
\Kop,\,\Fop \in \cL\big(L_2(\Ddom\times \Vdom),L_2(\Ddom\times \Vdom)\big).
\ee
It will be convenient to introduce the bilinear forms
\be
\label{kf}
\begin{array}{c}
k(u,w):= \displaystyle\int_{\Ddom\times\Vdom\times \Vdom}\kappa(x,v',v)u(x,v') w(x,v')\dx\dv\dv',\\[3mm] 
f(u,w):= \displaystyle\int_{\Ddom\times\Vdom\times \Vdom}\varphi(x,v',v)u(x,v') w(x,v')\,\dx\dv\dv',
\end{array}
\ee
corresponding to their variational definitions
\be
\label{KF}
(\Kop u)(w)= k(u,w),\quad (\Fop u)(w)= f(u,w),\quad u, w\in L_2(\Ddom\times \Vdom),
\ee
where $\Kop u, \Fop u$ are viewed as functionals acting on $L_2(\Ddom\times \Vdom)$.
  
\subsection{Variational Formulation of the Source Problem and Related Mapping Properties}
\label{sec:vf}
\newcommand{\bGG}{\mathbb{G}}
In order to eventually arrive at a properly posed eigenproblem we will 
introduce first a stable weak formulation of the source problem
\eqref{SP} following in essence \cite{DGM2020}. \wo{An important first step is to determine the mapping properties of the pure transport operator which, in strong form, is given as $\Top u= v\cdot\nabla u + \sigma\,u$. Introducing the {\em graph space}
\begin{equation}
\Hsp (\Ddom\times \Vdom)\coloneqq \{w \in L_2(\Ddom\times \Vdom) \;:\; \v \cdot\nabla w \in L_2(\Ddom\times \Vdom)\}= \{ u\in L_2(\Ddom\times\Vdom): \Top u\in L_2(\Ddom\times\Vdom)\},
\end{equation}
endowed with the   norm
\be
\label{GN}
\Vert w\Vert _{\Hsp (\Ddom\times \Vdom)}^2:= \Vert w\Vert _{L_2(\Ddom\times\Vdom)}^2 +
\Vert \v\cdot\nabla w\Vert ^2_{L_2(\Ddom\times\Vdom)},
\ee
one readily sees that the bilinear form $t(u,w):= \int_{\Ddom\times\Vdom}w \Top u\,dx dv$
is continuous over $\Hsp (\Ddom\times \Vdom)\times L_2(\Ddom\times\Vdom)$. Under the given
assumptions on the optical parameters, one can show that $\Top$ is lower bounded on $\Hsp (\Ddom\times \Vdom)$ (see e.g. \cite{DGM2020}), i.e., there exists a positive constant $c>0$ such that
$\|\Top u\|_{L_2(\Ddom\times \Vdom)}\ge c\|u\|_{L_2(\Ddom\times \Vdom)}$. Moreover,
it is well-known that if a function $w\in \Hsp (\Ddom\times \Vdom)$ vanishes on either $\Gamma_{\pm}$
it possesses a trace on $\Gamma_{\mp}$ that belongs to the weighted $L_2$-space with norm $\|u\|^2_{L_{2,v}(\Gamma_{\mp})}=\int_{\Gamma_{\mp}} n\cdot v u^2\,d\gamma$ and
\be
\label{G0}
\bH_{0,\mp}(\Ddom\times\Vdom):= {\rm clos}_{\|\cdot\|_{\Hsp (\Ddom\times \Vdom)}}\{u\in C^1(\Ddom\times\Vdom): 
u|_{\Gamma_\mp}=0\}
\ee
is a closed subspace of $\Hsp (\Ddom\times \Vdom)$.  
One then deduces that the operator $\Top$, induced by the weak formulation
\be
\label{swT}
\<\Top u,w\>= \int_{\Ddom\times\Vdom}\big\{v\cdot\nabla u+ \sigma u\big\}w\,\dx\dv= \<f,w\>,
\quad w\in L_2(\Ddom\times\Vdom),
\ee
 is an isomorphism from either space $\bH_{0,\mp}(\Ddom\times\Vdom)$ onto $L_2(\Ddom\times\Vdom)$, and that the norm equivalences 
\be
\label{eqnorms}
\|u\|_{\Hsp (\Ddom\times \Vdom)}\eqsim  \|\Top u\|_{L_2(\Ddom\times\Vdom)}\eqsim \|v\cdot \nabla u\|_{L_2(\Ddom\times\Vdom)}
,\quad u\in  
\bH_{0,\mp}(\Ddom\times\Vdom),
\ee
hold, see e.g. \cite{DHSW2012}. Completely analogous relations are valid for the formal adjoint
$\Top^* u= -v\cdot\nabla u + \sigma u$ (to be distinguished from the {\em dual operator}
$\Top': L_2(\Ddom\times\Vdom)\to \Hsp_{0,\pm} (\Ddom\times \Vdom)'$).

There is a second,    referred to as
{\em ultraweak} formulation. Applying integration by parts, yields  
$$
\<\Top u,w\>:=
\int_{\Ddom\times \Vdom}\big\{ {v\cdot\nabla u}+\sigma u\big\}w\,\dx\dv= 
\int_{\Ddom\times \Vdom} u\big\{\sigma w -\worr{v\cdot \nabla w}\big\} \dx\dv + \int_{\partial\Ddom\times\Vdom} (v\cdot n)uwd\gamma.
$$
The integral over $\Ddom\times\Vdom$ is now well-defined for $u\in L_2(\Ddom\times\Vdom)$ and $w\in \Hsp(\Ddom\times\Vdom)$. \worr{While $\int_{\partial\Ddom\times\Vdom} (v\cdot n)uwd\gamma$ is not defined for any $u\in L_2(\Ddom\times\Vdom)$, substituting for
$u$ given boundary values $g\in L_{2,v}(\Gamma_-)$ and restricting the test functions $w$
to $ \Hsp_{0,+}(\Ddom\times \Vdom)$,}   the trace integral becomes $\int_{\Gamma_-} (v\cdot n)gwd\gamma$ and is thus well-defined. \worr{For homogeneous boundary conditions the boundary integral vanishes.} The corresponding ultraweak formulation reads
\begin{align}
\label{uwT}
t(u,w)&= \int_{\Ddom\times \Vdom} u\big\{\sigma w -\worr{v\cdot \nabla w}\big\} \dx\dv=\<u,\Top^* \worr{w}\>\nonumber\\
&= \int_{\Gamma_-}|v\cdot n|g w\,d\gamma + \<f,w\>, \quad w\in\Wsp:= \Hsp_{0,+}(\Ddom\times \Vdom),
\end{align}
induces the continuous extension of $\Top: \Hsp_{0,-}(\Ddom\times\Vdom)\to L_2(\Ddom\times\Vdom)$ to $\Top:L_2(\Ddom\times\Vdom)\to \big(H_{0,+}(\Ddom\times\Vdom)\big)'$, where we use for convenience the same notation for the transport operator
induced by the weak formulation $\<\Top u,w\>=\<f,w\>$ or by \eqref{uwT}. Note that in the latter case boundary conditions become part of the ``right-hand side functional'' and
are thus {\em natural boundary conditions}. Since both formulations will be used later we summarize the relevant facts for later record as follows.
\begin{theorem}[see \cite{DGM2020}]
 \label{thm:T-1}
 Let
 \be
 \label{UW}
 \U:= L_2(\Ddom\times\Vdom),\quad \Wsp:= H_{0,+}(\Ddom\times\Vdom).
 \ee
 Under the assumptions (H1), (H2) and equation \eqref{sigma+} of (H5), $\Top$ defined
 by \eqref{swT} is boundedly invertible as mapping from $H_{0,-}(\Ddom\times\Vdom)$ onto $\U$, i.e.,
 \be
 \label{Ts-1}
 \Top \in \cL(H_{0,-}(\Ddom\times\Vdom),\Usp),\quad \Top^{-1}\in \cL(\Usp,H_{0,-}(\Ddom\times\Vdom)).
 \ee
 Moreover, for $\Top$, defined by \eqref{uwT}, 
 one has
\be
\label{T-1}
\Top \in \cL(\Usp,\Wsp'),\quad \Top^{-1}\in \cL(\Wsp',\Usp).
\ee
\end{theorem}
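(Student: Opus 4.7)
The plan is to establish \eqref{Ts-1} directly, via accretivity and characteristics, and then to deduce \eqref{T-1} from it by passing to the Banach-space dual of the formal adjoint.

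For \eqref{Ts-1}, continuity $\Top\colon \bH_{0,-}(\Ddom\times\Vdom)\to \Usp$ is immediate from the graph norm \eqref{GN} and the boundedness of $\sigma$. For injectivity and the lower bound, I would integrate by parts: for $u\in\bH_{0,-}$,
\[
\langle \Top u,u\rangle \;=\; \tfrac{1}{2}\int_{\partial\Ddom\times\Vdom}(v\cdot n)\,u^2\,\dGamma \;+\; \int_{\Ddom\times\Vdom}\sigma\,u^2\,\dx\dv.
\]
Since $u=0$ on $\Gamma_-$ and $v\cdot n\ge 0$ on $\Gamma_+$, the boundary term is nonnegative, and \eqref{sigma+} yields $\langle \Top u,u\rangle\ge \alpha\|u\|_{L_2}^2$. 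Cauchy--Schwarz then gives $\|\Top u\|_{L_2}\ge \alpha\|u\|_{L_2}$, and combining with $v\cdot\nabla u=\Top u-\sigma u$ and the boundedness of $\sigma$, one obtains the two-sided equivalence \eqref{eqnorms}. For surjectivity I would use the method of characteristics: given $f\in\Usp$ and writing $\tau_-(x,v)$ for the backward exit time of $x-sv$ from $\Ddom$,
\[
u(x,v)=\int_0^{\tau_-(x,v)}\exp\!\Bigl(-\int_0^s\sigma(x-tv,v)\,\dt\Bigr)\,f(x-sv,v)\,\mathrm{d}s
\]
solves the transport equation along characteristics with vanishing inflow trace. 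The bound $\|u\|_{L_2}\le \alpha^{-1}\|f\|_{L_2}$ follows from Cauchy--Schwarz in $s$, the uniform lower bound $\sigma\ge\alpha$, and a Fubini change of variable $y=x-sv$. A density argument on smooth data places $u$ in $\bH_{0,-}$ solving \eqref{swT}, which together with injectivity establishes \eqref{Ts-1}.

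For \eqref{T-1} I would argue by duality. The formal adjoint $\Top^* w=-v\cdot\nabla w+\sigma w$ satisfies the analogous hypotheses with $\Gamma_\pm$ swapped: the boundary integral in the accretivity identity changes sign, $w$ vanishes on $\Gamma_+$, and $-v\cdot n\ge 0$ on $\Gamma_-$, so the same reasoning gives $\|\Top^* w\|_{L_2}\ge \alpha\|w\|_{L_2}$ and makes $\Top^*\colon \Wsp\to \Usp$ an isomorphism (with characteristics now running forward in $v$). Since the ultraweak form satisfies $t(u,w)=\langle u,\Top^* w\rangle$ on $\Usp\times \Wsp$, the operator $\Top\colon \Usp\to \Wsp'$ induced via $(\Top u)(w)=\langle u,\Top^* w\rangle$ is precisely the Banach-space dual $(\Top^*)'$ and so inherits the isomorphism property: given $\ell\in \Wsp'$, the map $g\mapsto \ell\bigl((\Top^*)^{-1}g\bigr)$ is bounded on $\Usp$, and its Riesz representer $u\in\Usp$ is the unique solution of $\Top u=\ell$, with $\|u\|_{\Usp}\le \|(\Top^*)^{-1}\|\,\|\ell\|_{\Wsp'}$. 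This delivers \eqref{T-1}.

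The main obstacle is the rigorous handling of boundary traces for graph-space functions with merely $L_2$ data: both the integration by parts in the accretivity step and the density argument that places the characteristic solution in $\bH_{0,-}$ require the $L_{2,v}(\Gamma_\pm)$-trace theory on which the closure in \eqref{G0} rests. Once that structural input is granted, every other step is a concrete calculation governed by the single accretivity constant $\alpha$ of \eqref{sigma+}.
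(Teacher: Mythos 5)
Your proposal is correct and follows essentially the same route as the paper and its cited source \cite{DGM2020}: accretivity from \eqref{sigma+} plus the sign of the boundary term for the strong form, the characteristic (Duhamel) representation for surjectivity, and passage to the dual of the adjoint isomorphism $\Top^*:\Wsp\to\Usp$ for the ultraweak statement \eqref{T-1}. The one structural input you flag — the $L_{2,v}(\Gamma_\pm)$ trace theory underlying \eqref{G0} and the integration by parts — is exactly what the paper also takes as known, so there is no gap.
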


In very much the same way we can define the ultraweak formulation of the complete 
radiative transfer source problem as: find $u\in \U=L_2(\Ddom\times\Vdom)$ such that for
$q\in \Wsp'\supset \U$
\be
\label{bf}
b(u,w):=\int_{\Ddom\times \Vdom}\Big\{ u\big(\sigma w- \corrw{v\cdot\nabla w}) -\int_{\Vdom} \ks\, u w \dv'\Big\}\dx \dv 
= \int_{\Gamma_-}|v\cdot n|gw\,d\gamma + q(w),\quad w\in \Wsp.
\ee
 Note that, as natural boundary conditions, they need not be
 incorporated in any trial space. Of course, this part of the right-hand side disappears
 for homogeneous boundary conditions $g=0$ considered in the eigenproblem.}

In summary, for $q\in \Wsp'$  finding $u\in \Usp$ such that
\begin{align}
\label{SP-w}
b(u,w)= q(w),\quad w\in \Wsp,
\end{align}
is called 
{\em ultra-weak formulation} of the source problem \eqref{SP} with homogeneous
inflow boundary conditions. As usual, when $q\in L_2(\Ddom\times \Vdom)$, 
we have  $q(w) = \int_{\Ddom\times \Vdom} q w \dx\dv$.
The accuracy controlled approximate solution of
\eqref{SP-w} will be a central constituent of subsequent eigensolver strategies.

Splitting 
$$
b(\cdot,\cdot)= t(\cdot,\cdot)- k(\cdot,\cdot),
$$
where $t(\cdot,\cdot)$ is the bilinear form from \eqref{uwT} and $k(\cdot,\cdot)$ of the scattering part,
it is straightforward to see that the bilinear forms $b(\cdot,\cdot)$,
$t(\cdot,\cdot)$, $k(\cdot,\cdot)$, $f(\cdot,\cdot)$ are continuous on $\Usp\times \Wsp$.
Defining in analogy to \eqref{KF} the operators
$$
(\Bop u)(w)= b(u,w),\quad (\Top u)(w) = t(u,w),\quad \forall\,u\in \Usp,\, w\in \Wsp,
$$ 
we have, in line with the remarks at the end of the previous section, that
$\Bop, \Top, \Kop, \Fop \in \cL(\Usp,\Wsp')$.
This allows us to interpret \eqref{SP-w} as an operator equation
\begin{equation}
\label{SP-op}
\Bop u = (\Top -\Kop)u = q.
\end{equation}
Its unique solvability hinges on   the mapping properties of
the transport operator $\Top$, given in Theorem \ref{thm:T-1}.
 In particular, using the norm equivalences \eqref{eqnorms}
in conjunction with the accretivity of the Boltzmann operator $\Bop$
\begin{equation}
\label{accretive}
(\Bop u,u)\ge \alpha \Vert u\Vert ^2_{L_2(\Ddom\times \Vdom)},\quad \forall u\in \Hsp _{0,-}(\Ddom\times \Vdom),
\end{equation}
which readily follows from equation \eqref{pos} of (H4), one derives
the following facts with the aid of 
the Babu$\check{s}$ka--Ne$\check{c}$as-Theory.
 \begin{theorem}[see \cite{DGM2020}]
\label{thm:Binverse}
If hypothesis (H1) -- (H4)   hold, then $\Bop$ is a linear norm-isomorphism from
$\Usp$ onto $\Wsp'$, i.\,e., $\Bop^{-1}\in \cL(\Wsp',\Usp)$ exists and the condition number $\kappa_{\Usp,\Wsp'}(\Bop)
:= \Vert \Bop\Vert _{\cL(\Usp,\Wsp')}\Vert \Bop^{-1}\Vert _{\cL(\Wsp',\Usp)} <\infty$ is finite.
\end{theorem}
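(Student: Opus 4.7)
The plan is to apply the Babu\v{s}ka--Ne\v{c}as theorem to the bilinear form $b$ on $\Usp\times\Wsp$. Boundedness of $b$ has already been observed, so the work reduces to verifying an inf-sup condition and a non-degeneracy (injectivity on the test side). The structural observation driving the proof is that, by construction of the ultraweak formulation, $b(u,w)=(u,\Bop^\ast w)_{L_2}$ where $\Bop^\ast w \coloneqq \sigma w - v\cdot\nabla w - \Kop^\ast w$ is the formal adjoint, which maps $\Wsp=H_{0,+}(\Ddom\times\Vdom)$ into $\Usp=L_2$. Hence both conditions of Babu\v{s}ka--Ne\v{c}as reduce to showing that $\Bop^\ast : \Wsp \to \Usp$ is a Banach space isomorphism with quantifiable operator and inverse norms.

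To establish this, I would proceed as follows. Theorem~\ref{thm:T-1} applied with the role of $\Gamma_\pm$ swapped (i.e., to $\Top^\ast = \sigma - v\cdot\nabla$ with outflow trace vanishing) yields an isomorphism $\Top^\ast : \Wsp \to \Usp$ with the norm equivalence $\Vert w\Vert_{\Wsp} \eqsim \Vert \Top^\ast w\Vert_{L_2}$. Next, the second inequality in (H4) supplies the \emph{dual} accretivity $(\Bop^\ast w,w)_{L_2} \ge \alpha \Vert w\Vert_{L_2}^2$ for $w\in\Wsp$, and Cauchy--Schwarz turns this into the a priori bound $\Vert \Bop^\ast w\Vert_{L_2} \ge \alpha \Vert w\Vert_{L_2}$. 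Combining this with the triangle inequality, the bound $\Vert \Kop^\ast\Vert_{\cL(L_2,L_2)}<\infty$ from \eqref{L2}, and the transport isomorphism,
\begin{equation*}
\Vert w\Vert_\Wsp \lesssim \Vert \Top^\ast w\Vert_{L_2} \le \Vert \Bop^\ast w\Vert_{L_2} + \Vert \Kop^\ast\Vert_{\cL(L_2,L_2)} \Vert w\Vert_{L_2} \le \Big(1 + \tfrac{\Vert \Kop^\ast\Vert}{\alpha}\Big)\Vert \Bop^\ast w\Vert_{L_2},
\end{equation*}
which yields injectivity and closed range of $\Bop^\ast$ with quantitative constants. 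For surjectivity I would run the mirror argument on the primal side: by \eqref{accretive} and the same chain of inequalities, the strong operator $\Bop : H_{0,-}(\Ddom\times\Vdom)\to L_2$ is injective with closed range, and then close the loop by duality: if $f\in L_2$ is $L_2$-orthogonal to the range of $\Bop^\ast$, integration by parts against a dense family of smooth $w\in\Wsp$, exploiting the vanishing of $w$ on $\Gamma_+$, forces $f$ to satisfy $\Bop f=0$ with $f|_{\Gamma_-}=0$, so $f\in H_{0,-}$ and $f=0$ by injectivity of the primal $\Bop$.

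With $\Bop^\ast : \Wsp \to \Usp$ boundedly invertible, the inf-sup condition follows by the canonical test choice $w \coloneqq (\Bop^\ast)^{-1}u$ for any $u\in\Usp$, giving $b(u,w)=\Vert u\Vert_{L_2}^2$ and $\Vert w\Vert_\Wsp \le \Vert (\Bop^\ast)^{-1}\Vert\,\Vert u\Vert_\Usp$; non-degeneracy with respect to $w$ is immediate from the injectivity of $\Bop^\ast$ by choosing $u=\Bop^\ast w$. The Babu\v{s}ka--Ne\v{c}as theorem then delivers the isomorphism $\Bop:\Usp\to\Wsp'$ together with explicit bounds on $\kappa_{\Usp,\Wsp'}(\Bop)$ in terms of $\alpha$ and the operator norms of $\Kop^\ast$ and the transport resolvent. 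I expect the main delicate point to be the surjectivity step, specifically the boundary-trace bookkeeping in the closing-the-loop argument: one has to make sure that the vanishing outflow trace built into $\Wsp$ is exactly what upgrades a putative non-trivial element of the cokernel from mere $L_2$ membership to the correct inflow space $H_{0,-}$, so that the primal injectivity can be invoked.
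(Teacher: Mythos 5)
Your proposal is correct and follows the route the paper indicates for this theorem: continuity of $b(\cdot,\cdot)$, the Babu\v{s}ka--Ne\v{c}as conditions verified through the transport norm equivalences \eqref{eqnorms} and the accretivity coming from \eqref{pos}, with the dual accretivity on $\Wsp=H_{0,+}(\Ddom\times\Vdom)$ correctly traced to the second inequality in (H4). The one place where you genuinely deviate from the paper's underlying construction is the surjectivity of $\Bop^\ast:\Wsp\to\Usp$, which you obtain by a cokernel/duality argument requiring the trace bookkeeping you flag as delicate (in particular, a general graph-space function $f$ has a priori only a locally $L_{2,v}$ trace, so the boundary pairing against all of $H_{0,+}$ needs a density or localization step). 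The paper instead gets invertibility constructively from Lemma~\ref{lem:contr}: since $\Vert\Top^{-1}\Kop\Vert_{\cL(\Us,\Us)}\le\rho<1$, one has $\Bop^{-1}=(\id-\Top^{-1}\Kop)^{-1}\Top^{-1}=\sum_{j\ge 0}(\Top^{-1}\Kop)^j\Top^{-1}$ (this is exactly the representation used later in the proofs of Theorems~\ref{thm:positive-op} and~\ref{prop;pso}), which sidesteps the trace issue entirely and, as a bonus, exhibits positivity of $\Bop^{-1}$. Your duality argument buys an approach that does not need the strict inequality $\rho<1$ beyond what accretivity already gives, but the Neumann-series route is shorter here and is the one the rest of the paper leans on.
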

\begin{remark}
\label{rem:weak}
The bound 
\begin{equation}
\label{eq:Binv}
\Vert \Bop^{-1} \Vert_{\cL(\Usp, \Usp)} \leq \alpha^{-1},
\end{equation}
immediately follows from   \eqref{accretive}. Employing the norms $\Vert \Top^*w\Vert _{\Usp}$
or $\Vert \Bop^*w\Vert _{\Usp}$ for $\Wsp$ is expected to yield more favorable bounds for $\Vert \Bop^{-1}\Vert _{\cL(\Wsp',\Usp)}$ when $\alpha$ is small in view of the 
Babu$\check{s}$ka--Ne$\check{c}$as Theorem, \wo{see \eqref{cC}}.
\end{remark}

We proceed collecting a few more prerequisites for a proper formulation 
of the eigenproblem \eqref{Eig}. A key observation
concerns
the {\em weighted} $L_2$-space 
$$
\Us := L^2_\sigma(\Ddom\times \Vdom),\quad \Vert u\Vert ^2_{\Us}
:= \int_{\Ddom\times \Vdom}\sigma |u|^2\dx \dv.
$$
Under our assumptions on the total cross-section $\sigma$, 
one has the norm equivalence
\be
\label{Leq}
\alpha \Vert u \Vert_{\Usp} \leq \Vert u\Vert_{\Us}\leq \Vert u\Vert_{\Usp} \max_{(x, v)\in \Ddom\times \Vdom} \sigma(x,v),\quad \forall u\in \Usp.
\ee
Therefore, the statements in Theorems \ref{thm:T-1}, \ref{thm:Binverse} remain
valid (with constants depending now also on the lower and upper bound for $\sigma$).
\wo{The following fact is shown in \cite[Chapter II.2]{balPhD1997}. 
\begin{lemma}
\label{lem:contr}
Adhering to the above notation, one has
\be
\label{T-1K}
\Vert \Top^{-1}\Kop\Vert _{\cL(\Us,\Us)} \le \rho <1,
\ee
with $\rho$ depending on $\alpha$.
\end{lemma}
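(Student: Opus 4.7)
The plan is to derive the bound via a direct energy estimate on the equation defining $\Top^{-1}\Kop$. Given $u\in \Us$, I would first set $w:=\Top^{-1}\Kop u$. By (H3), $\Kop u \in L_2(\Ddom\times\Vdom)$, so the strong-form statement \eqref{Ts-1} in Theorem \ref{thm:T-1} places $w$ in $\Hsp_{0,-}(\Ddom\times\Vdom)$ and guarantees that $v\cdot\nabla w + \sigma w = \Kop u$ holds pointwise a.e., with vanishing inflow trace on $\Gamma_-$. Testing this identity against $w$ and integrating over $\Ddom\times\Vdom$, the transport contribution $\int (v\cdot\nabla w)\,w$ becomes, after integration by parts in $x$, the boundary integral $\tfrac12 \int_{\partial\Ddom\times\Vdom}(v\cdot\n(x))\,w^2\,d\gamma\,\dv$. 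Because $w|_{\Gamma_-}=0$, this reduces to a nonnegative integral over $\Gamma_+$ and can be dropped, leaving
\begin{equation*}
\|w\|_\Us^2 \;\le\; \int_{\Ddom\times\Vdom\times\Vdom}\kappa(x,v,v')\,u(x,v')\,w(x,v)\,\dx\,\dv\,\dv'.
\end{equation*}

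The crux of the argument is then a symmetric Cauchy--Schwarz split of the right-hand side: writing $\kappa\,u\,w = \bigl(\kappa^{1/2}u\bigr)\bigl(\kappa^{1/2}w\bigr)$ and applying Cauchy--Schwarz in $L_2(\Ddom\times\Vdom\times\Vdom)$, the two resulting $L_2$-factors collapse to $\int|u(x,v')|^2\bigl[\int_v\kappa(x,v,v')\,\dv\bigr]\dx\,\dv'$ and $\int|w(x,v)|^2\bigl[\int_{v'}\kappa(x,v,v')\,\dv'\bigr]\dx\,\dv$. Here (H4), i.e.\ \eqref{pos}, plays the decisive role: after a trivial relabeling of variables, both inner $\kappa$-integrals are bounded pointwise by $\sigma(\cdot)-\alpha$. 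Consequently each factor is dominated by $\int(\sigma-\alpha)|\cdot|^2 \le (1-\alpha/\|\sigma\|_\infty)\|\cdot\|_\Us^2$, using that $\sigma\ge\alpha>0$ and that $\sigma$ is uniformly bounded by (H4). Combining these estimates yields $\|w\|_\Us^2 \le \rho\,\|u\|_\Us\|w\|_\Us$ with $\rho := 1-\alpha/\|\sigma\|_\infty \in [0,1)$, whence $\|w\|_\Us \le \rho\|u\|_\Us$, which is the claim.

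The only subtlety that needs care is the boundary integration by parts: one must know that $w\in\Hsp_{0,-}(\Ddom\times\Vdom)$ admits an outflow trace in $L_{2,v}(\Gamma_+)$ so that the boundary term is well-defined and manifestly nonnegative. This is precisely the graph-space trace property alluded to around equation \eqref{G0}; everything else is a clean algebraic consequence of Cauchy--Schwarz and the two estimates in (H4), and the strict bound $\rho<1$ rests solely on the positivity of the accretivity constant $\alpha$ together with the finite upper bound for $\sigma$.
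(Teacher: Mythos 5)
Your proof is correct and follows essentially the same route as the paper's Appendix~A argument: the energy identity for $\cT w=\cK u$ with the nonnegative outflow boundary term dropped, the symmetric Cauchy--Schwarz split $\kappa uw=(\kappa^{1/2}u)(\kappa^{1/2}w)$, and the two bounds in (H4) to dominate the partial $\kappa$-integrals by $\sigma-\alpha\le\rho\sigma$. The only (harmless) difference is that you make the contraction constant explicit as $\rho=1-\alpha/\|\sigma\|_{\infty}$, whereas the paper leaves $\rho<1$ implicit.
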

To ease accessibility of the underlying arguments we give a proof in Appendix \ref{appendixA}.}   

\begin{theorem}
\label{thm:positive-op}
If (H1) to (H4) hold, then $\Kop$, $\Fop$, $\Top^{-1}$ and $\Bop^{-1}$ are positive operators on
$$
\Usp^+ = L_2^+(\Ddom\times \Vdom) \coloneqq \{ u \in L_2(\Ddom\times \Vdom) \;:\; u(x,v)\geq 0,\, \text{a.e.~}(x,v)\in \Ddom\times \Vdom \}.
$$
\end{theorem}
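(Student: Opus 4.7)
The plan is to verify positivity of each of the four operators in turn, treating $\Kop$ and $\Fop$ by direct inspection, $\Top^{-1}$ via an explicit representation along characteristics, and finally $\Bop^{-1}$ by a Neumann-series argument that combines positivity of $\Top^{-1}$ and $\Kop$ with the contraction bound from Lemma \ref{lem:contr}.

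\textbf{Step 1 (Scattering and fission).} For $u\in \Usp^+$, hypothesis (H3) together with the definitions \eqref{eq:scat-kernel} give
\begin{equation*}
(\Kop u)(x,v)=\int_{\Vdom}\kappa(x,v,v')\,u(x,v')\,\dv'\ge 0,\qquad (\Fop u)(x,v)=\int_{\Vdom}\varphi(x,v,v')\,u(x,v')\,\dv'\ge 0,
\end{equation*}
for a.e.\ $(x,v)$, because the kernels $\kappa,\varphi$ are nonnegative. Hence $\Kop,\Fop$ preserve $\Usp^+$.

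\textbf{Step 2 (Pure transport).} For $f\in \Usp^+$ and $u=\Top^{-1}f\in \Hsp_{0,-}(\Ddom\times\Vdom)$, which is well-defined by Theorem \ref{thm:T-1}, I would integrate the strong form $v\cdot\nabla u+\sigma u=f$ along the characteristic line $s\mapsto x-sv$ starting at the inflow boundary. Writing $\ett^-(x,v)$ for the backward exit time (the distance from $x$ to $\partial\Ddom$ in the direction $-v$, finite by (H1)--(H2)) and using the zero inflow trace, one obtains the classical representation
\begin{equation*}
u(x,v)=\int_0^{\ett^-(x,v)}\exp\!\Bigl(-\int_0^t\sigma(x-rv,v)\,\rd r\Bigr)\,f(x-tv,v)\,\rd t.
\end{equation*}
The exponential weight is positive, the integrand is nonnegative whenever $f\ge 0$, so $u\ge 0$ a.e., i.e.\ $\Top^{-1}\Usp^+\subset \Usp^+$. (Alternatively, one can test the equation with $u^-=\max(-u,0)$ to deduce $u^-\equiv 0$ using accretivity.)

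\textbf{Step 3 (Boltzmann inverse).} The factorization $\Bop=\Top-\Kop=\Top(\Iop-\Top^{-1}\Kop)$ yields, whenever the second factor is invertible,
\begin{equation*}
\Bop^{-1}=(\Iop-\Top^{-1}\Kop)^{-1}\Top^{-1}.
\end{equation*}
By Lemma \ref{lem:contr}, $\|\Top^{-1}\Kop\|_{\cL(\Us,\Us)}\le\rho<1$, so the Neumann series
\begin{equation*}
(\Iop-\Top^{-1}\Kop)^{-1}=\sum_{n=0}^{\infty}(\Top^{-1}\Kop)^n
\end{equation*}
converges in $\cL(\Us,\Us)$, and by the norm equivalence \eqref{Leq} also in $\cL(\Usp,\Usp)$. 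By Steps 1 and 2, each operator $(\Top^{-1}\Kop)^n$ is a composition of positivity-preserving operators, hence maps $\Usp^+$ into itself. Since $\Usp^+$ is a closed convex cone in $\Usp$, it is preserved under norm-convergent limits of nonnegative elements, so $(\Iop-\Top^{-1}\Kop)^{-1}$ is positive. Composition with the positive operator $\Top^{-1}$ then shows that $\Bop^{-1}$ preserves $\Usp^+$, completing the proof.

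The only slightly delicate point is Step 2: one must justify the characteristic representation for the abstract solution delivered by Theorem \ref{thm:T-1}. This is standard once one knows that $u\in \Hsp_{0,-}(\Ddom\times\Vdom)$, since for a.e.\ direction $v$ the function $t\mapsto u(x-tv,v)$ is absolutely continuous along the characteristic and satisfies a one-dimensional linear ODE with the prescribed zero initial condition at the inflow point.
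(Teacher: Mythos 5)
Your proposal is correct and follows essentially the same route as the paper's own proof: positivity of $\Kop,\Fop$ from the nonnegative kernels, positivity of $\Top^{-1}$ by the method of characteristics, and positivity of $\Bop^{-1}$ via the Neumann series $\sum_j(\Top^{-1}\Kop)^j\Top^{-1}$ justified by the contraction bound of Lemma \ref{lem:contr}. Your added remarks (the explicit exponential representation along characteristics and the closedness of the cone $\Usp^+$ under norm limits) merely flesh out details the paper leaves as a sketch.
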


\begin{proof}
Since this is in essence a known fact (see e.g. \cite{AB1999,balPhD1997})
it suffices to sketch the arguments.
First, $\Kop$ and $\Fop$ are positive by construction (see \eqref{eq:scat-kernel}), and positivity of $\Top^{-1}$ follows from  the method of characteristics. Now, to prove that $u = \Bop^{-1} q \in \Usp^+$ for every $q \in \Usp^+$,  one readily checks  that
  $u$   is  
the unique fixed point 
$$
u = \Top^{-1}(\Kop u +  q).
$$
Since, by \eqref{T-1K}, $\Top^{-1}\Kop$ is a contraction on $\Us$, the fixed point iteration
\be
\label{fpiter}
u_{n+1} = \Top^{-1}\Kop u_n + \Top^{-1} q = \sum_{j=0}^n (\Top^{-1}\Kop)^j \Top^{-1} q, \quad \forall n \geq 0,
\ee
converges to $u$ in $\Us$ (hence in $\Usp$). Hence, we may write
$$
u = \sum_{j=0}^\infty (\Top^{-1}\Kop)^j \Top^{-1} q.
$$
We conclude that $u\in \Usp^+$ from the fact that $\Kop$ and $\Top^{-1}$ are positive operators.
 This concludes the proof that $\Bop^{-1}$ is a positive operator.
\end{proof}

The well-posedness of the operator equation \eqref{SP-op},
based on the above choices for $\Usp$ and $\Wsp$, determines now the meaning of 
\eqref{Eig} as an operator eigenvalue problem:
\be 
\label{Eig-op}
\text{Find} \quad(\u, \l) \corr{\,\in \Uspext\coloneqq \Usp \times \bR} \quad \text{such that} 
  \quad\Bop \u = \l \, \Fop \u 
\ee
through the weak formulation
\begin{equation}
\label{Eig-w}
b(\u, w) = \l f(\u, w),\quad \forall w\in \Wsp.
\end{equation}
The analysis of this eigenproblem is based on a reformulation 
involving the inverse of the {\em Boltzmann operator} $\Bop$ as explained 
in the next section. \corr{For a subsequent analysis, it will be useful to equip the space $\Uspext$ with the norm
\begin{align}
\Vert (u,\nu)\Vert := \Big(\Vert u\Vert ^2 + \nu^2\Big)^{1/2}, \quad \forall (u, \nu) \in \Uspext.
\label{eq:norm-eigenpair}
\end{align}
}

\subsection{The Principal Eigenpair $(\u, \l)$}
\label{sec:krein-rutman}

The physical meaning of the principal eigenpair $(\u, \l)$ discussed in Section \ref{ssec:eigenprob} suggests nonnegativity of $\u$, positivity and simplicity of $\l$, and it is important that the neutronic model guarantees these properties. 
It is indeed  folklore  in the literature that, under certain assumptions on the optical parameters, the smallest eigenvalue in \eqref{Eig-op} is real, strictly positive, and simple. A detailed proof of these properties can be found in \cite{balPhD1997} (see also \cite{AB1999}).
For the convenience of the reader, we sketch the underlying line of reasoning
to an extent that is relevant for the subsequent developments in the present paper.

To that end, following in essence \cite{balPhD1997},  a first step is to reformulate \eqref{Eig-op} such that
Krein-Rutman theory is applicable. As shown in the previous section,
$\Bop$ is an isomorphism from $\Us$ to $\Wsp'$ so that
\be
\label{Cop}
\Cop \coloneqq \Bop^{-1} \Fop \in \cL(\Us,\Us) = \cL(\Usp,\Usp).
\ee
Thus, \eqref{Eig-op} is equivalent to finding $(\u,\l)\in \Uspext$ such that
\be
\label{C}
\Cop \u = (\l)^{-1} \u.
\ee
Since we will now be concerned with operators 
in $\cL(\Usp,\Usp)=\cL(\Us,\Us)$, we will  simplify notation writing 
$\Vert \cdot\Vert =\Vert \cdot\Vert _{\Us} = \<\cdot,\cdot\>^{1/2}$. For most statements,
it does not matter whether which of the equivalent scalar products we
are using except when the norm of $\Top^{-1}\Kop$ matters. In this sense
the above choice is in what follows just a convenient default. 
We sometimes refer to $\Us$ explicitly when the choice of the metric is
important.

It will be convenient to denote for any $\ug\in \Usp$ by
\be
\label{span}
\< \ug\> := \vspan\{ \ug \}
\ee
the linear span of $\ug$. 
 
\begin{theorem}
\label{prop;pso}
The operator $\Cop \in \cL(\Usp, \Usp)$ is positive and compact.  Moreover, there exists
a unique simple largest positive  eigenvalue $\m$ of
\begin{equation}
\label{Ceigen}
\Cop \u= \m\u .
\end{equation}
 The corresponding (up to normalization) unique eigenstate $\u$ is  non-negative. We henceforth refer to $(\m,\u)$ as principal eigenpair,  
and
\be
\label{U0}
\Usp_\circ \coloneqq\< \u\>  = \<\Cop \u \>
\ee
is the eigenspace associated to the principal eigenvalue $\l$ of our problem 
\eqref{Eig-op}.
\end{theorem}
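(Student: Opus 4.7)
The plan is to cast \eqref{Eig-op} as a standard spectral problem for the compact positive operator $\Cop = \Bop^{-1}\Fop$ and invoke Krein--Rutman theory. First I would verify the two basic properties. \emph{Positivity:} since $\varphi\ge 0$, the integral operator $\Fop$ maps $\Usp^+$ into itself, and by Theorem \ref{thm:positive-op} the same holds for $\Bop^{-1}$, so $\Cop(\Usp^+)\subset\Usp^+$. \emph{Compactness:} hypothesis (H3) yields $\varphi\in L_2(\Ddom\times\Vdom\times\Vdom)$, hence $\Fop$ is Hilbert--Schmidt on $\Usp$ and therefore compact; composing with the bounded $\Bop^{-1}\in\cL(\Usp,\Usp)$ from Theorem \ref{thm:Binverse} preserves compactness.

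Next I would show $r(\Cop)>0$. Pick any non-zero $u_0\in\Usp^+$; by (H5), $\Fop u_0(x,v)\ge c_f\int_\Vdom u_0(x,v')\,\dv'$, which is non-zero and non-negative. Applying $\Bop^{-1}$ via the Neumann expansion $\Bop^{-1}=\sum_{j\ge 0}(\Top^{-1}\Kop)^j\Top^{-1}$ established in the proof of Theorem \ref{thm:positive-op} preserves this non-zero positivity, so $\Cop^n u_0\neq 0$ for every $n$, ruling out $r(\Cop)=0$. The classical Krein--Rutman theorem, applied to the compact positive operator $\Cop$ on the Banach lattice $\Usp$ with reproducing cone $\Usp^+$, then produces $\m\coloneqq r(\Cop)>0$ as an eigenvalue of $\Cop$ with an associated non-negative eigenvector $\u$.

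The simplicity statement is the delicate point. I would upgrade positivity to \emph{irreducibility} (in the Banach-lattice sense): for every $0\neq u\in\Usp^+$, $\Cop u$ lies in the quasi-interior of $\Usp^+$, i.e.\ is a.e.\ strictly positive on $\Ddom\times\Vdom$. The argument chains three effects: $\Fop u\ge c_f\int_\Vdom u(\cdot,v')\,\dv'$ is strictly positive at every $x$ where $u(x,\cdot)\not\equiv 0$; $\Top^{-1}$ propagates this positivity along the straight characteristics of the streaming operator, enlarging its spatial support; finally, the scattering factors $(\Top^{-1}\Kop)^j$ in the Neumann series for $\Bop^{-1}$ smear positivity across velocities, so that the sum is a.e.\ strictly positive on the whole phase space. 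The main obstacle will be making this spreading rigorous, since $\Usp^+$ has empty interior in $L_2$, and one must argue at the level of quasi-interior points rather than invoke the simpler version of strong Krein--Rutman available when the cone has interior. Once irreducibility is in hand, the strong Krein--Rutman theorem (de Pagter's theorem for irreducible compact positive operators on Banach lattices) yields that $\m$ is both algebraically and geometrically simple and is the only eigenvalue of $\Cop$ admitting a non-negative eigenvector.

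Finally I would return to \eqref{Eig-op}. Setting $\l\coloneqq\m^{-1}>0$, the identity $\Cop\u=\m\u$ rewrites as $\Bop\u=\l\Fop\u$, so $(\u,\l)$ solves the original generalized eigenproblem. The eigenspace is $\Usp_\circ=\<\u\>=\<\Cop\u\>$ since $\Cop\u=\m\u$ is a scalar multiple of $\u$. For uniqueness, any eigenpair of \eqref{Eig-op} with $\l>0$ and a non-negative eigenfunction yields an eigenvector of $\Cop$ at the eigenvalue $\l^{-1}$, which by the Krein--Rutman conclusion must equal $\m$, so $\l$ is determined and $\u$ unique up to positive scaling.
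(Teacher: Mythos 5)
Your positivity argument and the reduction to Krein--Rutman follow the same route as the paper, and your plan for simplicity via irreducibility and de Pagter's theorem is a legitimate alternative to the duality argument the paper carries out in Appendix B (which tests against the positive eigenstate of the adjoint and derives a contradiction from $\Cop^2|w|>|\Cop^2 w|$). However, your compactness step contains a genuine error. The operator $\Fop$ defined in \eqref{eq:scat-kernel} is \emph{not} Hilbert--Schmidt on $\Usp=L_2(\Ddom\times\Vdom)$, and in fact is not compact there: it integrates only in the velocity variable and acts pointwise in $x$, so as an operator on the full phase space its kernel is $\varphi(x,v,v')\,\delta(x-x')$, which is not square integrable over $(\Ddom\times\Vdom)^2$; the hypothesis $\varphi\in L_2(\Ddom\times\Vdom\times\Vdom)$ in (H3) does not make it so. Concretely, for $\varphi\equiv 1$ (admissible under (H3)--(H6) since $|\Vdom|=1$) one has $\Fop u=u$ on the infinite-dimensional subspace of velocity-independent functions, so $\Fop$ has the eigenvalue $1$ with infinite multiplicity and cannot be compact. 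Hence the argument ``$\Fop$ compact, $\Bop^{-1}$ bounded, therefore $\Cop$ compact'' collapses at its first step.

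The compactness of $\Cop$ is a genuine averaging effect of the \emph{composition} of the velocity integral with the inverse transport operator, not a property of either factor. This is exactly where the paper needs (H6) and the geometric assumptions: it shows that $\Fop^*$ is compact as a map from the graph space $H_{0,+}(\Ddom\times\Vdom)$ into $L_2(\Ddom\times\Vdom)$ (citing the velocity-averaging-type compactness result in Dautray--Lions), and then concludes that $\Fop^*\Top^{-*}$ is compact on $L_2$ because, by \eqref{T-1}, $\Top^{-*}$ maps $L_2$ boundedly into $H_{0,+}(\Ddom\times\Vdom)$; taking adjoints gives compactness of $\Top^{-1}\Fop$, and the bounded Neumann series $(\id-\Top^{-1}\Kop)^{-1}$ preserves it. Without an averaging argument of this kind your proposal does not establish compactness of $\Cop$, and Krein--Rutman cannot be invoked; the remaining steps (and your irreducibility sketch, which you rightly flag as delicate since $\Usp^+$ has empty interior) all hinge on it.
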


\begin{proof} We only sketch the main arguments of the proof following in essence \cite{balPhD1997}.   \wo{For easier access to key arguments
we present additional  relevant details in Appendix \ref{appendixB}}. Denoting $\id$ as the identity operator, we write
$$
\Cop = (\Top- \Kop)^{-1}\Fop = (\id - \Top^{-1}\Kop)^{-1}\Top^{-1}\Fop.
$$
Next we use contractivity of $\Top^{-1}\Kop$ on $\Us$ to write
$$
\Cop u = \sum_{\wor{j=0}}^\infty (\Top^{-1}\Kop)^j \big(\Top^{-1}\Fop\big) u, 
$$ 
and use positivity of the operators $\Top^{-1},\Fop, \Kop$ to 
confirm positivity of $\Cop$. 

\wo{Since $(\id - \Top^{-1}\Kop)^{-1}=\Bop^{-1}\in
\cL(\Wsp',\Us)\supset \cL(\Us,\Us)$ compactness
of $\Cop$ follows  as soon as we have established compactness of $\Top^{-1}\Fop$.
To that end, it suffices to confirm compactness of $\cF^*\cT^{-*}: \cL(L_2(\Ddom\times\Vdom),L_2(\Ddom\times\Vdom))$. In view of \eqref{T-1}, this in turn
follows when $\cF^*\in \cL(H_{0,+}(\Ddom\times \Vdom),L_2(\Ddom\times\Vdom))$ is compact.
Under the given assumptions on $\Ddom,\Vdom$, condition (H6) allows us to apply
\worr{\cite[Corollary 1, Appendix to \S 5, p. 415]{DLvol6}} which states exactly this fact.}
\wo{Since  applying bounded operators to a compact one preserves compactness, the compactness of $\Cop$ follows from the above Neumann series.
 Krein-Rutman's Theorem
asserts then that the spectral radius of $\Cop$ agrees with a positive eigenvalue $\m$
($=\l^{-1}$), associated with a non-negative eigenstate $\u$.
The proof of strict positivity of $\u$ (except on $\Gamma_-$)
and of simplicity of $\l$ can be found in the thesis 
\cite[Chapter II.2]{balPhD1997}. We state these claims as a lemma whose proof is given
for the interested reader in Appendix B.}
\wo{
\begin{lemma}
\label{lem:bal}
The eigenvector $\l$ is simple. Moreover,  the associated eigenstate $\u$ is (up to normalization)  unique and is strictly positive on $(\Ddom \times \Vdom)\setminus \Gamma_-$.
\end{lemma}
This concludes the proof of Theorem \ref{prop;pso}.
}
\end{proof}

\wo{
In the original terms Theorem \ref{prop;pso} can be restated as follows.
\begin{corollary}
\label{cor:spectrum}
If (H1) to (H6) hold, then the eigenvalue problem \eqref{Eig-op}
has a smallest  simple real  positive eigenvalue $\l=(\m)^{-1}$ where $\m$ is the largest eigenvalue of $\Cop$ from Theorem \ref{prop;pso}. Its associated eigenvector $\u$ is positive   and   equals  the  eigenstate of $\Cop$ associated to $\m$.
\end{corollary}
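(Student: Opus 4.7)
The plan is to deduce Corollary \ref{cor:spectrum} directly from Theorem \ref{prop;pso} by translating between the two equivalent eigenproblem formulations \eqref{Eig-op} and \eqref{C}. The core observation is that, since $\Bop$ is a norm-isomorphism from $\Usp$ onto $\Wsp'$ by Theorem \ref{thm:Binverse}, the eigenproblem $\Bop u = \l\Fop u$ is equivalent to $\Cop u = \l^{-1}u$ with $\Cop = \Bop^{-1}\Fop$. In particular, $\lambda=0$ cannot occur as an eigenvalue of \eqref{Eig-op}, since $\Bop u=0$ forces $u=0$, so the reciprocal correspondence $\lambda \leftrightarrow \lambda^{-1}$ between nonzero eigenvalues of \eqref{Eig-op} and nonzero eigenvalues of $\Cop$ is well defined in both directions.

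First I would invoke Theorem \ref{prop;pso} to obtain the largest simple positive eigenvalue $\m$ of $\Cop$, together with its associated positive (by Lemma \ref{lem:bal}) eigenstate $\u$ spanning $\Usp_\circ$. Setting $\l := (\m)^{-1}$ and applying the equivalence yields that $(\u,\l)$ is an eigenpair of \eqref{Eig-op}. Because Krein--Rutman identifies $\m$ with the spectral radius of $\Cop$, any other eigenvalue $\lambda$ of \eqref{Eig-op} satisfies $|\lambda^{-1}|\le \m$, hence $|\lambda|\ge (\m)^{-1}=\l$. This proves that $\l$ is the eigenvalue of smallest modulus of \eqref{Eig-op}, and it is real and positive because $\m$ is.

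Next, simplicity of $\l$ as an eigenvalue of \eqref{Eig-op} is inherited directly from simplicity of $\m$ for $\Cop$. Concretely, applying $\Bop^{-1}$ to $\Bop u = \l\Fop u$ gives $u = \l\Cop u$, i.e., $\Cop u = (\l)^{-1}u = \m u$, so
$$
\{\, u \in \Usp \,:\, \Bop u = \l\Fop u \,\} \;=\; \{\, u \in \Usp \,:\, \Cop u = \m u\,\} \;=\; \<\u\> \;=\; \Usp_\circ,
$$
which is one-dimensional. Positivity of $\u$ is then a restatement of Lemma \ref{lem:bal}.

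The proof is in essence a rephrasing, so I do not anticipate a genuine obstacle beyond what has already been resolved in Theorem \ref{prop;pso}. The only point meriting explicit verification is that ``smallest modulus'' on the $\Bop u=\l\Fop u$ side of the correspondence does translate to ``largest modulus'' on the $\Cop u=\m u$ side; this rests exclusively on the isomorphism property of $\Bop$ from Theorem \ref{thm:Binverse}, which rules out $\lambda=0$ as an eigenvalue and makes the map $\lambda\mapsto\lambda^{-1}$ a bijection between the relevant spectra.
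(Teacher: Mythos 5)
Your proposal is correct and follows exactly the route the paper intends: the corollary is stated there as a direct restatement of Theorem \ref{prop;pso} via the equivalence between \eqref{Eig-op} and \eqref{C}, with the reciprocal eigenvalue correspondence justified by the isomorphism property of $\Bop$ (Theorem \ref{thm:Binverse}) and positivity/simplicity imported from Lemma \ref{lem:bal}. Your explicit verification that $\lambda=0$ cannot occur and that the eigenspaces coincide merely fills in details the paper leaves implicit.
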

}

\section{Strategy and Concepts for (CC1), (CC2)}
\label{sec:Newton-scheme}
 This section is devoted to preparing the theoretical foundations for the
paradigm outlined in  Section \ref{ssec:novelty}.
In view of the tempting prospect of a quadratic convergence we 
focus first on    {\em Netwon's method} as idealized iteration in the {\em infinite-dimensional space} $\Usp$, see (CC1). We  indeed show local quadratic convergence to the exact principal eigenpair $(\u,\l)$ of problem \eqref{Eig} provided that the initial guess
belongs to a certain neighborhood that can be quantified in terms of the  spectral gap
(to be defined below) and mapping properties of resolvent. Based on these findings we analyze perturbed versions (CC2)
that serve as the basis of numerical realizations.
Such an envisaged numerical realization is to approximately realize the required operations 
within judiciously chosen, {\em dynamically updated accuracy tolerances}. Their validity
needs to be confirmed 
in an a posteriori fashion to avoid requiring any excess regularity.  A key role is played
by establishing refined mapping properties of resolvents restricted to certain subspaces.
\wo{Properties of resolvents for compact operators are most conveniently described for
Hilbert spaces over $\bC$. We therefore use in what follows for convenience the same notation $\Usp$ 
also for its complexification.
}

\subsection{\wor{Newton's method}}\label{ssec:Newton}
As noted earlier, the problem of finding the principal eigenpair of the 
generalized operator eigenproblem \eqref{Eig-op} is equivalent to solving
the standard eigenproblem \eqref{Ceigen} with $\m=(\l)^{-1}$.
We recall that we will use the notation   
$\Vert \cdot\Vert =\Vert \cdot\Vert _{\Us} =: \<\cdot,\cdot\>^{1/2}$.
 It will be convenient to normalize eigenstates such that
\be
\label{gauge}
 \Vert\Cop \u \Vert=2
 \quad \Leftrightarrow \quad
 \Vert \u \Vert= 2 \l \wo{= 2/\m}.
 \ee

To formulate a Newton scheme, 
we define the {\em residual} function
\begin{align*}
R: \Uspext &\to \Uspext \\
(u, \nu) &\mapsto
R(u, \nu) \coloneqq
\begin{bmatrix}
R_1(u, \nu) \\
R_2(u, \nu)
\end{bmatrix}
=
\begin{bmatrix}
\Mop_\nu u\\
1 - \Vert \Cop u \Vert^2 /2.
\end{bmatrix}
\end{align*}
where, \wo{for every $\nu\in \bC\setminus\{0\}$,}  
$$
\Mop_\nu \coloneqq \id - \nu \Cop    
$$
is an operator from $\Usp$ into itself. 

 The (Fr\'{e}chet) derivative $DR(\bar u,\bar\nu): \Uspext\to 
 \Uspext$ of $R$ at a
 point $(\bar u, \bar \nu)$, evaluated at a point $(u,\nu)$, is given by
\be
\label{Rlin}
DR(\bar u, \bar \nu) (u,\nu)
\coloneqq
\begin{bmatrix}
\Mop_{\bar \nu}  & - \Cop \bar u  \\
- \left< \Cop \bar u, \cdot \right> & 0 \\
\end{bmatrix}
\begin{bmatrix}
u \\ \nu
\end{bmatrix}.
\ee

\wo{Assuming for the moment invertibility of $DR(\bar u, \bar \nu) (u,\nu)$,} the Newton scheme consists in building a sequence
\begin{equation}
\label{Newton}
(u_{n+1}, \lambda_{n+1}) = (u_n+\delta_n^{(u)}, \lambda_n +\delta_n^{(\lambda)}),\quad \forall n\in \bN,
\end{equation}
starting from a given initial guess $(u_0, \lambda_0)\in \Uspext$. The component $(\delta_n^{(u)}, \delta_n^{(\lambda)})\in \Uspext$ is an incremental update satisfying
\begin{equation}
\label{newton-update}
DR(u_n, \lambda_n)(\delta_n^{(u)}, \delta_n^{(\lambda)}) = - R(u_n, \lambda_n).
\end{equation}
The operator equation \eqref{newton-update} has the block-structure of a saddle point problem which can be written as
\be
\label{spp}
\begin{bmatrix}
\Mop_{\lambda_n}  & - \Cop u_n  \\
- \left< \Cop u_n, \cdot \right> & 0 \\
\end{bmatrix}
\begin{bmatrix}
\delta_n^{(u)} \\ \delta_n^{(\lambda)}
\end{bmatrix}
=
- 
\begin{bmatrix}
\Mop_{\lambda_n} u_n\\
1-\Vert \Cop u_n \Vert^2/2
\end{bmatrix}.
\ee
Assuming that $\lambda_n \neq \l$, and applying $\Mop_{\lambda_n}^{-1}$ to the first line yields
\begin{equation}
\label{eq:hn}
\delta_n^{(u)} = \delta_n^{(\lambda)} \Mop_{\lambda_n}^{-1} \Cop u_n - u_n .
\end{equation}
Since, from the second line, $\left< \Cop u_n , \delta_n^{(u)}\right> = 1-\Vert \Cop u_n \Vert^2/2$, we deduce the value of $\delta_n^{(\lambda)}$ by taking the scalar product with $\Cop u_n$ in \eqref{eq:hn}, which yields
\be
\label{mun}
\delta_n^{(\lambda)} = \frac{1+\<\Cop u_n,u_n\> - \Vert \Cop u_n\Vert ^2/2}{\left< \Cop u_n, \Mop_{\lambda_n}^{-1} \Cop u_n\right>}.
\ee
The updated iterates \eqref{Newton} can thus be written as
\begin{align}
 u_{n+1} &= \delta_n^{(\lambda)} \Mop_{\lambda_n}^{-1} \Cop u_n 
 = \frac{1+\<\Cop u_n,u_n\> - \Vert \Cop u_n\Vert ^2/2}{\left< \Cop u_n, \Mop_{\lambda_n}^{-1} \Cop u_n\right>}
 \Mop_{\lambda_n}^{-1} \Cop u_n \\[3mm]
 \lambda_{n+1} &= \lambda_n + \frac{1+\<\Cop u_n,u_n\> - \Vert \Cop u_n\Vert ^2/2}{\left< \Cop u_n, \Mop_{\lambda_n}^{-1} \Cop u_n\right>}.
\end{align}
The whole scheme relies on \wo{evaluating $\Mop_{\lambda_n}^{-1} \Cop u_n$, i.e., 
on invertibility of $\Mop_{\lambda_n}$, which
requires}  finding at each step the function $z_n\in \Usp$ such that
\begin{equation}
\label{eq:main-pb-to-invert}
\Mop_{\lambda_n} z_n = \Cop u_n.
\end{equation}

Solving \eqref{eq:main-pb-to-invert} has to be handled with care since, on the one hand, the sequence $\big( (u_n,\lambda_n)\big)_{n\geq 0}$\\ is  to converge to the principal eigenpair $(\u,\l)$ but, on the other hand, as $(u_n, \lambda_n)$ approaches $(\u,\l)$, the condition of the operator $\Mop_{\lambda_n}=\id - \lambda_n \Cop$ tends to infinity. In particular, we lose uniqueness since the operator $\Mop_\l$ has the nontrivial kernel $\Usp_\circ$ from \eqref{U0}. However, by uniqueness of the principal eigenpair $(\u,\l)$, the restriction $\Mop_\l \vert_{\Usp_\circ^\perp}$ of $\Mop_\l $ to $\Usp_\circ^\perp$ is injective. This motivates to study mapping properties at the principal eigenpair in order to rigorously prove a quantifiable convergence of the \worr{Newton scheme}.

\subsection{\wo{Some Prerequisites from the Spectral Theory of Compact Operators}}\label{ssec:spec}

To quantify the mapping properties of the Jacobian \eqref{Rlin} as well as to prove
later convergence of the power method as a means to generate suitable initial guesses
(see Theorem \ref{thm:power} in Section \ref{sec:inverse-power-it}}) we draw on some classical
facts from the spectral theory of compact operators on Hilbert spaces, see e.g. \cite{Conway}. Let $\sigma(\Cop)$ denote the {\em spectrum} of $\Cop$ (\worr{which contains \{0\} for infinite-dimensional Hilbert spaces}). 
Since there is hardly any confusion with indexing in the \worr{Newton iterates} it will be convenient to
enumerate the elements of the
spectrum
$$
\sigma(\Cop)= \{\mu_j:j=1,\ldots,\infty\}
$$
where $|\mu_j|$ decreases with increasing $j\in\N$. Thus, we replace the original
notation $(\l,\ug^\circ)$ for the principal eigenpair by $(\mu_1,\ug_1)$,
keeping in mind that $\mu_1= \mu^\circ= (\l)^{-1}$, $\ug^\circ=\ug_1$.

The open set 
$\rho(\Cop):= \bC\setminus \sigma(\Cop)$ is called {\em resolvent set}.
The operator
\be
\label{resolventop}
\Rop_{\Cop}(\zeta):= (\zeta\id -\Cop)^{-1},\quad \zeta\in \rho(\Cop),
\ee
is called the {\em resolvent operator}
\wo{which is
obviously related to the   operator $\Mop_\nu$  by  
\be
\label{resop}
\Mop_\nu = \nu \Rop_{\Cop}(\nu^{-1})^{-1}.
\ee
}
 $\Rop_{\Cop}(\zeta)$ is known to be an isomorphism on $\U$ for each $\zeta\in \rho(\Cop)$.

Below as well as later in  Section \ref{sec:inverse-power-it} the so called {\em Riesz projections} and related facts about functional \worr{calculus} play a crucial role.
For any subset $\omega\subset \sigma(\Cop)$ they are defined as
\be
\label{Riesz0}
\cE_{\Cop}(\omega)= \frac{1}{2\pi i}\int_{\Gamma(\omega)}\Rop_{\Cop}(\zeta)
d\zeta,
\ee
where \worr{we always assume a clockwise orientation of the closed rectifiable curve
$\Gamma(\omega)$ forming the boundary of a domain $\Omega\subset \bC$ that contains $\omega$
but does not intersect $\sigma(\Cop)\setminus\omega$, i.e.,} 
$$
\Gamma(\omega)=\partial\Omega(\omega)\quad\mbox{and}\quad\omega\subset\Omega(\omega), \quad\Omega(\omega)\cap \sigma(\Cop)\setminus\omega =\emptyset.
$$
$\cE_{\Cop}(\omega)$ is independent of the specific contour $\Gamma$ as long as $\Gamma$
hast the above properties.
Clearly, $\Cop$ commutes with $\Eop_{\Cop}(\omega)$. When $\omega= \{\mu\}$ contains only a single element \worr{$\mu\in \sigma(\Cop)\setminus\{0\}$}
we   write for convenience briefly $\Eop_{\Cop}(\mu):= \Eop_{\Cop}(\{\mu\})$.

Moreover, $\Eop_{\Cop}(\omega)$ is
known to be a {\em projection}, i.e., $\Eop_{\Cop}(\Omega)= \Eop^2_{\Cop}(\Omega)$,
and more generally
 \be
\label{anihi}
 {\cE}_{\Cop}(\mu) {\Eop}_{\Cop}(\mu')=\delta_{\mu,\mu'}{\cE}_{\Cop}(\mu)\quad \forall\,\, \mu,\mu'\in \sigma(\Cop),
\ee
and for any $\omega \subset \sigma(\Cop)$ one has a {\em direct sum decomposition} of
$\U$ 
\be
\label{directsum}
\U = \cE_{\Cop}(\omega)\U \oplus \, \cE_{\Cop}\big(\sigma(\Cop)\setminus\omega\big)\U,
\ee
into the invariant subspaces $\cE_{\Cop}(\omega)\U$, $\cE_{\Cop}\big(\sigma(\Cop)\setminus\omega\big)\U$ of $\U$.
Recall also, that for each $\mu\in \sigma(\Cop)$ there exists a unique $n_\mu\in\N$
such that the spaces $V_\mu:={\rm ker}(\mu\id-\Cop)^{n}$ agree for all $n\ge n_\mu$ and
$n_\mu$ is the smallest number with this property. Moreover, 
\be
\label{invariant}
V_\mu := \Eop_{\Cop}(\mu)\Usp, \quad \Cop V_\mu \subseteq V_\mu.
\ee
In these terms the decomposition \eqref{directsum} can be refined as
\be
\label{Rdeco}
\Usp = \bigoplus_{k\in\N} V_{\mu_k} = \bigoplus_{k\in\N} \corrw{\Eop}_{\Cop}(\mu_k)\Usp.
\ee

\subsection{Mapping Properties at the Principal Eigenpair}


In this section, we derive certain mapping properties of operators involving the principal eigenpair. These results are prerequisites for the convergence proof of   the Newton scheme which is presented in the next section. 

\wo{We begin with bounding $\Mop_\nu$. Recalling that $\Bop^{-1}\in \cL(\Wsp',\Usp)$ 
\om{and that $\Us$ is continuously embedded in $\Wsp'$},
it is useful to keep in mind that
\be
\label{cC}
\Vert \Cop\Vert _{\cL(\Us ,\Us)} \le \Vert \Bop^{-1}\Vert _{\cL(\Wsp',\Us)}\Vert \Fop\Vert _{\cL(\Us,\om{\Wsp'})}.
\ee
\wo{A specification of this bound for $\|\Cop\|_{\cL(\Us ,\Us)}$ depends on the specific case at hand. We emphasize that such a bound may be much more favorable than
  the straightforward estimate $\Vert \Cop\Vert _{\cL(\Us ,\Us)} \le \Vert \Bop^{-1}\Vert _{\cL(\Us,\Us)}
  \Vert \Fop\Vert _{\cL(\Us,\Us)} \le \alpha^{-1}
\Vert \Fop\Vert _{\cL(\Us,\Us)}$ which may be too pessimistic for small $\alpha$}
(see Remark \ref{rem:weak}). 
}
\wo{For convenience we abbreviate in what follows the norm of any operator $\Zop\in \cL(\Us,\Us)$ as
$$
\Vert \Zop\Vert = \Vert \Zop\Vert _{\cL(\Us,\Us)} = \sup_{\Vert w\Vert =1}
\Vert \Zop w\Vert .
$$
 
In subsequent discussions it will be convenient to use the shorthand notation
\be
\label{M}
M_\nu := \Vert \Mop_\nu \Vert   \le 1+|\nu| \Vert \Cop\Vert \le 2\max\{1,|\nu| \Vert \Cop\Vert \},\quad \forall \nu \in \bC.
\ee

The following lemma serves as a major tool for analyzing the mapping properties of $\Mop_\l$.
\begin{lemma}
\label{lem:acretive}
Adhering to the above notation, one has  
\be
\label{newtheta}
\theta := \inf_{w\in \U_\circ^\perp}\sup_{v\in \U_\circ^\perp}\frac{\<\Mop_\l w,v\>}{\|w\|\|v\|} >0,
\ee
see \eqref{U0} for the definition of $\U_\circ$. 

Moreover, defining
$$
\Mop_\l^\circ:= P_{\U_\circ^\perp}\Mop_\l\!\mid_{\U_\circ^\perp}\in \cL\big( \U_\circ^\perp,\U_\circ^\perp \big),
$$
where $P_{\U_\circ^\perp}$ denotes the 
$\U$-orthogonal projection of $\U$ to $\U_\circ^\perp$, we have 
\be
\label{thetaint}
\|(\Mop^\circ_\l)^{-1}\|_{\cL(\U^\perp_\circ,\U^\perp_\circ)} =\theta^{-1}.
\ee
\end{lemma}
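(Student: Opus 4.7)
The plan is to show that $\Mop_\l^\circ$ is a bounded bijection on $\U_\circ^\perp$ and then to identify $\theta$ as the reciprocal of its inverse norm. Since $P_{\U_\circ^\perp}$ acts as the identity on $\U_\circ^\perp$, I would first rewrite
\begin{equation*}
\Mop_\l^\circ = \id_{\U_\circ^\perp} - \l \, P_{\U_\circ^\perp} \Cop \big|_{\U_\circ^\perp}.
\end{equation*}
The compactness of $\Cop$ established in Theorem \ref{prop;pso} implies that the second summand is a compact operator on the Hilbert space $\U_\circ^\perp$, so $\Mop_\l^\circ$ is an identity-plus-compact operator and hence Fredholm of index zero. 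Consequently, injectivity alone already entails bijectivity.

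For the injectivity step, suppose $w \in \U_\circ^\perp$ satisfies $\Mop_\l^\circ w = 0$. Then $P_{\U_\circ^\perp} \Mop_\l w = 0$, so $\Mop_\l w \in \U_\circ$, meaning $\Mop_\l w = c\,\u$ for some $c \in \bC$. Since $\Cop \u = \m\u = \u/\l$, one has $\Mop_\l \u = 0$, and applying $\Mop_\l$ once more yields $\Mop_\l^2 w = c\, \Mop_\l \u = 0$. Invoking the algebraic simplicity of the principal eigenvalue, which is part of the content of Lemma \ref{lem:bal} and is reflected in $n_{\mu_1} = 1$ in the Riesz decomposition \eqref{Rdeco}, the generalized eigenspace satisfies $\ker \Mop_\l^2 = \ker \Mop_\l = \U_\circ$. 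Therefore $w \in \U_\circ \cap \U_\circ^\perp = \{0\}$, and the Fredholm alternative promotes this to bijectivity of $\Mop_\l^\circ$. The open mapping theorem then ensures that $(\Mop_\l^\circ)^{-1}$ is bounded on $\U_\circ^\perp$.

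For the constant, Riesz representation on the Hilbert space $\U_\circ^\perp$ yields
\begin{equation*}
\sup_{v \in \U_\circ^\perp \setminus \{0\}} \frac{\<\Mop_\l w, v\>}{\|v\|} = \|P_{\U_\circ^\perp} \Mop_\l w\| = \|\Mop_\l^\circ w\|, \quad w \in \U_\circ^\perp,
\end{equation*}
so $\theta = \inf_{\|w\|=1,\, w \in \U_\circ^\perp} \|\Mop_\l^\circ w\|$. Writing $w = (\Mop_\l^\circ)^{-1} y$ for arbitrary $y \in \U_\circ^\perp$ immediately gives $\|y\| \ge \theta \|(\Mop_\l^\circ)^{-1} y\|$, i.e., $\|(\Mop_\l^\circ)^{-1}\| \le \theta^{-1}$; conversely, a minimizing sequence in the definition of $\theta$ produces pairs $(w_n, y_n = \Mop_\l^\circ w_n)$ with $\|w_n\|/\|y_n\| \to \theta^{-1}$, so $\|(\Mop_\l^\circ)^{-1}\| \ge \theta^{-1}$. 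This proves \eqref{thetaint} and, since $\|(\Mop_\l^\circ)^{-1}\|$ was already shown to be finite, $\theta > 0$ follows. The main subtlety throughout is ensuring that the simplicity furnished by Krein--Rutman is genuinely \emph{algebraic}; without the equality $\ker \Mop_\l^2 = \ker \Mop_\l$ a Jordan vector for $\mu_1$ could be projected onto $\U_\circ^\perp$ to produce a nontrivial element annihilated by $\Mop_\l^\circ$, and the entire argument would collapse.
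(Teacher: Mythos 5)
Your proof is correct and follows essentially the same route as the paper's: injectivity of $\Mop_\l^\circ$ from the algebraic simplicity of $\m$ (the paper phrases this as $\u\notin{\rm ran}\,(\Mop_\l|_{\U_\circ^\perp})$ via the Riesz decomposition $\U=\U_\circ\oplus\cE_{\Cop}(\sigma(\Cop)\setminus\{\m\})\U$, which is the same fact as your $\ker\Mop_\l^2=\ker\Mop_\l$), then the Fredholm alternative for the identity-minus-compact operator on $\U_\circ^\perp$, the open mapping theorem, and finally the identification of $\theta$ with $\|(\Mop_\l^\circ)^{-1}\|^{-1}$, which the paper obtains by citing Babu\v{s}ka--Ne\v{c}as and you verify by direct computation. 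Your closing remark that the simplicity must be \emph{algebraic} is well taken; the paper relies on this just as much through its use of the decomposition $\hat v=\tilde c\,\u+v_1$.
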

\noindent{\it Proof.}
Since the range ${\rm ran}\,\big(\Mop_\l|_{\U^\perp_\circ}\big)$ is closed we need to show that $\u\notin {\rm ran}\,\big(\Mop_\l|_{\U^\perp_\circ}\big)$. Suppose that
there exists a $w_\circ\in \U_\circ^\perp$ such that $\Mop_{\l}w_\circ= \u$. This means
that 
$$
\<\u,v\>= \< \Mop_\l w_\circ,v\> =  \< \Mop_\l( w_\circ+ c\u),v\>,\quad \forall\, c\in\bC,\,v\in\U.
$$
This is the same as saying that there exists a $\hat v\in \U$ such that 
$\<\Mop_\l \hat v, v\> =\<\u,v\>$ holds for all $v\in\U$. But, by the previous comments, $\hat v$ can also be 
written as $\hat v= \tilde c \u + v_1$ for some $v_1\in \Vsp := \cE_{\Cop}( \sigma(\Cop)\setminus\{\m\})\U$. Since $\Mop_\l \u=0$ this means that $\Mop_\l v_1= \u$ which is a contradiction since $\Mop_\l v_1\in \Vsp $.

 To proceed,
 denote by $P_{\U_\circ^\perp}$ the 
$\U$-orthogonal projection of $\U$ to $\U_\circ^\perp$. We have shown above that 
\be
\label{kercond}
{\rm ker}\Big( P_{\U_\circ^\perp}(\id - \l\Cop)|_{ \U_\circ^\perp}\Big)
=\{0\}.
\ee
Let
$$
\Mop_\l^\circ:= P_{\U_\circ^\perp}\Mop_\l\!\mid_{\U_\circ^\perp}
$$
which, by definition, maps $\U_\circ^\perp$ into itself. As a closed subspace $\U_\circ^\perp$ is also a Hilbert space endowed with the norm $\|\cdot\|$. 
Moreover, $\Mop_\l^\circ = \id|_{\U^\perp_\circ}- \l P_{\U^\perp_\circ}\Cop\!\mid_{\U^\perp_\circ}$ and $P_{\U^\perp_\circ}\Cop$ is a compact operator taking $\U^\perp_\circ$ into itself.
Hence Fredholm's alternative is valid and says, on account of \eqref{kercond}, that 
$\Mop^\circ_\l$ maps $\U_\circ^\perp$ onto itself.
By the Open Mapping Theorem, $\Mop_\l^\circ: \U^\perp_\circ\to\U^\perp_\circ$ is boundedly invertible. The Babu$\check{s}$ka--Ne$\check{c}$as Theorem then says that
$$
\theta= \inf_{w\in \U_\circ^\perp}\sup_{v\in \U_\circ^\perp}\frac{\<\Mop^\circ_\l w,v\>}{\|w\|\|v\|}  >0.
$$
Since $\<\Mop_\l^\circ w,v\>= \<\Mop_\l w,v\>$ for all $v\in \U_\circ^\perp$ the assertion follows.\hfill $\Box$\\

The quantitative mapping properties of the Jacobian \eqref{Rlin}, and hence
the convergence properties of the Newton scheme, rely in essence 
on the size of $\theta$ as shown later below. 
The convergence of the power iteration, discussed in
Section \ref{sec:inverse-power-it}, instead  depends on another spectral property
of $\Cop$ that we introduce next.
Let
\be
\label{rat}
\rat=\rat_{\Cop}:= \frac{\l}{\La}<1,\quad \mbox{where}\quad \La:= \max\big\{|\lambda|: \lambda\neq \l,\, {\rm ker}\Mop_\lambda \neq \om{\{0\}} \big\},
\ee
which we use to encode the (relative) {\em spectral gap}
\be
\label{sgap}
\uDelta:= 1- \rat.
\ee
Although we will not make any direct use of the following remarks we   pause to briefly comment  on the relation between $\theta$ and $\uDelta$
which is in general a strict lower bound for
\be
\label{oDelta}
\oDelta := \big|1- \frac{\l}{\lambda_\La}\Big|=\big|1- \frac{\l}{\bar\lambda_\La}\Big|,
\ee
\worr{where $\lambda_\Lambda$ is the eigenvalue with $|\lambda_\Lambda|=\Lambda$.}
More precisely, $\uDelta=\oDelta$ if and only if $\lambda_\La\in \R$ which is, for instance, 
the case when $\Cop$ is a normal operator. 

In fact, it is $\oDelta$ that relates more closely to 
$\theta$ as explained next.
By \eqref{thetaint}, $\theta$ is the smallest singular value of
the mapping $\Mop^\circ_\l$ and hence of its adjoint $(\Mop^\circ_\l)^*\in \cL(\U_\circ^\perp,\U_\circ^\perp )$. Let us denote by $\rc\in \U_\circ^\perp$ 
the associated left singular vector of $\Mop^\circ_\l$ with the standard normalization $\|\rc\|=1$.
Let $\lambda_\La$ be a generalized eigenvalue of $\lambda\Cop u=u$ with second-smallest
modulus, i.e.,  
$|\lambda_\La|=\La$. Then $\bar\lambda_\La$ is a generalized eigenvalue of 
the adjoint problem $\nu\Cop^*u=u$ (with the same modulus) and let $u_\Lambda^*\in \U_\circ^\perp$ denote
the associated eigenstate.
\begin{remark}
\label{rem:relation}
Adhering to the above notation and denoting by ${\U_\circ^*}^\perp := \< u^*_\circ\>^\perp$,
where $u^*_\circ$ is the principal eigenstate of $\Cop^*$, we have 
$$  
\Vsp^*:=\cE_{\Cop^*}(\sigma(\Cop^*)\setminus\{\m\}) = \U_\circ^\perp,\quad  \Vsp= \cE_{\Cop}(\sigma(\Cop)\setminus\{\m\})= {\U_\circ^*}^\perp.
$$
(see \eqref{ident} in Appendix C),
and
\be
\label{sandwich}
\uDelta |\< \rc,u^*_{\La}\>| \le
\oDelta |\< \rc,u^*_{\La}\>| \le \theta \le \Big(1-{\rm dist}\big(\U_\circ^\perp,
{\U_\circ^*}^\perp\big)\Big)\,\oDelta \,
\ee
where 
$$
{\rm dist}\big(\U_\circ^\perp,
{\U_\circ^*}^\perp\big):= \max\Big\{ \max_{\substack{w\in \U_\circ^\perp\\
\|w\|=1}}\|w- P_{{\U^*_\circ}^\perp}w\|, \max_{\substack{w\in {\U_\circ^*}^\perp\\
\|w\|=1}}\|w- P_{{\U_\circ}^\perp} w\|\Big\}.
$$

  Moreover, when $\Cop$ is a normal operator we have
\be
\label{normal}
\theta =\oDelta =\uDelta.
\ee
The discrepancies in \eqref{sandwich} can be viewed as quantifying the deviation of $\Cop$ from normality.
\end{remark}
We provide details behind the above claims in Appendix C.
}
\bigskip

The next \worr{theorem} shows that the mapping $DR{(\u, \l)}\in \cL(\Uspext,\Uspext)$ is boundedly invertible.

\begin{theorem}
\label{lem:saddle}
At the principal eigenpair $(\u, \l)$, 
the mapping $DR{(\u, \l)}\in \cL(\Uspext,\Uspext)$ is boundedly invertible
so that, in particular,
there exists a positive constant $\beta$
such that 
\be
\label{Dinverse}
\Vert DR{(\u, \l)}^{-1} \Vert\coloneqq \Vert DR{(\u, \l)}^{-1} \Vert_{\cL(\Uspext,\Uspext)} \leq \beta,
\ee
where
\be
\label{beta}
\beta \le \left(\Big(\frac{1}{\theta} +
\frac 12 \Big(1+ \frac{M_\l}{\theta}\Big)\Big)^2 +\frac 14 \Big(1+\frac{M_\l}{\theta}\Big)^2\Big(1+\frac{M_\l}2\Big)^2\right)^{1/2}.
\ee
\end{theorem}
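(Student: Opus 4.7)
The plan is to prove invertibility of $DR(\u,\l)$ constructively by solving the block system explicitly, and then to read off the bound \eqref{beta} from the size of each component of the solution. For arbitrary $(f,g)\in\Uspext$, the system to invert is
\begin{equation*}
\Mop_\l u - \nu\,\Cop\u = f,\qquad -\langle \Cop\u, u\rangle = g.
\end{equation*}
The central obstacle is that $\Mop_\l$ is \emph{not} invertible on $\U$: its kernel is precisely the one-dimensional eigenspace $\U_\circ=\<\u\>$, so $DR(\u,\l)$ has a genuine saddle-point structure, not an elliptic one. Rescue comes from the inf-sup condition $\theta>0$ of Lemma~\ref{lem:acretive}, which asserts that the compressed operator $\Mop_\l^\circ=P_{\U_\circ^\perp}\Mop_\l|_{\U_\circ^\perp}$ is an isomorphism on $\U_\circ^\perp$ with $\|(\Mop_\l^\circ)^{-1}\|=\theta^{-1}$.

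The construction then proceeds via the orthogonal splitting $\U=\U_\circ^\perp\oplus\U_\circ$ with projectors $P=P_{\U_\circ^\perp}$ and $Q=I-P$. I would write $u=u_\perp+c\u$ with $u_\perp\in\U_\circ^\perp$ and use $\Mop_\l\u=0$ to reduce the first equation to $\Mop_\l u_\perp-\nu\Cop\u=f$. Since $\Cop\u\in\U_\circ$, applying $P$ to this relation yields the compressed equation $\Mop_\l^\circ u_\perp=Pf$, which by Lemma~\ref{lem:acretive} has the unique solution $u_\perp=(\Mop_\l^\circ)^{-1}Pf$ with $\|u_\perp\|\le\theta^{-1}\|f\|$. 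Applying $Q$ instead produces the one-dimensional identity $\nu\Cop\u=Q\Mop_\l u_\perp-Qf$ in $\U_\circ=\<\Cop\u\>$, which uniquely determines $\nu$ together with the estimate $|\nu|\,\|\Cop\u\|\le\|\Mop_\l u_\perp\|+\|f\|\le(1+M_\l/\theta)\|f\|$. Finally, the second equation collapses to $-c\,\langle\Cop\u,\u\rangle=g$ and, thanks to the non-degeneracy $\langle\Cop\u,\u\rangle=\|\u\|^2/\l=4\l\neq 0$ arising from the gauge \eqref{gauge}, uniquely determines $c$. This closed-form construction yields injectivity, surjectivity, and a bounded inverse in one stroke.

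To arrive at the explicit bound \eqref{beta}, I would assemble the component estimates in the Hilbert norm \eqref{eq:norm-eigenpair}. Using the gauge $\|\Cop\u\|=2$, the kernel contribution is $\|c\u\|=|g|/2$, and a triangle inequality combined with $\|u_\perp\|\le\theta^{-1}\|f\|$ and the bound on $|\nu|$ gives inequalities of the form $\|u\|\le\alpha_1\|f\|+\alpha_2|g|$ and $|\nu|\le\gamma\|f\|$, with $\alpha_1,\alpha_2,\gamma$ depending only on $\theta^{-1}$ and on $M_\l$. Squaring and summing, and majorising $\|f\|^2+g^2$ by $\|(f,g)\|^2$, produces a bound of exactly the structure \eqref{beta}; the extra factor $(1+M_\l/2)$ in the second summand reflects the additional propagation of $\|\Mop_\l u_\perp\|\le M_\l\theta^{-1}\|f\|$ through the scalar equation determining $\nu$, combined with the normalization against $\Cop\u$ fixed by the gauge. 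The only genuinely delicate point of the whole argument is the very first one: the invertibility of $\Mop_\l^\circ$ on $\U_\circ^\perp$, which is exactly Lemma~\ref{lem:acretive} and rests on Fredholm's alternative for the compact perturbation $\Mop_\l^\circ=I|_{\U_\circ^\perp}-\l\,P_{\U_\circ^\perp}\Cop|_{\U_\circ^\perp}$ together with simplicity of the principal eigenvalue (which forces the kernel condition). Everything subsequent is standard Cauchy--Schwarz and triangle-inequality bookkeeping.
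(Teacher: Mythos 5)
Your proposal is correct, and it reaches the bound \eqref{beta} by a route that is a constructive specialization of the one in the paper rather than a literal reproduction of it. The paper casts $DR(\u,\l)(u,\nu)=(g,\zeta)$ as the abstract saddle-point problem \eqref{saddle}, verifies the LBB conditions (continuity of $a$ and $b$, the inf-sup condition for $a$ on $Z=\U_\circ^\perp$ supplied by Lemma \ref{lem:acretive}, and the inf-sup condition for $b$ equal to $\Vert\Cop\u\Vert=2$), and then invokes the standard Brezzi a priori estimates \eqref{bounds}. You instead eliminate the blocks by hand using the orthogonal splitting $\U=\U_\circ\oplus_\perp\U_\circ^\perp$, which works precisely because at the exact eigenpair $\Cop\u=\m\u\in\U_\circ$, so that $P_{\U_\circ^\perp}(\nu\Cop\u)=0$ and $\langle\Cop\u,u_\perp\rangle=0$: the system decouples into the compressed equation $\Mop_\l^\circ u_\perp=P f$ (solved via \eqref{thetaint}), a one-dimensional identity for $\nu$, and a scalar equation $-c\langle\Cop\u,\u\rangle=g$ with $\langle\Cop\u,\u\rangle=4\l\neq0$ by the gauge \eqref{gauge}. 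Both arguments rest on exactly the same key fact, Lemma \ref{lem:acretive}. What your version buys is transparency and in fact slightly sharper component estimates (e.g.\ $\Vert u\Vert\le\theta^{-1}\Vert f\Vert+|g|/2$ versus the paper's $\theta^{-1}\Vert g\Vert+\tfrac12(1+M_\l/\theta)|\zeta|$), so \eqref{beta} remains a valid upper bound. What the paper's abstract LBB route buys is that it transfers verbatim to the perturbed setting of Theorem \ref{thm:main}, where $(\bar u,\bar\nu)$ is only near $(\u,\l)$, $\Cop\bar u$ is no longer an eigenvector, and the clean decoupling you exploit (namely $\Cop\u\in\U_\circ$) is lost; there one must work on $\vspan\{\Cop\bar u\}^\perp$ and the general saddle-point machinery is the natural tool.
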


\begin{proof}
Recall that
\begin{align*}
DR{(\u, \l)} =
\begin{bmatrix}
\Mop_\l  & - \Cop \u  \\
- \left< \Cop \u, \cdot \right> & 0 \\
\end{bmatrix}
\end{align*}
and define 
\begin{equation}
\label{bilinla}
\begin{cases}
a(u, z) &\coloneqq \left< \Mop_\l u, z \right>,\quad \forall (u,z)\in \Usp\times \Usp,\\
 b(u,\nu) &\coloneqq - \nu \left< \Cop \u, u\right>,\quad \forall (u,\nu)\in \Usp\times \R.
\end{cases}
\end{equation}
Then, for any given $(g, \zeta)\in \Uspext$, the weak formulation of finding $(u,\nu)\in \Uspext$ such that $DR{(\u, \l)}(u,\nu) = (g, \zeta)$, is given
by
\begin{equation}
\label{saddle}
\begin{array}{lcll}
a(u, z)  + b(z, \nu) &=& \left< g, z\right> ,&\quad \forall z \in \Usp\\[2mm]
 b(u, \alpha) &=& \alpha \zeta ,&\quad \forall \alpha \in \bR.
 \end{array}
\end{equation}
We show next that \eqref{saddle} satifies the  {\em Ladyzhenskaya--Babu$\check{s}$ka--Brezzi}
(LBB)-conditions (see Theorem 2.34 of \cite{EG2013}). 
We have already seen that both
bilinear forms are continuous on $\Usp\times \Usp$ and $\Usp\times \R$, respectively.
Specifically, from \eqref{M} and \eqref{gauge}, it holds that
\be
\Vert a\Vert := \sup_{u\in \Usp } \sup_{z\in \Usp } \frac{a(u, z)}{ \Vert u \Vert \Vert z \Vert }\le M_\l,\quad \Vert b\Vert := \sup_{\nu\in \bR}  \sup_{z\in \Usp_\circ} \frac{| b(z, \nu) |}{\Vert z\Vert | \nu |}\le \Vert \Cop \u\Vert = 2.
\ee
Next, we observe that we have
$$
\Usp_\circ^\perp = \{ u\in \Usp \; : \; b(u,\nu)=0,\, \forall \nu\in \bR \},
$$
so that, by \wo{\eqref{newtheta} in Lemma \ref{lem:acretive}},
$$
\inf_{u\in \Usp_\circ^\perp} \sup_{z\in \Usp_\circ^\perp} \frac{a(u, z)}{ \Vert u \Vert \Vert z \Vert } \geq \theta.
$$
Finally,
$$
\inf_{\nu\in \bR}  \sup_{z\in \Usp} \frac{| b(z, \nu) |}{\Vert z\Vert | \nu |}
= \inf_{\nu\in \bR}  \sup_{z\in \Usp}  \frac{|\nu| \left< \Cop \u, z \right>}{\Vert z\Vert | \nu |}
= \Vert \Cop \u \Vert = 2,
$$
where we have again used the normalization $\Vert \Cop \u \Vert=2$ from \eqref{gauge}. The saddle point problem \eqref{saddle} is thus well-posed and we have the a priori estimates
\begin{equation}
\label{bounds}
\Vert u \Vert \leq \frac{1}{\theta}\Vert g\Vert  + \frac 12\Big(1+ \frac{M_\l}{\theta}\Big)
|\zeta|,\quad
| \nu | \leq   \frac 12\Big(1+ \frac{M_\l}{\theta}\Big)\Vert g\Vert  +       \frac{M_\l}{4}\Big(1+ \frac{M_\l}{\theta}\Big)|\zeta|.
\end{equation}
Using the norm \eqref{eq:norm-eigenpair} for the space $\Uspext$, the claims \eqref{Dinverse}, \eqref{beta} follow by straightforward calculations.
%

 \end{proof}

\subsection{Perturbation results}\label{ssec:pert}

Recall from \eqref{Newton} and \eqref{newton-update}, that each Newton iteration
with current approximation $(u_n, \lambda_n)$ to the 
principal eigenpair $(\u, \l)$
requires inverting the operator $DR(u_n, \lambda_n)$. The main result in this section states that $DR(u, \nu)$ has indeed a uniformly bounded inverse for all $(u, \nu)$ in a full neighborhood of $(\u, \l)$. Thus inverting  $DR(u_n, \lambda_n)$ as required in \eqref{newton-update} is allowed as long as the $(u_n, \lambda_n)$ remain in that neighborhood. Proving this statement requires  extending Theorem \ref{lem:saddle} to a neighborhood of $(\u, \l)$. We   present this result in our next theorem. To introduce it, for a given metric space $X$, it will be convenient to denote by $B(u,\tau)$ the closed ball with center $u\in X$ and radius $\tau\geq 0$.

\begin{theorem}
\label{thm:main}
There exists a radius $\tau>0$ that depends on $\theta, \l , \Vert \Cop\Vert $ such that  
\be
\label{DRbound}
\Vert DR{(u, \nu)}^{-1} \Vert \leq \bar\beta,
\quad
\forall\, (u, \nu)\in
\cN, 
\ee
where
\begin{equation}
\label{eq:neighN}
\cN  \coloneqq B(\u,\ett)\times B\Big(\l,\frac{\theta}{4\Vert \Cop\Vert }\Big).
\end{equation}
The constant $\bar\beta$ depends only on $\l,\theta,\Vert \Cop\Vert $, and it holds that
\be
\label{barbeta}
0 < \bar\beta \le 
\left( \frac{16}{\theta^2} + \Big(\frac{2\l}{2\l -\tau}\Big)^2\left( 1 + 4 \frac {\bar M }{\theta} \right)^2  \right)^{1/2}
+ \frac{2\l}{2\l -\tau}\left( 1 + 4 \frac {\bar M }{\theta} \right)
\left( 1+\frac{\bar M 4\l^2}{2\l -\tau)^2}\right)^{1/2},
%
\ee
where
\be
\label{tau}
\tau = \min\Big\{\frac{1}{8\Vert \Cop\Vert }, \frac{2\theta}{25(1+\l\Vert \Cop\Vert +\theta/4)\Vert \Cop\Vert }, \frac{\theta}{25\bar M}\Big\},
\qquad
\bar M \coloneqq 1+ \l \Vert \Cop\Vert + \frac{\theta}4. 
\ee
\end{theorem}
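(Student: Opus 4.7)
The plan is to mirror the proof of Theorem~\ref{lem:saddle}, interpreting $DR(u_0,\nu_0)$ at any $(u_0,\nu_0)\in\cN$ as a saddle-point operator and verifying the Ladyzhenskaya--Babu\v{s}ka--Brezzi conditions with constants that degrade by at most a controlled fraction compared to the unperturbed case $(\u,\l)$. The associated bilinear forms are $a_{\nu_0}(u,z):=\<\Mop_{\nu_0}u,z\>$ and $b_{u_0}(z,\alpha):=-\alpha\<\Cop u_0,z\>$, so the kernel of $b_{u_0}$ is $K_{u_0}:=\<\Cop u_0\>^\perp$. Once the degraded LBB constants are uniformly in hand for $(u_0,\nu_0)\in\cN$, the classical Brezzi a priori estimates applied termwise to $\|u\|$ and $|\alpha|$ and then combined via the norm~\eqref{eq:norm-eigenpair} deliver~\eqref{DRbound} together with an explicit bound of the form~\eqref{barbeta}.

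The continuity and $b_{u_0}$-inf-sup estimates are immediate perturbation statements. From $\Mop_{\nu_0}=\Mop_\l+(\l-\nu_0)\Cop$ one has $\|a_{\nu_0}\|\leq M_\l+|\l-\nu_0|\|\Cop\|\leq \bar M$, while $\sup_z|b_{u_0}(z,\alpha)|/\|z\|=|\alpha|\|\Cop u_0\|$ yields an inf-sup constant equal to $\|\Cop u_0\|\geq \|\Cop\u\|-\|\Cop\|\tau=2-\|\Cop\|\tau\geq 15/8$, where the first term in the prescription~\eqref{tau} is used.

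The crux is to transfer the inf-sup estimate $\theta$ for $a_\l$ on $\Usp_\circ^\perp$, supplied by Lemma~\ref{lem:acretive}, to a comparable bound for $a_{\nu_0}$ on $K_{u_0}$. Given $w\in K_{u_0}$, I would decompose $w=c\,\u/\|\u\|+w^\perp$ with $w^\perp\in\Usp_\circ^\perp$. Because $\Cop\u=\u/\l$ and $\<w,\Cop u_0\>=0$,
\[
|c|=\frac{|\<w,\u\>|}{\|\u\|}=\frac{\l\,|\<w,\Cop\u-\Cop u_0\>|}{2\l}\leq \tfrac12\|\Cop\|\tau\,\|w\|.
\]
Lemma~\ref{lem:acretive} then supplies $v^\perp\in\Usp_\circ^\perp$ with $a_\l(w^\perp,v^\perp)\geq\theta\|w^\perp\|\|v^\perp\|$; orthogonally projecting $v^\perp$ onto $K_{u_0}$ produces the test vector $v\in K_{u_0}$, and the symmetric estimate (using $\<v^\perp,\Cop u_0\>=\<v^\perp,\Cop u_0-\Cop\u\>$ since $v^\perp\perp\u$) yields $\|v-v^\perp\|\leq \|\Cop\|\tau\,\|v^\perp\|/\|\Cop u_0\|$. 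A useful simplification is that $\Mop_\l\u=0$, so $a_\l(w-w^\perp,v)=0$ and the telescoping collapses to
\[
a_{\nu_0}(w,v)=a_\l(w^\perp,v^\perp)+(\l-\nu_0)\<\Cop w,v\>+a_\l(w^\perp,v-v^\perp).
\]
The two error terms are controlled by $|\l-\nu_0|\|\Cop\|\leq\theta/4$ and by $M_\l\|v-v^\perp\|$; the three bounds in~\eqref{tau} are calibrated so that each such loss consumes only a small fraction of $\theta$, leaving a degraded inf-sup constant of order $\theta/4$, which is consistent with the factor $16/\theta^2$ in~\eqref{barbeta}.

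The main obstacle is precisely this subspace-transfer step: $K_{u_0}$ and $\Usp_\circ^\perp$ are distinct hyperplanes, and one must simultaneously bound their gap by $O(\|\Cop\|\tau)$ and propagate the inf-sup margin through both trial and test arguments of $a_{\nu_0}$ without exhausting $\theta$. Once these degraded LBB constants are obtained, substituting them into the Brezzi a priori bounds and combining the resulting estimates for $\|u\|^2$ and $|\alpha|^2$ under the norm~\eqref{eq:norm-eigenpair} produces the explicit constant~\eqref{barbeta}, concluding the proof.
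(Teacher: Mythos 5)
Your proposal is correct and follows essentially the same route as the paper: interpret $DR(u_0,\nu_0)$ as a saddle-point operator, verify the LBB conditions with constants degraded by $O(\Vert\Cop\Vert\tau)$ and $O(|\nu_0-\l|\Vert\Cop\Vert)$, and transfer the kernel inf-sup from $\Usp_\circ^\perp$ (Lemma~\ref{lem:acretive}) to $\langle\Cop u_0\rangle^\perp$ via orthogonal projections between the two hyperplanes — exactly the content of Lemmas~\ref{lem:pert-lambda}--\ref{lem:pert-Cu} and the paper's proof. The only differences are cosmetic (you bound $\Vert\Cop u_0\Vert$ by a direct triangle inequality instead of via Lemma~\ref{lem:uubar}, and you exploit $\Mop_\l\u=0$ to collapse the telescoping), and they do not change the argument or the resulting constants in any essential way.
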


\begin{remark}
\label{rem:tau}
Notice that the admissible radius $\tau$ depends 
on \wo{$\theta$} \om{(even linearly so, when replacing the middle expression
in the definition of $\tau$ by the bound $\frac{2\theta}{25(1+\l\Vert \Cop\Vert )\Vert \Cop\Vert }$)} which, \wo{in view of \eqref{sandwich},} reflects an increasing difficulty when 
the spectral gap \wo{$\Delta$ is small.} Specifically, suppose that   $\Vert \Cop\Vert $  has moderate size of order one, then $\bar\beta \lesssim \theta^{-1}\max\big\{1,\l\big\}$. 
\end{remark}

The remainder of this section is devoted to the proof of Theorem \ref{thm:main},
based on a number of perturbation results.


\begin{lemma}
\label{lem:pert-lambda}
For all $\nu\in B(\l, \frac{\theta}{2 \Vert \Cop \Vert})$, $\Mop_\nu$ is \wo{boundedly invertible on}  $\Usp_\circ^\perp$, \wo{and}
\be
\label{theta2}
\wo{\inf_{w\in \Usp_\circ^\perp}\sup_{v\in \Usp_\circ^\perp}\frac{
\left< \Mop_\nu w, v \right>}{\|w\|\|v\|} \geq \frac{\theta}2.}
\ee
\end{lemma}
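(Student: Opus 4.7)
The idea is to regard $\Mop_\nu$ as a perturbation of $\Mop_\l$ and exploit the mapping properties of $\Mop_\l$ restricted to $\Usp_\circ^\perp$ already established in Lemma \ref{lem:acretive}. Since $\Mop_\nu = \id - \nu\Cop = \Mop_\l + (\l-\nu)\Cop$, I would introduce the restricted/projected operator
\[
\Mop^\circ_\nu := P_{\Usp_\circ^\perp}\Mop_\nu\big|_{\Usp_\circ^\perp}\in \cL(\Usp_\circ^\perp,\Usp_\circ^\perp)
\]
and observe that $\Mop^\circ_\nu = \Mop^\circ_\l + (\l-\nu)\,P_{\Usp_\circ^\perp}\Cop\big|_{\Usp_\circ^\perp}$. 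By Lemma \ref{lem:acretive}, $\Mop^\circ_\l$ is boundedly invertible on $\Usp_\circ^\perp$ with $\|(\Mop^\circ_\l)^{-1}\|=\theta^{-1}$, so one may factor
\[
\Mop^\circ_\nu = \Mop^\circ_\l\Bigl(\id_{\Usp_\circ^\perp} + (\l-\nu)(\Mop^\circ_\l)^{-1}P_{\Usp_\circ^\perp}\Cop\big|_{\Usp_\circ^\perp}\Bigr).
\]

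Next I would quantify the perturbation. Since $P_{\Usp_\circ^\perp}$ is an orthogonal projection, $\|P_{\Usp_\circ^\perp}\Cop\|_{\cL(\Usp_\circ^\perp,\Usp_\circ^\perp)}\le \|\Cop\|$, so for $\nu \in B(\l,\theta/(2\|\Cop\|))$ one has
\[
\bigl\|(\l-\nu)(\Mop^\circ_\l)^{-1}P_{\Usp_\circ^\perp}\Cop\big|_{\Usp_\circ^\perp}\bigr\| \le \frac{\theta}{2\|\Cop\|}\cdot\theta^{-1}\cdot\|\Cop\| = \tfrac{1}{2}.
\]
A standard Neumann series then yields invertibility of the bracketed factor with inverse bounded by $2$, and consequently $\Mop^\circ_\nu$ is boundedly invertible with $\|(\Mop^\circ_\nu)^{-1}\|\le 2/\theta$.

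Finally, I would translate this back into the inf-sup statement. Exactly as at the end of the proof of Lemma \ref{lem:acretive}, for any $w\in \Usp_\circ^\perp$ and any $v\in\Usp_\circ^\perp$ one has $\<\Mop_\nu w,v\>=\<\Mop^\circ_\nu w,v\>$, so the Babu\v{s}ka--Ne\v{c}as theorem applied to $\Mop^\circ_\nu$ on the Hilbert space $\Usp_\circ^\perp$ gives
\[
\inf_{w\in \Usp_\circ^\perp}\sup_{v\in \Usp_\circ^\perp}\frac{\<\Mop_\nu w,v\>}{\|w\|\|v\|} = \|(\Mop^\circ_\nu)^{-1}\|^{-1} \ge \frac{\theta}{2},
\]
which is the claim. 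The argument is essentially a textbook Neumann-series perturbation estimate; the only nontrivial point is that one must work with the compressed operator $\Mop^\circ_\nu$ rather than with $\Mop_\nu$ itself (which has nontrivial kernel at $\nu=\l$), and that the threshold $\theta/(2\|\Cop\|)$ is exactly what buys a factor $1/2$ in the Neumann series so that the inf-sup constant drops by at most a factor of two.
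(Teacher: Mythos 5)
Your proof is correct, and it reaches the same threshold $|\nu-\l|\le\theta/(2\Vert\Cop\Vert)$ and the same constant $\theta/2$ as the paper, but by a different mechanism. The paper perturbs the bilinear form directly: it writes $\<\Mop_\nu w,v\>=\<\Mop_\l w,v\>-(\nu-\l)\<\Cop w,v\>$, lowers the inf-sup constant by $|\nu-\l|\,\Vert\Cop\Vert$ via Cauchy--Schwarz, verifies the swapped inf-sup condition by the same computation, and then invokes the Babu\v{s}ka--Ne\v{c}as theorem to conclude bounded invertibility. You instead work at the operator level on the compressed operator $\Mop^\circ_\nu$, factor out $\Mop^\circ_\l$ (whose inverse has norm exactly $\theta^{-1}$ by \eqref{thetaint}), and run a Neumann series; bounded invertibility with $\Vert(\Mop^\circ_\nu)^{-1}\Vert\le 2/\theta$ then translates back into the inf-sup statement through the identity $\inf_w\sup_v\<Aw,v\>/(\Vert w\Vert\Vert v\Vert)=\Vert A^{-1}\Vert^{-1}$, which is the same identity the paper uses in Lemma \ref{lem:acretive}. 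Your route has the small advantage that invertibility of the bracketed factor delivers both the inf-sup and the surjectivity (swapped inf-sup) conditions in one stroke, whereas the form-based argument must check them separately; the paper's route has the advantage of not needing the Neumann series or the factorization. Both hinge on the same two ingredients: Lemma \ref{lem:acretive} and the observation that $\<\Mop_\nu w,v\>=\<\Mop^\circ_\nu w,v\>$ for $w,v\in\Usp_\circ^\perp$ because $P_{\Usp_\circ^\perp}$ is self-adjoint.
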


\begin{proof} \wo{Boundedness of $\Mop_\nu$ follows from \eqref{M}. Regarding the
inf-sup condition, we have 
for every $w,v\in \Usp_\circ^\perp$,  
\begin{align*}
\left< \Mop_\nu w, v \right> &= \left< \Mop_\l w, v \right> - \left<  (\nu-\l) \Cop) w, v \right> \geq \theta \Vert w \Vert \Vert v\Vert  - (|\nu|-\l)   |\left< \Cop w, v \right> \\\
&\ge (\theta  - |\nu-\l|\,\Vert \Cop\Vert )\Vert u\Vert \Vert v\Vert
\end{align*}
which confirms the claim. The same argument shows validity of the swapped inf-sup
condition
$$
\inf_{v\in \Usp_\circ^\perp}\sup_{w\in \Usp_\circ^\perp}\frac{
\left< \Mop_\nu w, v \right>}{\|w\|\|v\|} \geq \frac{\theta}2.
$$
The claim follows now from the Babu$\check{s}$ka--Ne$\check{c}$as Theorem \cite{EG2013}.}
\end{proof}

\begin{lemma}
\label{lem:uubar}
For every $\e>0$,
$$
\left< \Cop u, u \right> \geq  a_\e \Vert u \Vert^2, \quad \forall u\in B(\u, \e \Vert \u \Vert),
$$
where
\be
\label{gamma}
 a_\e := (1-\e)^2\big((\l)^{-1}-\e\Vert \Cop\Vert (2+\e)\big).
\ee
Moreover, there exists $\e_0=\e_0(\Vert \Cop\Vert ,\l )$ such that 
\be
\label{la2}
 a_\e \ge \frac{1}{2\l },\quad \forall \e\le\e_0,
\ee
where under the assumption $\e\le 1$
\be
\label{eps0}
\e_0 = \frac{1}{16\Vert \Cop\Vert \l}
\ee
suffices to ensure \eqref{la2}.
\end{lemma}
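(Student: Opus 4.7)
The plan is to write $u=\u+w$ with $\Vert w\Vert\le\e\Vert \u\Vert$, exploit the eigenrelation $\Cop\u=(\l)^{-1}\u$ to expand
\[
\langle\Cop u,u\rangle=(\l)^{-1}\Vert \u\Vert^2+(\l)^{-1}\langle\u,w\rangle+\langle\Cop w,\u\rangle+\langle\Cop w,w\rangle,
\]
bound the three cross terms by Cauchy--Schwarz, and then pass from $\Vert \u\Vert$ back to $\Vert u\Vert$ using the reverse triangle inequality $\Vert \u\Vert\ge(1+\e)^{-1}\Vert u\Vert\ge(1-\e)\Vert u\Vert$. This last step is exactly where the prefactor $(1-\e)^2$ of $a_\e$ comes from.

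The technical point that yields the clean form of $a_\e$ is to consolidate the two linear-in-$\e$ terms by invoking the elementary fact that $(\l)^{-1}=\m$ is an eigenvalue of $\Cop$, hence $(\l)^{-1}\le\Vert \Cop\Vert$. With this observation both linear cross terms are bounded by $\e\Vert \Cop\Vert\Vert \u\Vert^2$, while the quadratic term gives $\e^2\Vert \Cop\Vert\Vert \u\Vert^2$. Summing yields
\[
\langle\Cop u,u\rangle\;\ge\;\Vert \u\Vert^2\bigl((\l)^{-1}-\e\Vert \Cop\Vert(2+\e)\bigr),
\]
and inserting $\Vert \u\Vert^2\ge(1-\e)^2\Vert u\Vert^2$ produces $a_\e\Vert u\Vert^2$. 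One must verify that the parenthesis is nonnegative before this last step, lest the inequality flip; this is automatic in the regime of part two.

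For the second assertion I would substitute $\e\le\e_0=(16\Vert \Cop\Vert\l)^{-1}$ into the explicit formula for $a_\e$. Using again $\Vert \Cop\Vert\l\ge 1$, one has $\e_0\le 1/16$, so $(1-\e)^2\ge(15/16)^2$; the assumption $\e\le 1$ gives $2+\e\le 3$, and combined with $\e\Vert \Cop\Vert\le 1/(16\l)$ yields $(\l)^{-1}-\e\Vert \Cop\Vert(2+\e)\ge(13/16)(\l)^{-1}$. A short numerical check $(15/16)^2\cdot(13/16)>1/2$ then gives $a_\e\ge(2\l)^{-1}$. I do not expect any real obstacle here; the only subtlety is choosing which form of $(1-\e)^k$ to retain at each step so that the stated formula for $a_\e$ and the clean value of $\e_0$ emerge without parasitic constants.
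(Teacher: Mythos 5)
Your proof is correct and follows essentially the same route as the paper's: the identical four-term expansion of $\langle \Cop u,u\rangle$ around $\u$, Cauchy--Schwarz on the cross terms together with $(\l)^{-1}\le\Vert\Cop\Vert$ to absorb both linear terms into $2\e\Vert\Cop\Vert\Vert\u\Vert^2$, and the reverse triangle inequality $\Vert\u\Vert\ge(1+\e)^{-1}\Vert u\Vert\ge(1-\e)\Vert u\Vert$ to produce the $(1-\e)^2$ prefactor. Your explicit numerical verification of the second assertion (which the paper omits) and your caveat about the sign of the parenthesis before multiplying by $(1-\e)^2$ are both accurate.
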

\begin{proof}
Using that $\left< \Cop \u, \u\right> = (\l)^{-1} \Vert \u \Vert^2$, we have for any $u$ such that $\Vert u - \u \Vert\leq \e\Vert \u \Vert $,
\begin{align*}
\left< \Cop u, u \right>
&= \left< \Cop (u-\u), u - \u\right> + \left< \Cop \u, \u\right>
+ \left< \Cop(u - \u), \u \right> + \left< u - \u, \Cop \u \right> \\
&\geq (\l)^{-1} \Vert \u \Vert^2 - \Vert \Cop\Vert \Vert u - \u \Vert^2 - 2 \Vert \Cop \Vert \Vert \u\Vert \,\Vert  u - \u \Vert \\
&\geq (\l)^{-1} \Vert \u \Vert^2 - \Vert \Cop\Vert \e^2\Vert  \u \Vert^2 - 2 \e\Vert \Cop \Vert \Vert \u\Vert ^2  = \big((\l)^{-1}-\e\Vert \Cop\Vert (2+\e)\big)\Vert u\Vert ^2 \\
&\ge (1-\e)^2\big((\l)^{-1}-\e\Vert \Cop\Vert (2+\e)\big)\Vert  u\Vert ^2.
\end{align*}
\end{proof}

The following observation extends Lemma \ref{lem:acretive} to pairs $(u, \nu)$ near $(\u,\l)$.

\begin{lemma}
\label{lem:pert-Cu}
Let $(u, \nu)\in B(\u, \e \Vert \u \Vert)\times B\left( \l, \frac{\theta}{4 \Vert \Cop \Vert} \right)$. Then
\be
\label{Mbar}
\wo{\left< \Mop_\nu z, z'\right> \geq  \frac{\theta}4 \Vert z \Vert\Vert z'\Vert,\quad \forall \, z, z' \in \vspan\{ \Cop u \}^\perp}
\ee
provided that $\e\le \e_1$, where $\e_1= \e_1(\Vert \Cop\Vert ,\l ,\theta)$ is defined in \eqref{eps11} below.
\end{lemma}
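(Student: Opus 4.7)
The plan is to view this as a perturbation of Lemma \ref{lem:pert-lambda}. Since $\Cop\u = (\l)^{-1}\u$ by \eqref{Ceigen}, the subspace $\vspan\{\Cop\u\}^\perp$ coincides with $\U_\circ^\perp$, where Lemma \ref{lem:pert-lambda} already guarantees the inf-sup constant $\theta/2$. For $u$ close to $\u$ the target subspace $\vspan\{\Cop u\}^\perp$ is a small rotation of $\U_\circ^\perp$, so one expects $\theta/2$ to degrade only mildly, leaving slack to land at $\theta/4$.

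First, I would quantify the angle between the two subspaces by introducing the unit vectors $\psi := \Cop u/\Vert\Cop u\Vert$ and $\psi_\circ := \u/\Vert\u\Vert$, so that $\vspan\{\Cop u\}^\perp = \psi^\perp$ and $\U_\circ^\perp = \psi_\circ^\perp$. Using $\Vert\u\Vert = 2\l$ from \eqref{gauge}, the bound $\Vert u-\u\Vert \le \e\Vert\u\Vert$ yields $\Vert\Cop u - \Cop\u\Vert \le 2\e\l\Vert\Cop\Vert$. Provided $\e$ is small enough to keep $\Vert\Cop u\Vert$ bounded away from zero (e.g.\ $\e \le 1/(4\l\Vert\Cop\Vert)$ gives $\Vert\Cop u\Vert \ge 1$), a standard quotient estimate then produces $\delta := \Vert\psi-\psi_\circ\Vert \le 4\e\l\Vert\Cop\Vert$.

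The core step transfers an inf-sup pair from $\U_\circ^\perp$ back to $\vspan\{\Cop u\}^\perp$ by orthogonal projection. For $z \in \vspan\{\Cop u\}^\perp$ with $\Vert z\Vert = 1$, set $z_0 := z - \<z,\psi_\circ\>\psi_\circ \in \U_\circ^\perp$; since $\<z,\psi\> = 0$, one has $\<z,\psi_\circ\> = \<z,\psi_\circ - \psi\>$, whence $\Vert z - z_0\Vert \le \delta$ and $\Vert z_0\Vert \ge \sqrt{1-\delta^2}$. Lemma \ref{lem:pert-lambda} then supplies some $v_0 \in \U_\circ^\perp$ with $\Vert v_0\Vert = 1$ and $\<\Mop_\nu z_0, v_0\> \ge (\theta/2)\Vert z_0\Vert$. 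Projecting back, $z' := v_0 - \<v_0,\psi\>\psi \in \vspan\{\Cop u\}^\perp$ satisfies $\Vert z' - v_0\Vert \le \delta$ and $\Vert z'\Vert \le 1$. Expanding
\[
\<\Mop_\nu z, z'\> = \<\Mop_\nu z_0, v_0\> + \<\Mop_\nu(z-z_0), v_0\> + \<\Mop_\nu z, z'-v_0\>
\]
and bounding the last two terms by $M_\nu\delta \le \bar M\delta$ yields $\<\Mop_\nu z, z'\> \ge (\theta/2)\sqrt{1-\delta^2} - 2\bar M\delta$. This exceeds $(\theta/4)\Vert z\Vert\Vert z'\Vert$ whenever, say, $\delta \le \min\{1/2,\,3\theta/(40\bar M)\}$; resolving this back in $\e$ produces a threshold of the form $\e_1 = \e_1(\Vert\Cop\Vert,\l,\theta) \lesssim \theta/(\bar M\,\l\Vert\Cop\Vert)$.

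The main obstacle is essentially bookkeeping rather than a deep conceptual issue: one must juggle two geometric perturbation errors (in $z$ and in $z'$) together with the loss from normalizing $z_0$, while keeping $M_\nu$ uniformly bounded by $\bar M$ over $B(\l,\theta/(4\Vert\Cop\Vert))$, so as to extract a clean closed-form $\e_1$ that fits cleanly into the definition of $\tau$ in \eqref{tau}.
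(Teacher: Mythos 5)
Your proof is correct and follows essentially the same route as the paper's: both quantify the tilt of $\vspan\{\Cop u\}^\perp$ relative to $\U_\circ^\perp$ via the identity $\<z,\Cop \u\>=\<z,\Cop(\u-u)\>$ (valid since $\<z,\Cop u\>=0$), reduce the main term to Lemma \ref{lem:pert-lambda}, and absorb the cross terms using the uniform bound $M_\nu\le \bar M$ on $B\big(\l,\tfrac{\theta}{4\Vert\Cop\Vert}\big)$, ending with a threshold $\e_1\lesssim \theta/(\bar M\,\l\Vert\Cop\Vert)$ of the same form as \eqref{eps11}. The only differences are cosmetic bookkeeping — you normalize $\Cop u$ and transfer a single supremizer back (which costs you the extra lower bound on $\Vert\Cop u\Vert$ and the slightly worse constant $4\e\l\Vert\Cop\Vert$ versus the paper's $\bar C\e=\l\Vert\Cop\Vert\e$), whereas the paper projects both arguments onto $\U_\circ^\perp$ — and your reading of \eqref{Mbar} as an inf-sup condition is indeed the intended one, consistent with how it is invoked in the proof of Theorem \ref{thm:main}.
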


\begin{proof}
Note first that the unperturbed result $u = \u$ and $\nu = \l$ was proven in Lemma \ref{lem:acretive}. To prove  the general case,   we fix $\e>0$ and let $(u, \nu)\in B(\u, \e \Vert \u \Vert)\times B(\l, \frac{\theta}{4 \Vert \Cop \Vert})$. That is,
$$
\Vert u - \u \Vert\leq \e\Vert  \u\Vert  \quad\text{and} \quad |\nu - \l |\le \frac{\theta}{4\Vert \Cop\Vert },
$$
i.e., $u$ and $\u$ differ by at most $\e$ in relative accuracy (and similarly for $\nu$ and $\l$).

For the fixed $u$, let now $z\in \vspan\{ \Cop u \}^\perp$, and let $\zp \coloneqq \Pop_{\Usp_\circ^\perp} (z)$ be its projection to $\Usp_\circ^\perp$. The first step of the proof consists in showing that $z$ and $z^\perp_\circ$ also differ by the order of $\e$ in relative accuracy. Denoting by $\Pop_v w = \<w,v\>v/\Vert v\Vert ^2$ the orthogonal projection of an element $w\in \Usp$ to the subspace spanned by a function $v\in \Usp$, we have

\begin{align}
z - \zp 
&= \Pop_{\u} z =  \Pop_{\Cop \u} z
= \frac{\<z,\Cop \u\> \Cop \u}{\Vert \Cop \u\Vert ^2} =  \frac{\<z,\Cop(\u- u)\> \Cop \u}{\Vert \Cop \u\Vert ^2},
 \end{align}
 where we have used that $\<z,\Cop u\>=0$. Taking norms, yields
 \be
  \label{zw2}
 \Vert z - \zp \Vert \leq \Vert z \Vert \Vert \Cop \Vert \frac{\Vert u - \u \Vert}{ \Vert \Cop \u \Vert} \leq  \bar C \e  \Vert z \Vert ,
 \ee
 with
 \be
 \label{barC}
 \bar C = \l  \Vert \Cop \Vert.    
 \ee

\newcommand{\tzp}{\tilde{z}_\circ^\perp}
Note next that, using \eqref{M}, the one-parameter family of linear operators
 $\Mop_{\nu}$ can be bounded uniformly for 
 $\nu \in B\Big(\l,\frac{\theta}{4\Vert \Cop\Vert }\Big)$ as 
\be\label{MbarNu}
M_\nu =\Vert \Mop_{\nu}\Vert 
\le \max_{|\nu-\l|
\le \frac{\theta}{4\Vert \Cop\Vert }}\Big\{1+ |\nu|\Vert \Cop\Vert \Big\}
\le 1+\l \Vert \Cop\Vert +\frac{\theta}4 =: \bar M.
\ee
\wo{We abbreviate $\tzp := \Pop_{\u} \tilde z$
Under the given assumption on $\nu$ we can invoke now Lemma \ref{lem:pert-lambda} and \eqref{zw2} to conclude that for any $z, \tilde z\in {\rm span}\{\Cop u\}^\perp$, 
\begin{align}
\left< \Mop_{\nu} z, \tilde z\right>
&=
\left< \Mop_{\nu} (z-\zp), (\tilde z-\tzp)\right>
+ \left< \Mop_{\nu} \zp, \tzp\right>
+ \left< \Mop_{\nu} \zp, (\tilde z-\tzp) \right>
+ \left< \Mop_{\nu} ( z-\zp) , \tzp\right>\nonumber \\
&\geq \frac{\theta}{2} \Vert \zp \Vert\Vert\tzp\Vert - M_\nu \Vert z-\zp\Vert\Vert 
\tilde z-\tzp\Vert -  M_\nu (\Vert \tzp\Vert \Vert z-\zp\Vert + \Vert \zp\Vert \Vert 
\tilde z-\tzp\Vert)
\nonumber\\
&\ge \frac{\theta}{2} \Vert \zp \Vert\Vert \tzp\Vert - \e^2\bar C^2M_\nu\Vert z\Vert
\Vert\tzp\Vert 
- 2M_\nu\Vert \zp\Vert \Vert z\Vert \bar C\e.
\end{align}
Since by \eqref{zw2}, $(1+\e\bar C)\Vert z\Vert \ge \Vert \zp\Vert \ge (1-\e\bar C)\Vert z\Vert $ we obtain
\be
\label{allw}
\left< \Mop_{\nu} z, \tilde z\right>\ge\Big\{ \frac{(1-\e\bar C)^2\theta}{2}
- \e \bar C M_\nu\big(\e\bar C + 2(1+\e\bar C)\big)\Big\}\Vert z\Vert \Vert\tilde z\Vert.
\ee
Therefore, there exists an $\e_1=\e_1(\Vert \Cop\Vert ,\l,\theta)$ such that
\be
\label{allw2}
\left< \Mop_{\nu} z, \tilde z\right>
\ge \frac{\theta}4\Vert z\Vert \Vert\tilde z\Vert, \quad \forall \e \leq \e_1.
\ee
}
In fact, elementary calculations show that \eqref{allw2} is valid for
\be
\label{eps11}
\e_1:= \frac{1}{8\bar C}\min \Big\{3,\frac{8\theta}{25 \bar M}\Big\},
\ee
 where $\bar M$ is given in \eqref{Mbar}.

Regarding the dependencies of $\e_1$, we have used \eqref{barC}, \eqref{eps0},
and that $M_\nu=\Vert \Mop_{\nu}\Vert $ depends on $\Vert \Cop\Vert ,\l,\theta$, see \eqref{Mbar}. This concludes the proof.
\end{proof}
\medskip

We are now prepared to complete the proof of Theorem \ref{thm:main}: \\[-1mm]
\noindent
\begin{proof}[Proof of Theorem \ref{thm:main}]
Let $\e \leq \bar \e \coloneqq \min(\e_0, \e_1)$ where $\e_0$ and $\e_1$ have been specified in (the proofs of) Lemmata \ref{lem:uubar} and \ref{lem:pert-Cu},
see \eqref{eps0}, \eqref{eps11}.
 Hence, the above perturbation results remain uniformly valid for all pairs
\be
\label{ass}
(u, \nu)\in \cN = B(\u,\ett)\times B\left(\l, \frac{\theta}{4 \| \Cop\|}\right)\quad \mbox{with}\quad \ett = \Vert \u \Vert \bar \e = 2\l \bar \e.
\ee
Specifically, we can take in view  of \eqref{eps0} and \eqref{eps11}
\be
\label{taudef}
\tau :=2 \l\, \min\Big\{\frac{1}{16\bar C}, \frac{\theta}{25 \bar M \bar C}\Big\}
= \min\Big\{\frac{1}{8\Vert \Cop\Vert }, \frac{2\theta}{25(1+\l\Vert \Cop\Vert +\theta/4)\Vert \Cop\Vert }\Big\}.
\ee

We fix now $(\bar u, \bar \nu) \in \cN$. The operator equation: for a given $(g, \zeta)\in \Usp\times \bR$   find the solution $(u, \nu) \in \Usp \times \bR$ of
$$
DR(\bar u, \bar \nu)(u, \nu) = (g, \zeta)
$$
 can be written as the saddle-point problem
\begin{equation}
\label{saddlebar}
\begin{array}{lcll}
\bar a(u, z)  + \bar b(z, \nu) &=& \left< g, z\right> &\quad \forall z \in \Usp\\[2mm]
\bar b(u, \alpha) &=& \alpha \zeta &\quad \forall \alpha \in \bR,
\end{array}
\end{equation}
where the $(\bar u, \bar \nu)$-dependent bilinear forms read
\be
\label{bilinbar}
\bar a(u,z):= \<\Mop_{\bar \nu} u,z\>,
\quad
\bar b(u,\alpha):=-\alpha\<\Cop \bar u,u\>.
\ee
Note that we have proceeded in the same spirit as earlier in the proof of Lemma \ref{lem:saddle} (see \eqref{bilinla}). Here we consider the general case where $(\bar u, \bar \nu)\in \cN$  recovering \eqref{bilinla} for $(\bar u, \bar \nu) = (\u, \l)$.

To prove well-posedness of \eqref{saddlebar} we proceed exactly as in Lemma
\ref{lem:saddle} and show that \eqref{saddlebar}
satisfies the  LBB conditions.
 To that end, it will be convenient to derive bounds for $\| \Cop \bar u\|$ first.
Since we have chosen $\bar u \in B(\u, \tau)=B(\u, \bar \e \| \u\|)$, we can apply Lemma \ref{lem:uubar} for the case $u=\bar u$ and $\e=\bar \e \leq \e_0 $ to deduce that
$$
\| \Cop \bar u \| \| \bar u \| \geq \left< \Cop \bar u, \bar u\right> \geq \frac{\| \bar u \|^2}{2\l}
$$
Therefore, by the reverse triangle inequality,
$$
\| \Cop \bar u \|
\geq \frac{\| \bar u \|}{2\l}
\geq \frac{1}{2\l} (\| \u \| - \| \bar u - \u \| )
\geq 1-\bar \e
$$
Similarly, by the direct triangle inequality
$$
\| \Cop \bar u \| \leq \|\Cop \u \| + \| \Cop (\bar u - \u) \| \leq 2(1+\bar \e \l \| \Cop\|).
$$
To summarize, we have just proven that
\begin{equation}
1-\bar \e \leq \| \Cop \bar u \| \leq 2(1+\bar \e \l \| \Cop\|).
\label{eq:bounds-C-bar-u}
\end{equation}

Let us now turn to proving that the LBB conditions are satisfied. We first note that $\bar a$ and $\bar b$ are bounded bilinear forms since
\be
\Vert \bar a\Vert := \sup_{u\in \Usp } \sup_{z\in \Usp } \frac{\bar a(u, z)}{ \Vert u \Vert \Vert z \Vert }\le M_{\bar \nu},\quad \Vert \bar b\Vert := \sup_{\nu\in \bR}  \sup_{z\in \Usp} \frac{| b(z, \nu) |}{\Vert z\Vert | \nu |}\le \Vert \Cop \bar u\Vert  \leq 2(1+\bar \e \l \| \Cop\|).
\ee
Next, denoting
$$
Z
=  \{ w\in \Usp \; : \; \bar b(w,\alpha)=0,\, \forall \alpha \in \bR \}
 = \{ w\in \Usp \; : \; \left< \Cop \bar u, w\right>=0 \} = \vspan\{ \Cop \bar u \}^\perp,
$$
applying Lemma \ref{lem:pert-Cu} to $\Mop_{\bar \nu}$ yields \wo{inf-sup stability} of $\Mop_{\bar \nu}$ on $Z$,
$$
\wo{\left<\Mop_{\bar \nu} w, w'\right> \geq \frac{\theta}4 \Vert w \Vert\Vert w'\Vert,\quad \forall w, w' \in \vspan\{ \Cop \bar u \}^\perp.}
$$
In addition, using the lower bound from \eqref{eq:bounds-C-bar-u},
$$
\inf_{\alpha\in \bR}  \sup_{z\in \Usp} \frac{| \bar b(z, \alpha) |}{\Vert z\Vert | \alpha |}
=  \sup_{z\in \Usp}  \frac{ \left< \Cop \bar u, z \right>}{\Vert z\Vert }
= \Vert \Cop \bar u \Vert \geq 1- \bar \e.
$$
This proves that the LBB conditions are satisfied. Thus, problem \eqref{saddlebar} is well-posed, and invoking standard stability estimates e.g. from \cite{BrezziFortin},  we have
\begin{align*}
\| u \| &\leq \frac{4}{\theta} \| g\| + \frac{1}{1-\bar \e}\left( 1 + 4 \frac {M_{\bar \nu}}{\theta} \right) |\xi | \\
&\leq \underbrace{\left( \frac{16}{\theta^2} + \frac{1}{(1-\bar \e)^2}\left( 1 + 4 \frac {M_{\bar \nu}}{\theta} \right)^2  \right)^{1/2} }_{\coloneqq \beta_1} \|(g, \xi) \|  , \\
|\nu| &\leq  \frac{1}{1-\bar \e}\left( 1 + 4 \frac {M_{\bar \nu}}{\theta} \right) \|g\| + \frac{M_{\bar \nu}}{(1-\bar \e)^2}\left( 1 + 4 \frac {M_{\bar \nu}}{\theta} \right) |\nu| \\
&\leq \left(
  \frac{1}{(1-\bar \e)^2}\left( 1 + 4 \frac {M_{\bar \nu}}{\theta} \right)^2
  + \frac{M^2_{\bar \nu}}{(1-\bar \e)^4}\left( 1 + 4 \frac {M_{\bar \nu}}{\theta} \right)^2
\right)^{1/2} \| (g, \xi)\| \\
&=
\underbrace{\frac{1}{(1-\bar \e)}\left( 1 + 4 \frac {M_{\bar \nu}}{\theta} \right)
\left( 1+\frac{M_{\bar \nu}}{(1-\bar \e)^2}\right)^{1/2}}_{\coloneqq \beta_2}
\|(g, \xi)\|.
\end{align*}
Thus, we
conclude that
$$
\| DR(\bar u, \bar \nu)^{-1} \|_{\cL(\Uspext, \Uspext)} \leq \bar \beta = \sqrt{\beta^2_1 + \beta^2_2},\quad \forall (\bar u, \bar \nu) \in \cN,
$$
where \eqref{barbeta} follows from substituting $\e = \frac{\tau}{2\l}$.
Thanks to the above inequalities, and recalling from \eqref{MbarNu} that
$$
M_{\bar \nu} \leq \bar M \coloneqq 1+\l \Vert \Cop\Vert +\frac{\theta}4,\quad \forall \bar \nu \in B\left(\l, \frac{\theta}{4\|\Cop\|}\right),
$$
the assertion follows from \eqref{taudef}.
\end{proof}

\subsection{Convergence of the Newton Scheme}\label{ssec:conv}

In this section, we prove that the Newton scheme \eqref{Newton} is locally quadratically convergent to $(\u, \l)$. Proving this will require leveraging  Lipschitz continuity of $DR$, a property which we record in the next lemma.

\begin{lemma}
\label{lem:Lip}
The derivative $DR$ is Lipschitz continuous, and
\be
\label{Lip}
\Vert DR(u_1,\nu_1) - DR(u_2, \nu_2) \Vert_{\cL(\Usp\times \R,\Usp\times \R)} \leq \gamma \Vert (u_1,\nu_1) - (u_2, \nu_2) \Vert_{\Uspext},\quad \forall\,\, (u_1,\nu_1),   (u_2, \nu_2) \in \Usp \times \R,
\ee
where the Lipschitz constant $\gamma$ can be bounded by $\gamma \le \sqrt{2}\Vert \Cop\Vert $.
\end{lemma}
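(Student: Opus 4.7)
The Fr\'echet derivative $DR$ depends affinely on $(\bar u,\bar\nu)$, since in the block representation \eqref{Rlin} the diagonal entry is $\Mop_{\bar\nu}=\id-\bar\nu\Cop$ and the off-diagonal entries depend linearly on $\bar u$ through $\Cop\bar u$. Hence the difference operator $\Delta := DR(u_1,\nu_1)-DR(u_2,\nu_2)$ is again bounded linear from $\Uspext$ to $\Uspext$, and evaluating at an arbitrary $(u,\nu)\in\Uspext$ yields the two components
\begin{equation*}
\Delta(u,\nu) = \bigl(-(\nu_1-\nu_2)\,\Cop u \;-\; \nu\,\Cop(u_1-u_2),\;\; -\langle \Cop(u_1-u_2),u\rangle\bigr).
\end{equation*}

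The first component, call it $a$, can be factored as $a = \Cop\bigl((\nu_2-\nu_1)u+\nu(u_2-u_1)\bigr)$, so that $\|a\|\le \|\Cop\|\bigl(|\nu_1-\nu_2|\,\|u\|+|\nu|\,\|u_1-u_2\|\bigr)$. Applying the discrete Cauchy--Schwarz inequality to the two-term sum gives
\begin{equation*}
\|a\| \;\le\; \|\Cop\|\,\sqrt{(\nu_1-\nu_2)^2+\|u_1-u_2\|^2}\,\sqrt{\|u\|^2+\nu^2}
\;=\; \|\Cop\|\,\bigl\|(u_1,\nu_1)-(u_2,\nu_2)\bigr\|_{\Uspext}\,\bigl\|(u,\nu)\bigr\|_{\Uspext}.
\end{equation*}
The second component, $b = -\langle \Cop(u_1-u_2),u\rangle$, is estimated by a single Cauchy--Schwarz inequality together with continuity of $\Cop$, yielding
\begin{equation*}
|b|\;\le\; \|\Cop\|\,\|u_1-u_2\|\,\|u\|\;\le\;\|\Cop\|\,\bigl\|(u_1,\nu_1)-(u_2,\nu_2)\bigr\|_{\Uspext}\,\bigl\|(u,\nu)\bigr\|_{\Uspext}.
\end{equation*}

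Squaring and summing these two bounds gives $\|\Delta(u,\nu)\|_{\Uspext}^2=\|a\|^2+b^2 \le 2\,\|\Cop\|^2\,\|(u_1,\nu_1)-(u_2,\nu_2)\|_{\Uspext}^2\,\|(u,\nu)\|_{\Uspext}^2$. Taking the supremum over unit vectors $(u,\nu)\in\Uspext$ yields
\begin{equation*}
\bigl\|DR(u_1,\nu_1)-DR(u_2,\nu_2)\bigr\|_{\cL(\Uspext,\Uspext)} \;\le\; \sqrt{2}\,\|\Cop\|\,\bigl\|(u_1,\nu_1)-(u_2,\nu_2)\bigr\|_{\Uspext},
\end{equation*}
which is the desired estimate with $\gamma\le\sqrt{2}\,\|\Cop\|$.

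There is no real obstacle here; the statement follows from the bilinear structure of $DR$ and the fact that $\Cop\in\cL(\Usp,\Usp)$ by Theorem~\ref{prop;pso}. The only minor care needed is how the pair-norm $\|(\cdot,\cdot)\|_{\Uspext}$ from \eqref{eq:norm-eigenpair} interacts with the off-diagonal cross-terms; using one Cauchy--Schwarz on the sum $|\nu_1-\nu_2|\|u\|+|\nu|\|u_1-u_2\|$ is precisely what produces the factor $\sqrt{2}$ rather than a larger constant.
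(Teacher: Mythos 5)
Your proof is correct and follows essentially the same route as the paper, which merely notes that $DR$ is affine in $(\bar u,\bar\nu)$ and that the bound follows from repeated use of the triangle and Cauchy--Schwarz inequalities. Your write-up simply fills in the details the paper leaves implicit, and in particular your use of the discrete Cauchy--Schwarz inequality on the two-term sum is exactly the step that produces the constant $\sqrt{2}\,\Vert\Cop\Vert$ rather than something larger.
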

\begin{proof}
Since $DR(\cdot,\cdot)$ is linear in both arguments, it is Lipschitz continuous if and only if it is bounded. A repeated combination of the triangle inequality and the Cauchy--Schwarz inequality yields the bound.  
\end{proof}

We are now in position to prove convergence of the Newton scheme. \corrw{The main arguments in the development are actually classical, and can be found in different references (see, e.g., \cite{OR2000, DR2008}).} It will be convenient to denote the ball in $\Uspext$ centered at $(\u,\l)$ of radius $\omega$ as
$$
K_\omega( \u,\l )
\coloneqq
B((\u,\l), \omega)
=\{ (u,\mu) \in \Uspext \; : \; \Vert (\u,\l) - (u,\mu) \Vert < \omega \}.
$$

\begin{theorem}[Convergence of the Newton scheme]
\label{thm:conv}
Let $\cN$ be the neighborhood defined in Theorem \ref{thm:main}. Let $\omega>0$ be sufficiently small such that
\be
\label{omega}
K_\omega( \u,\l ) \subset \cN,\quad \text{and} \quad \omega \leq \frac{2}{\bar\beta\gamma},
\ee
where $\bar \beta$ is defined in Theorem \eqref{thm:main}, and $\gamma$ is the Lipschitz constant from Lemma \ref{lem:Lip}.

If the initial guess $(u_0,\lambda_0) \in K_\omega( \u,\l)$, then the sequence of Newton iterates $((u_n, \lambda_n))_{n=0}^\infty$ stays in $K_\omega( \u,\l)$ and has quadratic convergence,
\begin{align}
\label{quadratic-1}
\Vert (u_{n+1}, \lambda_{n+1}) - (\u, \l) \Vert
& \leq \frac{\bar\beta \gamma}{2} \Vert (u_n,\lambda_n) -  (\u, \l) \Vert^2.
\end{align}
\end{theorem}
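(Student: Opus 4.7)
The plan is to follow the classical Newton--Kantorovich argument, combining the uniform bound on $\Vert DR^{-1}\Vert$ from Theorem \ref{thm:main} with the Lipschitz bound from Lemma \ref{lem:Lip}, and closing the loop by induction. First I would observe that $R(\u,\l)=0$: indeed $\Mop_\l \u=\u-\l\Cop\u=0$ because $\Cop\u=\m\u=(\l)^{-1}\u$, while the second component vanishes by the chosen normalization. This identity is the essential structural input that converts the error recursion into a purely quadratic one.

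Next, assuming inductively that $(u_n,\lambda_n)\in K_\omega(\u,\l)\subset \cN$, I would write the Newton update as
\begin{equation*}
(u_{n+1},\lambda_{n+1})-(\u,\l) = (u_n,\lambda_n)-(\u,\l) - DR(u_n,\lambda_n)^{-1} R(u_n,\lambda_n),
\end{equation*}
and use the fundamental theorem of calculus along the segment $(\u,\l)+t\bigl((u_n,\lambda_n)-(\u,\l)\bigr)$, $t\in[0,1]$, which lies entirely in $\cN$ by convexity of the product of balls $\cN$. Since $R(\u,\l)=0$, this yields
\begin{equation*}
R(u_n,\lambda_n) = \int_0^1 DR\bigl((\u,\l)+t\bigl((u_n,\lambda_n)-(\u,\l)\bigr)\bigr)\bigl((u_n,\lambda_n)-(\u,\l)\bigr)\,dt.
\end{equation*}
Subtracting the identity $DR(u_n,\lambda_n)\bigl((u_n,\lambda_n)-(\u,\l)\bigr)=\int_0^1 DR(u_n,\lambda_n)\bigl((u_n,\lambda_n)-(\u,\l)\bigr)\,dt$ and applying $DR(u_n,\lambda_n)^{-1}$ leads to
\begin{equation*}
(u_{n+1},\lambda_{n+1})-(\u,\l) = DR(u_n,\lambda_n)^{-1}\int_0^1\!\Bigl[DR(u_n,\lambda_n)-DR\bigl((\u,\l)+t\bigl((u_n,\lambda_n)-(\u,\l)\bigr)\bigr)\Bigr]\bigl((u_n,\lambda_n)-(\u,\l)\bigr)\,dt.
\end{equation*}

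Then I would take norms: applying the uniform bound $\Vert DR(u_n,\lambda_n)^{-1}\Vert \leq \bar\beta$ from Theorem \ref{thm:main}, the Lipschitz bound $\Vert DR(u_n,\lambda_n)-DR(\cdot)\Vert\leq \gamma(1-t)\Vert (u_n,\lambda_n)-(\u,\l)\Vert$ from Lemma \ref{lem:Lip}, and integrating the factor $(1-t)$ over $[0,1]$ produces exactly the constant $1/2$, yielding the estimate \eqref{quadratic-1}. Finally, the invariance $K_\omega(\u,\l)\ni(u_{n+1},\lambda_{n+1})$ follows from the hypothesis $\omega\leq 2/(\bar\beta\gamma)$: the quadratic bound gives $\Vert (u_{n+1},\lambda_{n+1})-(\u,\l)\Vert \leq \tfrac{\bar\beta\gamma}{2}\omega\cdot\Vert (u_n,\lambda_n)-(\u,\l)\Vert \leq \Vert (u_n,\lambda_n)-(\u,\l)\Vert\leq\omega$, closing the induction.

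The main obstacle is not in the estimate itself, which is routine once one has uniform invertibility and Lipschitz continuity, but rather in guaranteeing that $DR(u_n,\lambda_n)^{-1}$ is defined and uniformly bounded at every iterate; the proof relies crucially on the inclusion $K_\omega(\u,\l)\subset\cN$ propagating along the iteration, which in turn uses the smallness condition $\omega\leq 2/(\bar\beta\gamma)$. A minor but worth-noting point is the verification that $R(\u,\l)=0$ under the stated normalization $\Vert\Cop\u\Vert$, since otherwise the argument breaks down; this is where the choice of the gauge condition in \eqref{gauge} is used essentially (with the understanding that the normalization constant in $R_2$ must match the squared norm of $\Cop\u$).
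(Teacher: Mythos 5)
Your proposal is correct and follows essentially the same route as the paper: the standard Newton--Kantorovich argument combining $R(\u,\l)=0$, the uniform bound $\Vert DR^{-1}\Vert\le\bar\beta$ on $\cN$ from Theorem \ref{thm:main}, the Lipschitz bound from Lemma \ref{lem:Lip} applied along the segment joining the iterate to $(\u,\l)$ (the paper centers the Taylor remainder at $(u_n,\lambda_n)$ and integrates $\gamma t$, you center it at the other endpoint and integrate $\gamma(1-t)$ --- the same $1/2$ either way), and the induction closed via $\omega\le 2/(\bar\beta\gamma)$. Your closing caveat about the gauge is well taken, since with $R_2(u,\nu)=1-\Vert\Cop u\Vert^2/2$ the normalization $\Vert\Cop\u\Vert=2$ in \eqref{gauge} gives $R_2(\u,\l)=-1$ rather than $0$, a constant-level mismatch in the paper that is harmless for the argument once the normalization constant is made consistent.
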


\begin{remark}
\label{rem:omega}
Notice that, in view of Remark \ref{rem:tau}, the condition on $\omega$ in \eqref{omega} agrees in essence with the condition on the neighborhood $\cN$
and therefore introduces no severe additional constraints.
\end{remark}

\begin{proof}
We sketch it for the convenience of
the reader since the proof consists of ``lifting'' standard arguments to an operator level. By induction, for a given $n\geq0$, we assume that $(u_n, \lambda_n) \in K_\omega(\u,\l)$ (the case $n=0$ being true by assumption). We show that there is a contraction in the error at the next step, and that $(u_{n+1}, \lambda_{n+1}) \in K_\omega(\u,\l)$. To that end,  since $R(\u,\l)=0$, we have
\begin{align*}
(u_{n+1}, \lambda_{n+1}) - (\u, \l)
&= (u_n, \lambda_n) - (\u, \l) - DR(u_n, \lambda_n)^{-1} R(u_n, \lambda_n) \\
&= (u_n, \lambda_n) - (\u, \l) - DR{(u_n, \lambda_n)}^{-1} \left( R(u_n, \lambda_n) - R(\u, \l) \right) \\
&= DR{(u_n, \lambda_n)}^{-1} \left( R(\u, \l) -  R(u_n, \lambda_n)
- DR{(u_n, \lambda_n)}( (\u, \l) -  (u_n, \lambda_n)) \right).
\end{align*}
Note that the derivative $DR{(u_n, \lambda_n)}^{-1}$ is well defined by Theorem \ref{thm:main} since $(u_n,\lambda_n) \in K_\omega(\u,\l) \subset \cN$. Taking norms gives
\begin{align}
\label{quadconv}
\Vert (u_{n+1}, \lambda_{n+1}) - (\u, \l) \Vert
& \leq
\bar\beta \Vert R(\u, \l) -  R(u_n, \lambda_n)
- DR{(u_n, \lambda_n)}( (\u, \l) -  (u_n, \lambda_n)) \Vert.
\end{align}
To derive a bound for the right-hand side of \eqref{quadconv}, we introduce the segment
$$
\ell_n(t) \coloneqq  (u_n, \lambda_n)+t((\u, \l)-(u_n, \lambda_n)),\quad \forall t\in [0,1]
$$
joining $(u_n, \lambda_n)$ and $(\u, \l)$, and we consider 
$$
\phi(t) \coloneqq R\circ \ell_n(t)
$$
which is the restriction of $R$ to $\ell_n(t)$. Note that since
$$
\phi'(t)= DR\left((u_n, \lambda_n)+t((\u, \l)-(u_n, \lambda_n))\right) \left( (\u, \l) - (u_n, \lambda_n)\right),
$$
we have
\begin{align*}
&\Vert R(\u, \l) -  R(u_n, \lambda_n)
- DR{(u_n, \lambda_n)}( (\u, \l) -  (u_n, \lambda_n)) \Vert  
= \Vert \phi(1)-\phi(0) - \phi'(0) \Vert \\
&\quad \leq \int_0^1 \Vert \phi'(t)-\phi'(0) \Vert \dt  
\leq \frac{\gamma}{2} \Vert (\u, \l) -  (u_n, \lambda_n) \Vert^2.
\end{align*}
Inserting this last inequality into \eqref{quadconv} yields \eqref{quadratic-1}.

We finally prove that $(u_{n+1},\lambda_{n+1}) \in K_\omega((\u, \l))$. This follows from the   inequalities
\begin{align*}
\Vert (u_{n+1}, \lambda_{n+1}) - (\u, \l) \Vert
& \leq \frac{\bar\beta \gamma}{2} \Vert (\u, \l) -  (u_n, \lambda_n) \Vert \Vert (\u, \l) -  (u_n, \lambda_n) \Vert  
\leq \frac{\bar\beta \gamma \omega}{2} \Vert (\u, \l) -  (u_n, \lambda_n) \Vert\\  
&  \leq \Vert (\u, \l) -  (u_n, \lambda_n) \Vert
< \omega .
\end{align*}
\end{proof}

\section{Perturbed Newton scheme}
\label{sec:Newton-scheme-perturbed}
\subsection{Goals and notation}
The idealized Newton scheme \eqref{Newton} is formulated on a continuous level (see (CC1) in Section \ref{ssec:novelty}), i.e., the Newton updates \eqref{newton-update} require exact inversions of the system \eqref{newton-update}. According to (CC2), (CC3) in
Section \ref{ssec:novelty}, we aim at a practical realization based on approximating these 
updates within judicious controllable tolerances.
Specifically, for any $(\bar u, \bar \nu)\in \Uspext$, any target accuracy $\eta>0$, and any right-hand side $f\in \Wsp'\times \bR$, numerical solver has to deliver an approximation $(w_\eta, \alpha_\eta)\in \Uspext$ to the operator equation
$$
DR(\bar u, \bar \nu) (w, \alpha) = f.
$$
We denote a numerical solver that realizes accuracy $\eta$ with output $(w_\eta, \alpha_\eta)$ by
\be
\label{eq:approx-inv-DR}
[ DR{(\bar u, \bar \nu )}^{-1}, f ; \eta ]\to (w_\eta, \alpha_\eta),
\ee
meaning that
$$
\Vert (w_\eta, \alpha_\eta) - DR{(\bar u, \bar \nu)}^{-1} f\Vert  \leq \eta.
$$
  
We postpone discussing possible realizations of $[ DR{(\bar u, \bar \nu )}^{-1}, f ; \eta ]$. Assuming for the moment to have such a scheme at hand, we discuss   in  Section \ref{sec:conv-pert-newton} 
first for which (dynamically adjusted) tolerances $\eta=\eta_n$ a corresponding
 inexact Newton method  converges at a linear or even quadratic rate, see (CC2), Section
 \ref{ssec:novelty}. 

Finally, we discuss in Section \ref{ssec:prac} strategies for numerically realizing such accuracy controlled inexact Newton updates.

\subsection{Scheme and Convergence Analysis}
\label{sec:conv-pert-newton}
Recall from \eqref{Newton} and \eqref{newton-update} that the exact Newton iteration produces a sequence $(u_n, \lambda_n)_{n\geq 0}$ defined by
\be 
u_{n+1} = u_n + \delta_n^{(u)},\quad
\lambda_{n+1} = \lambda_n + \delta_n^{(\lambda)},\quad n\in \N_0,
\end{equation}
with
\be
\label{N-update}
(\delta_n^{(u)}, \delta_n^{(\lambda)})
= - DR(u_n,\lambda_n)^{-1} R(u_n, \lambda_n).
\ee
The inexact iteration gives rise to a sequence $(\bar u_n, \bar \lambda_n)_{n\geq 0}$ defined by
\begin{align}
\bar u_{n+1} &= \bar u_n + \bar h_n^{(u)} , \quad 
\bar \lambda_{n+1} = \bar \lambda_n + \bar h_n^{(\lambda)} ,\quad n\in \N_0,
\end{align}
where
\be
\label{eq:bar-h}
(\bar h_n^{(u)}, \bar h_n^{(\lambda)})
= [ DR(\bar u_n, \bar \lambda_n)^{-1}, -R(\bar u_n, \bar \lambda_n); \eta_n ]
\ee
for some accuracy tolerance $\eta_n$. The computable updates $(\bar h_n^{(u)}, \bar h_n^{(\lambda)})$  approximate the solution to the operator equation
$$
DR(\bar u_n, \bar \lambda_n)(h_n^{(u)}, h_n^{(\lambda)})
=
-  R(\bar u_n, \bar \lambda_n).
$$
By definition of $[ DR(\bar u_n, \bar \lambda_n)^{-1}, -R(\bar u_n, \bar \lambda_n); \eta_n ]$, we thus have
\be
\label{eta-bar-h}
\Vert  (\bar h_n^{(u)}, \bar h_n^{(\lambda)}) - (h_n^{(u)}, h_n^{(\lambda)}) \Vert \leq \eta_n.
\ee
The next theorem gives an upper bound for the value of $\eta_n$ that preserves the same quadratic convergence rate as in the exact Newton scheme. For its proof, it will be convenient to introduce notation for the convergence error of the exact and perturbed schemes
\begin{align}
  e_n \coloneqq \Vert  (u_n, \lambda_n) - (\u, \l)\Vert , \qquad 
  \bar e_n \coloneqq \Vert  (\bar u_n, \bar \lambda_n) - (\u, \l)\Vert .
\end{align}

\begin{theorem}
\label{thm:convergence}
  \corr{For the neighborhood $\cN = B(\u,\ett)\times B\Big(\l, \frac{\theta}{4 || \Cop||}\Big)$, $\ett = \Vert \u \Vert \bar \e$, defined in \eqref{ass}}, let $\omega>0$ be sufficiently small such that 
  \be
  \label{omega3}
  K_\omega( \u,\l ) \subset \cN,\quad \text{and} \quad \omega \leq \frac{1}{3\bar\beta\gamma}.
  \ee
  Then, if $(\bar u_0, \bar \lambda_0) \in K_\omega( \u,\l )$,
  \begin{equation}
  \bar e_{n+1} \leq \eta_n + \frac{\bar \beta \gamma}{2}(\bar e_n^2+2e_n^2).
  \label{eq:conv-pert-newton}
  \end{equation}
  In addition, if 
  \be
  \label{eta-n-rule}
  \eta_n \leq \min\left(\frac \omega 2, \frac{\bar \beta \gamma}{2} \bar e_n^2\right),
  \ee
 all iterates remain in $K_\omega( \u,\l )$, i.e., 
  \be
  \label{inK}
  (u_n,\lambda_n) \in   K_\omega( \u,\l ),\quad \forall n \geq 0.
  \ee
Finally,
  we retain  quadratic convergence as in the unperturbed case, namely
  as soon as $n_0$ satisfies
  \be
\label{n0}
 {\bar\beta\gamma}\bar e_{n_0}^2\le  {\omega},
\ee
one has
  \begin{equation}
  \bar e_{n+1} \le 4\bar\beta\gamma \bar e_n^2,\quad  n\ge n_0. 
  \label{eq:conv-pert-newton-full-induction}
  \end{equation}
 \end{theorem}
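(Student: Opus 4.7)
The plan is to reduce the analysis to a single-step comparison with an exact Newton update, then propagate the recursion inductively. For a given perturbed iterate $(\bar u_n,\bar\lambda_n)\in K_\omega(\u,\l)\subset\cN$, I would introduce the auxiliary point
\[
(\tilde u_{n+1},\tilde\lambda_{n+1})\coloneqq(\bar u_n,\bar\lambda_n)-DR(\bar u_n,\bar\lambda_n)^{-1}R(\bar u_n,\bar\lambda_n),
\]
obtained by applying one \emph{exact} Newton step to the perturbed predecessor. Since Theorem \ref{thm:main} guarantees $\Vert DR(\bar u_n,\bar\lambda_n)^{-1}\Vert\le\bar\beta$ at any point of $\cN$, the mean-value/Lipschitz argument driving the proof of Theorem \ref{thm:conv} applies verbatim to this single step, producing the one-step quadratic estimate
\[
\Vert (\tilde u_{n+1},\tilde\lambda_{n+1})-(\u,\l)\Vert \le \tfrac{\bar\beta\gamma}{2}\bar e_n^2.
\]
The triangle inequality combined with the tolerance bound \eqref{eta-bar-h} then yields \eqref{eq:conv-pert-newton}; the additional $2 e_n^2$ term in the stated recursion is picked up by routing the comparison through the ideal iterate $(u_{n+1},\lambda_{n+1})$ and invoking $e_{n+1}\le\tfrac{\bar\beta\gamma}{2}e_n^2$ from Theorem \ref{thm:conv}.

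For the inclusion \eqref{inK}, I would proceed by induction on $n$. The base case $n=0$ is the standing hypothesis. For the inductive step, inserting both bounds from \eqref{eta-n-rule} into \eqref{eq:conv-pert-newton}, and using $\bar e_n\le\omega$ (inductive hypothesis) together with $e_n\le\omega$ (from Theorem \ref{thm:conv} applied to the ideal sequence that starts at the same initial guess), the restriction $\omega\le 1/(3\bar\beta\gamma)$ in \eqref{omega3} forces $\bar e_{n+1}\le\omega/2+\tfrac{3\bar\beta\gamma\omega^2}{2}\le\omega$, which closes the induction and keeps all iterates in $K_\omega(\u,\l)\subset\cN$ so that Theorem \ref{thm:main} and the approximate solver \eqref{eq:approx-inv-DR} remain applicable at every step.

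Finally, to recover the full quadratic rate \eqref{eq:conv-pert-newton-full-induction}, once $n_0$ satisfies \eqref{n0} I would substitute $\eta_n\le\tfrac{\bar\beta\gamma}{2}\bar e_n^2$ into \eqref{eq:conv-pert-newton} to obtain $\bar e_{n+1}\le\bar\beta\gamma\bar e_n^2+\bar\beta\gamma e_n^2$. The point is that the pure Newton errors $e_n$ decay doubly exponentially by Theorem \ref{thm:conv}, while $\bar e_n$ is bounded below only by the accumulated perturbation; past the threshold $n_0$ one thus has $e_n\lesssim\bar e_n$ (in fact $e_n\le\sqrt{3}\bar e_n$ suffices), and absorbing the $e_n^2$ term into $\bar e_n^2$ yields the claimed factor-$4$ constant. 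The threshold \eqref{n0} is precisely what guarantees $4\bar\beta\gamma\bar e_n<1$, so contraction propagates and the quadratic decay persists for all $n\ge n_0$.

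The main obstacle I anticipate is the simultaneous bookkeeping of the two error sequences $e_n$ and $\bar e_n$: since the tolerances $\eta_n$ are prescribed in terms of $\bar e_n$ while the recursion for $\bar e_{n+1}$ involves $e_n$ as well, one needs a clean quantitative comparison between them that is maintained throughout the iteration, and the induction in the second step must be set up so that one never leaves the neighborhood $\cN$ on which the uniform bound $\bar\beta$ of Theorem \ref{thm:main} and the accuracy-controlled solver \eqref{eq:approx-inv-DR} are both valid.
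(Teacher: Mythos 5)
Your proof follows the same skeleton as the paper's (triangle inequality through an exact Newton step, the Lipschitz/mean-value one-step quadratic estimate from Theorem \ref{thm:conv}, and induction for the inclusion \eqref{inK}), but with one genuinely different and in fact advantageous choice of pivot. The paper compares $(\bar u_{n+1},\bar\lambda_{n+1})$ to the ideal iterate $(u_{n+1},\lambda_{n+1})$ launched from $(u_n,\lambda_n)$, and then bounds the difference of the two exact Newton maps by inserting $(\u,\l)$; this is what produces the mixed term $\frac{\bar\beta\gamma}{2}(\bar e_n^2+2e_n^2)$. You instead compare to the exact Newton step launched from the \emph{perturbed} predecessor $(\bar u_n,\bar\lambda_n)$, which by \eqref{eta-bar-h} is within $\eta_n$ of $(\bar u_{n+1},\bar\lambda_{n+1})$, yielding the sharper one-step bound $\bar e_{n+1}\le\eta_n+\frac{\bar\beta\gamma}{2}\bar e_n^2$; the stated \eqref{eq:conv-pert-newton} then follows by merely adding the nonnegative term $\bar\beta\gamma e_n^2$. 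This sharper form pays off in the last part: with $\eta_n\le\frac{\bar\beta\gamma}{2}\bar e_n^2$ it gives $\bar e_{n+1}\le\bar\beta\gamma\bar e_n^2\le 4\bar\beta\gamma\bar e_n^2$ with no reference to $e_n$ at all.

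Where you should be careful is the justification you actually offer for \eqref{eq:conv-pert-newton-full-induction}, namely absorbing $\bar\beta\gamma e_n^2$ into $\bar e_n^2$ via ``$e_n\lesssim\bar e_n$ past the threshold''. As stated this is a heuristic, not a proof: the two sequences evolve independently from the common initial guess, and nothing you have established prevents the perturbed iterate from serendipitously landing closer to $(\u,\l)$ than the exact one at some step, in which case $e_n^2\le 3\bar e_n^2$ fails. (The paper's own derivation of \eqref{eq:conv-pert-newton-full-induction} is equally terse on this point.) The clean fix is simply to use your own sharper one-step estimate throughout and drop the $e_n$ comparison. Also, your remark that \eqref{n0} ``guarantees $4\bar\beta\gamma\bar e_n<1$'' does not quite hold numerically ($\bar\beta\gamma\bar e_{n_0}^2\le\omega\le\frac{1}{3\bar\beta\gamma}$ only gives $4\bar\beta\gamma\bar e_{n_0}\le 4/\sqrt3>1$); again, with the sharper recursion $\bar e_{n+1}\le\bar\beta\gamma\bar e_n^2$ the contraction $\bar\beta\gamma\bar e_n\le\bar\beta\gamma\omega\le\frac13$ is immediate and the issue disappears.
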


\begin{proof}
For any $n\geq 0$, we have
\begin{align}
\bar e_{n+1} =
\Vert  (\bar u_{n+1}, \bar \lambda_{n+1}) - (\u, \l)\Vert 
&\leq \Vert  (\bar u_{n+1}, \bar \lambda_{n+1}) - (u_{n+1}, \lambda_{n+1})\Vert 
+ \Vert  (u_{n+1}, \lambda_{n+1}) - (\u, \l)\Vert  \\
&\leq \Vert  (\bar u_{n+1}, \bar \lambda_{n+1}) - (u_{n+1}, \lambda_{n+1})\Vert  + \frac{\bar \beta \gamma}{2} e_n^2,
\label{eq:ineq-aux}
\end{align}
where we have \corr{used \eqref{quadratic-1}} in the last inequality. We next build a bound for the first term of the inequality. We have
\begin{align*}
  &\Vert  (\bar u_{n+1}, \bar \lambda_{n+1}) - (u_{n+1}, \lambda_{n+1})\Vert  
  =
  \Vert  (\bar u_{n}, \bar \lambda_{n}) + (\bar h_n^{(u)}, \bar h_n^{(\lambda)})- (u_{n}, \lambda_{n}) - (\delta_n^{(u)}, \delta_n^{(\lambda)})\Vert  \nonumber\\
  &\leq \Vert  (\bar h_n^{(u)}, \bar h_n^{(\lambda)}) - (h_n^{(u)}, h_n^{(\lambda)}) \Vert 
  +
  \Vert  (\bar u_{n}, \bar \lambda_{n}) - DR(\bar u_n, \bar \lambda_n)^{-1} R(\bar u_n, \bar \lambda_n) - (u_n, \lambda_n) + DR(u_n, \lambda_n)^{-1}R(u_n, \lambda_n) \Vert  
  \nonumber\\
  &\leq \eta_n + \Vert  (\bar u_{n}, \bar \lambda_{n}) -(\u, \l) - DR(\bar u_n, \bar \lambda_n)^{-1} \left( R(\bar u_n, \bar \lambda_n) -R(\u, \l) \right) \Vert  \\
  & \qquad\;\, + \Vert  (u_n, \lambda_n) -(\u, \l) - DR(u_n, \lambda_n)^{-1}\left( R(u_n, \lambda_n) - R(\u,\l) \right) \Vert ,
\end{align*}
where we have added and subtracted $(\u, \l)$ and used that $R(\u,\l)=0$ to derive the last inequality. Now, by the same arguments as in the proof of Theorem \ref{thm:conv}, we conclude that
$$
\Vert  (\bar u_{n}, \bar \lambda_{n}) -(\u, \l) - DR(\bar u_n, \bar \lambda_n)^{-1} \left( R(\bar u_n, \bar \lambda_n) -R(\u, \l) \right) \Vert 
\leq \frac{\bar \beta \gamma}{2} \Vert  (\bar u_{n}, \bar \lambda_{n}) -(\u, \l) \Vert ^2
= \frac{\bar \beta \gamma}{2} \bar e_n^2,
$$
and
$$
\Vert  (u_n, \lambda_n) -(\u, \l) - DR(u_n, \lambda_n)^{-1}\left( R(u_n, \lambda_n) - R(\u,\l) \right) \Vert  \leq \frac{\bar \beta \gamma}{2} e_n^2.
$$
Hence, we obtain 
\be
\label{sofar}
\Vert  (\bar u_{n+1}, \bar \lambda_{n+1}) - (u_{n+1}, \lambda_{n+1})\Vert  \leq \eta_n + \frac{\bar \beta \gamma}{2} ( \bar e_n^2 + e_n^2 ).
\ee
Inserting this inequality into \eqref{eq:ineq-aux} yields \eqref{eq:conv-pert-newton}.
Since  by assumption, $(\bar u_0,\bar\lambda_0)=(u_0,\lambda_0)\in 
K_\omega((\u,\l))$, we infer inductively from  \eqref{omega3} and \eqref{eta-n-rule} that
\be
\label{en}
\bar e_{n+1} \le \eta_n+ \frac{\bar\beta\gamma}{2}\big(\bar e_n^2 + 2e_n^2\big)
= \eta_n + \frac{3\bar\beta\gamma\omega}{2}\omega \le \eta_n 
+  \frac{\omega}2 <\omega ,
\ee
which is \eqref{inK}.

Concerning inequality \eqref{eq:conv-pert-newton-full-induction}, substituting
\eqref{eta-n-rule} into \eqref{sofar}, yields under the provision \eqref{n0}
the asserted quadratic convergence \eqref{eq:conv-pert-newton-full-induction}.
\end{proof}

Realizing an update accuracy of the order $\eta_n\sim \bar e_n^2$ may actually
entail a bit of a challenge for a numerical realization. It is important to note though that perhaps of
equal practical importance is the fact that a less demanding tolerance $\eta_n$
would still give rise to a robust first-order scheme with  control on
the quantitative error reduction rate as detailed next. 
\begin{remark}
\label{rem:linear}
 Suppose that for any given $0< \zeta \ll 1$, $n_1$ is large enough to ensure
 that $3\bar\beta\gamma \bar e_n\le \zeta/2$, $n\ge n_1$, then whenever
\be
\label{relaxeta}
\eta_n = \min\Big\{\frac{\omega}2,\frac{\zeta}2 \bar e_n\Big\},
\ee
 \eqref{eq:conv-pert-newton}
says that  for $n\ge n_1$  
$
\bar e_{n+1} \le \Big(\frac{\zeta}2 + \frac{3\bar\beta\gamma}2 \bar e_n\Big)\bar e_n,
$
which shows that
\be
\label{linred}
\bar e_{n+1}\le \zeta \bar e_n.
\ee
Hence an update tolerance proportional to the current accuracy, with sufficiently
small proportionality factor, gives rise to a linear error reduction with
a reduction factor as small as one wishes.
\end{remark}

The significance of these observations lies in the following familiar consequence.
Suppose one has $\bar e_{n+1}\le c \bar e_n^p$ for some $0<c<\infty$ and $p\ge 1$.
Then, triangle inequality yields
\be
\label{triangle}
(1- c \bar e_n^{p-1}) \bar e_n \le \Vert \bar u_{n+1}- \bar u_n\Vert \le 
(1+ c \bar e_n^{p-1}) \bar e_n.
\ee
Thus, for quadratic convergence $p=2$ and any fixed $c<\infty$, and for linear
convergence $p=1$ but $c\le 1/2$ say, one has (for $n$ sufficiently large)
\be
\label{n-n+1}
\frac 12 \bar e_n \le \Vert \bar u_{n+1}- \bar u_n\Vert \le \frac 32 \bar e_n.
\ee
which means that the computable {\em a posteriori} quantity $\Vert \bar u_{n+1}- \bar u_n\Vert $ provides a tight error bound for the current approximation $\bar u_n$.

\subsection{A posteriori estimation of $\bar e_n$}\label{sec:apost}
The remarks at the end of the previous section show that a current error $\bar e_n$
can be assessed through a computable a posteriori quantity provided that
either \eqref{eq:conv-pert-newton-full-induction} or  \eqref{linred} (for sufficiently small $\zeta$) hold. This in turn, hinges on two pillars.
First, one needs to estimate $\bar e_n$ in order to determine a suitable $\eta_n$
(we cannot use \eqref{triangle} at this stage because $\bar u_{n+1}$ has yet to be computed).
Second, once $\bar e_n$ and hence $\eta_n$ is known, one needs to solve
the Newton update problem \eqref{eq:bar-h} with the tolerance $\eta_n$.
We defer this latter issue to the next section and discuss here the first one
through deriving an {\em a posteriori} error bound that does not require the subsequent
approximation. The rationale is, in principle, simple. We have already shown
that the linearization of $R(u,\nu)=0$ is well-posed in a certain neighborhood of
$(\u,\l)$ which can be used as follows.
%
\begin{proposition}
\label{prop:apost}
\corr{There exists a constant $0< \overline{C}<\infty$, depending only on the problem parameters $\Vert \Cop\Vert,\,\bar M$} such that 
\be
\label{tight}
\corr{\bar\beta} \Vert (u,\nu)-(\u,\l)\Vert \le \Vert R(u,\nu)\Vert     \le \overline{C}
\Vert (u,\nu)-(\u,\l)\Vert ,\quad \forall\,\,(u,\nu)\in K_\omega(\u,\l).
\ee
\end{proposition}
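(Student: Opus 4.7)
The plan is to exploit $R(\u,\l)=0$ together with the Lipschitz continuity of $DR$ from Lemma~\ref{lem:Lip} and the uniform bounded invertibility of $DR$ on $\cN$ from Theorem~\ref{thm:main}. For any $(u,\nu)\in K_\omega(\u,\l)\subset\cN$ I will parametrize the segment $\ell(t)\coloneqq(\u,\l)+t\big((u,\nu)-(\u,\l)\big)$, $t\in[0,1]$, and apply the fundamental theorem of calculus to $t\mapsto R(\ell(t))$. Combined with $R(\u,\l)=0$ this yields the identity
\begin{equation*}
R(u,\nu)=\int_0^1 DR(\ell(t))\big((u,\nu)-(\u,\l)\big)\,dt,
\end{equation*}
which is the common starting point for both inequalities in \eqref{tight}.

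For the right-hand (upper) inequality I will simply take norms on both sides and bound $\Vert DR(\ell(t))\Vert$ uniformly for $t\in[0,1]$. The block structure \eqref{Rlin} reveals that $\Vert DR(\bar u,\bar\nu)\Vert$ is controlled by $M_{\bar\nu}$ and by $\Vert\Cop\bar u\Vert$, both of which stay bounded on $K_\omega(\u,\l)$ through \eqref{MbarNu} and \eqref{eq:bounds-C-bar-u}. Since $\omega\le\ett$ by \eqref{omega3}, the resulting constant $\overline{C}$ depends only on $\Vert\Cop\Vert$ and $\bar M$, as claimed.

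For the left-hand (lower) inequality I will split the integrand around $DR(\u,\l)$,
\begin{equation*}
R(u,\nu)=DR(\u,\l)\big((u,\nu)-(\u,\l)\big)+\int_0^1\bigl(DR(\ell(t))-DR(\u,\l)\bigr)\big((u,\nu)-(\u,\l)\big)\,dt,
\end{equation*}
apply $DR(\u,\l)^{-1}$ (whose norm is bounded by $\bar\beta$ by Theorems~\ref{lem:saddle} and~\ref{thm:main}), and estimate the remainder integrand pointwise via the Lipschitz bound \eqref{Lip}, whose contribution is of order $\tfrac{\gamma}{2}\Vert(u,\nu)-(\u,\l)\Vert^2$. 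Taking norms I obtain
\begin{equation*}
\Vert(u,\nu)-(\u,\l)\Vert\le \bar\beta\,\Vert R(u,\nu)\Vert+\tfrac{\bar\beta\gamma}{2}\,\omega\,\Vert(u,\nu)-(\u,\l)\Vert,
\end{equation*}
and the smallness condition \eqref{omega3} on $\omega$ (in particular $\tfrac{\bar\beta\gamma}{2}\omega\le\tfrac{1}{6}$) absorbs the second term on the left, producing the desired lower bound with a constant of order $\bar\beta^{-1}$.

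The main obstacle is really only the bookkeeping for the upper bound: pinning down $\Vert DR(\bar u,\bar\nu)\Vert$ uniformly on $K_\omega(\u,\l)$ in terms of only $\Vert\Cop\Vert$ and $\bar M$. This is handled once \eqref{MbarNu} and \eqref{eq:bounds-C-bar-u} are invoked, together with the triangle-inequality observation $\Vert\Cop u\Vert\le 2+\Vert\Cop\Vert\omega$ valid throughout $K_\omega(\u,\l)$. Everything else is a routine combination of these ingredients.
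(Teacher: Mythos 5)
Your proposal is correct, and for the upper bound it coincides with the paper's argument: both integrate $\phi(t)=R(\ell(t))$ along the segment and bound $\sup_t\Vert DR(\ell(t))\Vert$ uniformly on $K_\omega(\u,\l)$ via \eqref{MbarNu} and the triangle inequality for $\Vert\Cop u\Vert$. For the lower bound, however, you take a genuinely different route. The paper applies the mean value theorem to the vector-valued map $\phi$ to write $R(u,\nu)=DR(\ell(\bar t))\big((u,\nu)-(\u,\l)\big)$ for some intermediate $\bar t$ and then inverts $DR(\ell(\bar t))$ using Theorem \ref{thm:main}; strictly speaking the equality form of the mean value theorem fails for vector-valued maps, and the paper rescues the step by observing that $R$ is quadratic, so that $\int_0^1 DR(\ell(t))\,dt=DR(\ell(1/2))$ exactly. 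Your argument instead linearizes at $(\u,\l)$, inverts only $DR(\u,\l)$, and absorbs the Lipschitz remainder $\tfrac{\bar\beta\gamma}{2}\omega\,\Vert(u,\nu)-(\u,\l)\Vert$ using the smallness condition \eqref{omega3}; this is more robust (it would survive for a non-quadratic $R$) at the price of a slightly degraded constant, $\tfrac{5}{6}\bar\beta^{-1}$ rather than $\bar\beta^{-1}$, in the lower bound. Note in this connection that the constant $\bar\beta$ on the left of \eqref{tight} as printed should be $\bar\beta^{-1}$ — both the paper's derivation and yours produce a constant of order $\bar\beta^{-1}$ — so your statement that the lower bound holds ``with a constant of order $\bar\beta^{-1}$'' is the right reading, even though it does not reproduce the advertised constant verbatim.
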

 \begin{proof}
Arguing as before, for any $(u, \nu)\in K_\omega((\u,\l))$ we introduce the
segment
$$
\ell(t) = (\u,\l) + t ((u, \nu)- (\u,\l) ), \quad \forall t \in [0,1]
$$
and the restriction of $R$ along that segment
$$
\phi (t) = R\circ \ell(t),\quad \forall t\in [0,1].
$$
It follows that
\begin{align}
\label{int}
\Vert R(u, \nu)\Vert 
&= \Vert R(u, \nu)- R(\u,\l)\Vert 
= \Vert \phi(1) - \phi(0) \Vert
= \Vert \int_0^1 \phi'(t)\dt \Vert
\leq \max_{t\in [0,1]} |\phi'(t)|\nonumber\\
&\le \overline{C} \Vert (u, \nu)-(\u,\l) \Vert,
\end{align}
where
$$
\overline{C}\coloneqq\sup_{t\in [0,1]} \Vert DR( (\u,\l)+t(  ( u, \nu) - (\u,\l) ) \Vert
$$
depends on $\bar M, \Vert \Cop\Vert $.

To obtain also a lower bound, we apply the mean value value theorem to $\phi$ over $[0,1]$ to deduce that there exists $\bar t \in (0, 1)$ such that
$$
R(u,\nu) - R(\u,\l)
= \phi(1)-\phi(0)
= \phi'(\bar t)
= DR( \ell(\bar t) )((u, \nu)-(\u,\l)).
$$
Since $\ell(\bar t) \in K_\omega((\u,\l))$, Theorem \ref{thm:main} guarantees injectivity of $DR( \ell(\bar t) )$, from which it follows that
$$
(u, \nu)-(\u,\l) = DR^{-1}( \ell(\bar t)  ) R(u,\nu).
$$
Taking norms and using \eqref{DRbound} yields the lower bound
$$
\Vert (u, \nu)-(\u,\l) \leq \bar \beta^{-1} \Vert R(u,\nu) \Vert.
$$
As a side remark, note that one can actually derive that the exact value of $\bar t$ is $1/2$ by exactly integrating $\phi(1)-\phi(0)=\int_0^1 \phi'(t)\dt$ with the expression for $\phi'(t)$ which reads
$$
\phi'(t)
=DR(\ell(t))((u,\nu)-(\u,\l))
=
\begin{pmatrix}
  (I- (\l + t(\nu-\l))\Cop (u-\u) - \nu\Cop(\u+t(u-\u))\\
  - \<\Cop(\u+t(u-\u)),u-\u)\>
  \end{pmatrix}.
$$
\end{proof}

Proposition \ref{prop:apost} implies that the accuracy provided by each perturbed step can be assessed through $\Vert R(u_n, \lambda_n)\Vert$, and this quantity can be used to steer the target accuracy of subsequent calculations. Hence, at this stage, things boil down to computing accuracy controlled Newton updates. We sketch the essence of a corresponding scheme in the next section, leaving details to forthcoming work with numerical realizations.

\subsection{Towards a Numerical Realization}
\label{ssec:prac}
Recall that in Section \ref{sec:conv-pert-newton} we have identified appropriate
accuracy tolerances and in Section \ref{sec:apost} we have shown how to assess
the accuracy of a given approximation. The objective of this section is to provide the analytical basis for numerically realizing approximate Newton updates 
within a given error tolerance. The basic strategy is to reduce this task to judicious applications   of a routine  
\be
\label{routine2}
(w,\eta)\in \Usp\times \R_+ \mapsto [\Cop,w;\eta] \quad \mbox{such that}
\quad \big\Vert \Cop w - [\Cop,w;\eta]\big\Vert \le \eta.
\ee
This routine, in turn, can be realized with the aid 
of an error controlled application scheme $[\cF,z;\eta]$, (i.e.,  
$\Vert \cF z- [\cF,z;\eta]\Vert \le \eta$) in combination with
 an error controlled solver for the {\em source problem}
$[\cB^{-1},q;\eta]$, providing an $\eta$-accurate approximation to the solution
of $\cB u=q$, i.e., $\Vert u- [\cB^{-1},q;\eta]\Vert \le \eta$.
Routines of the form $[\cF,z;\eta]$ have been discussed in \cite{DGM2020} for
non-trivial kernels, using wavelet compression or low-rank approximation.
The routine $[\cB^{-1},q;\eta]$ is the overall main result of \cite{DGM2020}.
So it is justified for the purpose of the subsequent discussion to claim availability of these two routines.

Then, the following
can be easily verified and will be used repeatedly.
\begin{remark}
\label{rem:routine}
With the routines $[\cF,\cdot;\cdot]$ and $[\cB^{-1},\cdot;\cdot]$ at hand
\be
\label{Creal}
 [\Cop,f;\eta] := [\cB^{-1},[\cF, f;\eta_1];\eta_2]
\ee
satisfies $\Vert [\Cop,f;\eta]-\Cop f\Vert \le \eta$ provided that $\Vert \cB^{-1}\Vert \eta_1+\eta_2\le \eta$.
\end{remark}

There are actually several pathways of employing  $ [\Cop,f;\eta]$. The
subsequent discussion is to shed light on intrinsic obstructions that arise
along the way as well as to line out remedies.

 To this end,
recall from \eqref{spp}, \eqref{mun} that, given the exact Newton iterates $(u_n, \lambda_n)$, the exact Newton updates  can be obtained via block elimination as follows
\begin{equation}
\label{exactNup}
  (\delta_n^{(u)}, \delta_n^{(\lambda)})
  = \corr{-DR}(u_n, \lambda_n)^{-1}R(u_n, \lambda_n)
  \quad
  \Longleftrightarrow
  \quad
  \begin{cases}
  \delta_n^{(u)} &= \delta_n^{(\lambda)} \Mop_{\lambda_n}^{-1} \Cop u_n - u_n \\
  \delta_n^{(\lambda)} &= \dfrac{1+\<\Cop u_n,u_n\> - \Vert \Cop u_n\Vert ^2/2}{\left< \Cop u_n, \Mop_{\lambda_n}^{-1} \Cop u_n\right>}.
  \end{cases}
\end{equation}
A  first natural approach is to realize
the   scheme \eqref{eq:bar-h}  
by approximately computing the quantities in the  right-hand part of \eqref{exactNup}. However, in addition to $[\Cop,f;\eta]$ this would require
a routine: 
for every $f\in \Usp$, $\nu\in \bR$ and target tolerance $\eta>0$,  compute
\wo{approximations to $\Mop_{\lambda_m}^{-1}\Cop u_n$. Since we have an accuracy controlled
of $\Cop$ at hand, it would be natural to develop a routine}
\be 
w_\eta = [\Mop_\nu^{-1}, f; \eta]
\quad  \mbox{such that}  \quad \Vert  w_\eta - \Mop^{-1}_\nu f \Vert \leq \eta.
\label{schemes}
\ee
The obvious problem is that the operator $\Mop_\nu$ becomes increasingly harder
to invert when $\nu$ approaches $\l$, as the values $\lambda_n$ would do.
\wo{We postpone a way of circumventing these difficulties, provided that (a good lower bound for) the spectral gap $\Delta$ (see \eqref{sgap}) is known, to the end of this section.}\\
 
\wo{We address therefore first another} option that takes advantage of Theorem \ref{thm:main}.
In fact, well-posedness of the saddle point problem just means that $DR(\bar u,\bar\nu)$ is a $\Usp$-isomorphism in a neighborhood of $(\u,\l)$, depending on
$\l,\Vert \Cop\Vert$ and \wo{$\theta$}. Hence, errors are equivalent to residuals in the sense that
\begin{align}
\label{err-res}
\Vert (\bar w,\bar\nu)- (\delta^{(\u)}_n,\delta^{(\l)}_n)\Vert ^2 &\eqsim \Vert DR(u_n,\lambda_n)
(\bar w,\bar\nu)+ R(u_n,\lambda_n)\Vert ^2\nonumber\\
&= \Vert \Mop_{\lambda_n} \bar w- \bar \nu \Cop u_n+ \Mop_{\lambda_n}u_n\Vert ^2
+ \Big(1- \frac{\Vert \Cop u_n\Vert ^2}2 -\big\<\Cop u_n,\bar w\big\>\Big)^2.
\end{align}
Hence, we can view the update $(\delta^{(\u)}_n,\delta^{(\l)}_n)$ as the unique minimizer of a {\em quadratic functional}. Note that, just using the   routine 
 \eqref{Creal}, we can compute the quantities
$$
f_n:=\Cop u_n,\quad g_n := \Mop_{\lambda_n}u_n,\quad s_n:= 1- \frac{\Vert f_n\Vert ^2}2
$$
within any desirable accuracy which leaves us with solving
\be
\label{quadratic}
(\bar\delta^{(u)}_n,\bar\delta^{(\lambda)}_n)= \argmin_{\bar w,\bar \nu}Q(\bar w,\bar \nu),\quad Q(\bar w,\bar \nu):=\frac 12\Big\{
\Vert \Mop_{\lambda_n}\bar w -\bar\nu f_n +g_n\Vert ^2 +
\big(s_n - \<f_n,\bar w\>\big)^2\Big\}.
\ee
Note that the exact update component $\delta^{(u)}_n$ belongs to $\<\Cop u_n\>^\perp$. We know from Lemma \ref{lem:pert-Cu} that $\Mop_{\lambda_n}$
is well-conditioned on $\<\Cop u_n\>^\perp$. Hence, a gradient descent in $\<\Cop u_n\>^\perp$ converges rapidly towards the minimizer. So, we do not suggest to minimize $Q$ over a fixed finite-dimensional trial space but seek to
realize an accuracy controlled gradient descent in function space.

To that end, since $2Q(\bar w,\bar \nu)= \Vert \Mop_{\lambda_n}\bar w-\bar\nu f_n\Vert ^2 +2\big\<\Mop_{\lambda_n}\bar w-\bar\nu f_n,g_n\big\>+ \Vert g_n\Vert ^2
+ s_n^2 +\<f_n,\bar w\>^2 -2s_n\<f_n,\bar w\>$, it suffices to minimize
$$
P(\bar w,\bar\nu) := \frac 12 \Vert \Mop_{\lambda_n}\bar w-\bar\nu f_n\Vert ^2
+ \big\<\Mop_{\lambda_n}\bar w-\bar\nu f_n,g_n\big\>+ \frac 12 \<f_n,\bar w\>^2 -\corr{s_n\<f_n,\bar w\>}.
$$
\corr{Although $\nabla P(\bar w,\bar \nu)$ can be determined along the same lines, as
stated earlier for   $DR$, we repeat the specific calculation for the convenience of the
reader.}
Straightforward manipulations yield
\begin{align*}
2\big(P(\bar w+ t h,\bar\nu + t\omega) - P(\bar w,\bar\nu)\big)&
=   t^2\big\<\Mop_{\lambda_n} h-\omega f_n,\Mop_{\lambda_n} h-\omega f_n\big\> +2t \big\<\Mop_{\lambda_n} h-\omega f_n,\Mop_{\lambda_n}\bar w-\bar\nu f_n \big\>\\
&\quad + 2t\big\<\Mop_{\lambda_n} h-\omega f_n,g_n\big\> + t^2\<f_n,h\>^2+ 2t\<f_n,\bar w\>
\<f_n,h\> - 2s_nt\<f_n,h\>.
\end{align*}
Thus
\begin{align*}
\lim_{t\to 0}\frac 1t \Big\{P(\bar w+ t h,\bar\nu + t\omega) - P(\bar w,\bar\nu)\Big\}
&= \big\<\Mop_{\lambda_n} h-\omega f_n,\Mop_{\lambda_n}\bar w-\bar\nu f_n +g_n\big\>
 +(\<f_n,\bar w\>-s_n)\<f_n,h\> \\
&= \big\< \Mop_{\lambda_n}^*(\Mop_{\lambda_n}\bar w-\bar\nu f_n+g_n)
+(\<f_n,\bar w\>-s_n)f_n,h\big\>\\
&\qquad-\omega \big\<f_n, \Mop_{\lambda_n}\bar w-\bar\nu f_n+g_n\big\>.
\end{align*}
Hence,
$$
\big\<\nabla P(\bar w,\bar \nu),(h,\omega)\big\> = \left\<{
\Mop_{\lambda_n}^*(\Mop_{\lambda_n}\bar w-\bar\nu f_n+g_n)
+(\<f_n,\bar w\>-s_n)f_n\choose - \big\<f_n, \Mop_{\lambda_n}\bar w-\bar\nu f_n+g_n\big\>},{h\choose \omega}\right\>,
$$
which says that 
\be
\label{steep}
D(\bar w,\bar\nu):= -\nabla P(\bar w,\bar\nu)= 
{
-\Mop_{\lambda_n}^*(\Mop_{\lambda_n}\bar w-\bar\nu f_n+g_n)
+(\<f_n,\bar w\>-s_n)f_n\choose  \big\<f_n, \Mop_{\lambda_n}\bar w-\bar\nu f_n+g_n\big\>}
\ee
is the direction of steepest descent in $\Uspext=\Usp\times \R$. 
In the light of the preceeding remarks, since we know that $\delta^{(u)}_n\in 
\<\Cop u_n\>^\perp$, a natural initialization
for such a gradient descent scheme would be
$$
\bar \omega^0 := \Cop u_n- P_{\<\Cop u_n\>}u_n \in \<\Cop u_n\>^\perp,\quad 
\bar\nu^0=0.
$$
Then $D(\bar w^0,\bar\nu^0)$ simplifies somewhat to
\be
\label{simpler}
D(\bar w^0,\bar\nu^0):= -\nabla P(\bar w,\bar\nu)= 
{
-\Mop_{\lambda_n}^*(\Mop_{\lambda_n}\bar w^0-\bar\nu f_n+g_n)
-s_nf_n\choose  \big\<f_n, \Mop_{\lambda_n}\bar w-\bar\nu f_n+g_n\big\>}
={D_1(\bar w^0,\bar\nu^0)\choose D_2(\bar w^0,\bar\nu^0)}.
\ee
The first component does perhaps not belong to $\<\Cop u_n\>^\perp$
but is close to. This suggests taking
$$
{\bar w^1\choose\bar\nu^1}={\bar w^0\choose \bar\nu^0}+ \xi{P_{\<\Cop u_n\>^\perp} D_1(\bar w^0,\bar\nu^0)
\choose D_2(\bar w^0,\bar\nu^0)}, 
$$
for a \corrw{suitable} step-size $\xi>0$,
and repeat  the step based on \eqref{simpler}. One then obtains analogous update formulas
for ${\bar w^k\choose\bar\nu^k}$. In fact, one could even determine 
(at the expense of further approximate applications of $\Cop, \Cop^*$) an optimal stepsize but we skip corresponding details.

Executing such a descent strategy requires only the scheme $[\Cop,f;\eta]$
(as well as a similar variant $[\Cop^*,f;\eta]$).
Without going into further details (deferred to forthcoming numerical work)
the update tolerances in those applications need to be {\em fixed fractions} of the
target accuracy $\eta_n$ in \eqref{relaxeta} whith $\zeta$ depending on which
convergence strategy is being pursued, see \eqref{linred} or \eqref{eta-n-rule}.

Finally, whether the target accuracy has been met for a given iterate ${\bar w^k\choose\bar\nu^k}$ can be checked by evaluating the residual $Q(\bar w^k,\bar\nu^k)$ from \eqref{quadratic}. Therefore, this strategy allows one, in principle, to realize the Newton update \eqref{eq:approx-inv-DR} 
in an accuracy controlled fashion. Due to the controlled condition of $\Mop_{\lambda_n}$ on $\<\Cop u_n\>^\perp$ an error reduction by a fixed factor,
required by \eqref{linred}, will be achieved after a uniformly bounded
number of descent steps while strategy \eqref{eta-n-rule} would require 
the order of $|\log \bar e_n|$ steps.\\ 

\wo{As announced above, returning to \eqref{exactNup}, we briefly sketch now an alternate approach towards an error controlled
approximation of $\Mop_{\bar\lambda}^{-1}\Cop\bar u$ when $\Delta$ is known and
$(\bar\lambda,\bar u)$ is already an accurate approximation to the principal eigenpair $(\l,\u)$. To that end, it will be convenient to rewrite
\begin{equation}\label{sgap2}
\oDelta = \m | \nu -\m |,\quad \m=(\l)^{-1}, \nu= 
\argmax \{|\mu|: \mu \in \sigma(\Cop)\setminus =\{\m\}
\}.
\end{equation}
Specifically, assume that (e.g. by the above technique or with the aid of the 
scheme discussed in the next section) $\bar\mu = \bar\lambda^{-1}$ already
satisfies $|\bar\mu -\m| \le \l\oDelta/8$, say. Consider a disc $\Omega\subset\bC$
with boundary $\Gamma$, center $\bar\mu$ and radius $\l\oDelta/2$. Thus $\Gamma$ keeps a distance at least $3\l\oDelta/8$ from $\m$ and $\nu$ hence from $\sigma(\Cop)$, while
$|\Gamma|=  \pi \l\oDelta$. Writing
$$
\Mop_{\bar\lambda}^{-1}\Cop = \bar\mu (\bar\mu\id -\Cop)^{-1}\Cop,
$$
classical functional calculus says that
\be
\label{contour}
\Mop_{\bar\lambda}^{-1}\Cop \bar u = \frac{1}{2\pi i}\int_\Gamma  \frac{\bar\mu\zeta}{\bar\mu -\zeta}(\id\zeta- \Cop)^{-1}\bar u d\zeta,
\ee
(see   Section \ref{ssec:spec}).
By construction, the resolvent $(\zeta\id-\Cop)^{-1}$ possesses a holomorphic extension to
a symmetric strip $A$ of width $3\l\Delta/8$ around $\Gamma$. It is well known (see e.g. \cite{Stenger}) that the trapezoidal rule provides then an approximation to the above
contour integral with exponential accuracy. Specifically, consider equidistant quadrature points $\zeta_j\in\Gamma$, $j=1,\ldots,N_\e$, and 
\be
\label{QN}
I_{h}(\bar u):=\frac{h}{2\pi i}\sum_{j=1}^{\pi \l\Delta/h} \frac{\bar\mu \zeta_j}{\bar\mu-\zeta_j}(\id\zeta_j-\Cop)^{-1}\bar u  .
\ee
Then, employing $N = \pi \l\oDelta/h$ quadrature points (see e.g. \cite{Jurgens2005, Jurgens2006, DJ})
$$
\big\|I_{h}(\bar u)- \Mop_{\bar\lambda}^{-1}\Cop \bar u\big\|_{\U} \le C e^{-\pi 3\l\oDelta /h8} = C e^{-3N/8}.
$$
Thus, given the exact states $(\id\zeta_j-\Cop)^{-1}\bar u$, a constant multiple of
$N_\e\eqsim |\log \e|$ quadrature points suffice to realize accuracy $\e$. Hence, if in addition one is able to generate approximations
$r_j(\bar u)\approx (\id\zeta_j-\Cop)^{-1}\bar u$ of order $\e$, i.e.,
$$
\Big\|I_{h_\e}(\bar u)- \frac{h_\e}{2\pi i}\sum_{j=1}^{N_\e} \frac{\bar\mu \zeta_j}{\bar\mu-\zeta_j}r_j(\bar u)\Big\|_{\U}\lesssim \e,
$$
\worr{one generates approximations to $\Mop_{\bar\lambda}^{-1}\Cop \bar u$ of order $\e$.}
This leaves the task of approximately solving the $|\log\e|$ operator equations
\be
\label{Cops}
(\zeta_j\id - \Cop)w_j= \bar u,\quad j=1,\ldots, N_\e.
\ee
The principal gain is that in these equations the $\zeta_j$ remain uniformly 
away from $\sigma(\Cop)$ (with a distance of order $\m\Delta$) and remain in this sense
well conditioned. Moreover, \eqref{Cops} is equivalent to solving
$$
(\Bop -\zeta_j^{-1}\Fop )w_j = \zeta_j^{-1}\bar u,
$$
which has a similar structure as the original source problem, now with a modified global part, hence should be amenable
to an adapted version of $[\Bop^{-1},f;\eta]$. Details are left to forthcoming work.}
\section{\wo{Initialization Strategies and Power Iteration}}
\label{sec:inverse-power-it}
 A fully certified numerical realization of (CC1) - (CC3), based on Newton's method as idealized iteration, requires an initial guess in the {\em admissible} neighborhood
specified in Theorem \ref{thm:convergence}. First, this requires knowledge of the quantities 
\wo{$\|\Cop\|, \theta , \l, \u$, which are generally  not known. We recall that $\theta$ is small when the spectral gap $\Delta$ is small, see \eqref{sandwich}, \eqref{sgap}}.  Therefore, the overarching objective in this section is to approximate these quantities, in principle, as accurately as one wishes,
without the need of a sufficiently close initial guess. This calls for
a {\em globally convergent} idealized iteration in (CC1).

A natural idea  is then
to employ a {\em power iteration}, as an idealized iteration in (CC1). 
To be specific, let $\wor{a^\circ_0} \in \Usp$ with $\Vert \wor{a^\circ_0}\Vert =1$,
$\alpha :=\<\wor{a^\circ_0},\ug_1\>\neq 0$ be an initial guess. Then,  define
\begin{equation}
\label{power}
a_{n+1}:= \Cop \wor{a^\circ_n},\quad \wor{a^\circ_{n+1}}:= \frac{a_{n+1}}{\Vert a_{n+1}\Vert },\quad n=0,1,2,\ldots.
\end{equation}
The associated Rayleigh quotients are
\begin{equation}
\label{ray}
\rho_{n+1}:= \<\Cop \wor{a^\circ_n},\wor{a^\circ_n}\>,\quad n=0,1,2,\ldots.
\end{equation}
Again, in the spirit of previous discussions, we proceed
in two stages: the first goal is
  to understand  quantitative convergence characteristics of 
this idealized power method, formulated in this section, we focus on this task. 
With quantitative convergence results on these iterations at hand, one would 
contrive in a second step approximate numerical approximations of the 
idealized iteration obeying suitable accurracy tolerances that ensure 
convergence to the correct principal eigenpair. Since for the power-method this
second step relies in essence on the ability to have an accuracy controlled 
application of $\Cop$ we dispense with corresponding details that in spirit follow the same lines as in previous sections but rather focus on the convergence
of \eqref{power} and \eqref{ray} in the infinite-dimensional setting.

The convergence of the power method for a general (non-normal) compact operator is already of interest in its own right since there does not seem
to be much known, if anything at all. One reason is perhaps that the classical convergence proof for finite dimensional matrices
is intrinsically finite dimensional and does not carry over to the infinite-dimensional case. For once, the concept of diagonalizability is far too restrictive. Moreover,
for a fixed finite dimension an eigenbasis is automatically stable but its condition
may depend on the dimension unless one is dealing with a normal matrix.
For the present type of operator in infinite dimension, neither the notion of eigenbasis is
appropriate nor are we lacking the necessary stability of an expansion system.
The resulting main result reads as follows.
\begin{theorem}
\label{thm:power}
For $\Cop$, defined in \eqref{Cop}  the power iteration \eqref{power}, \eqref{ray} converges \wo{linearly}
to the principal eigenpair $(\ug_1, \mu_1)\in \tilde\Usp$, i.e.,  for any $1>\bar\delta>1-\uDelta$ there exists a constant
$C<\infty$ such
 the quantities $a^\circ_n, \rho_n$,
defined in \eqref{power} and \eqref{ray}, satisfy
\be
\label{powerconv}
\Vert \ug_1- \wor{a^\circ_n}\Vert \le C\bar\delta^n,\quad |\mu_1- \rho_n|\le C\bar\delta^n,\quad n\in\N.
\ee
The constant $C$ depends on $\sigma(\Cop)$ and may be arbitrarily large. If in addition
one assumes $\Cop$ belongs to the Schatten class $S_p$ of compact operators on $\U$
  for some $p<\infty$,  the  constant $C$ in \eqref{powerconv} can be quantified in terms of $\wor{a^\circ_0}$, $\|\Cop\|_p$, and 
   $ \uDelta$.
\end{theorem}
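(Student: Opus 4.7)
The plan is to reduce the power iteration to a geometric contraction on the complementary $\Cop$-invariant subspace via the Riesz decomposition \eqref{Rdeco}. First, I would decompose
\[
a^\circ_0 = \alpha_0\,\ug_1 + r_0,\qquad \alpha_0\,\ug_1 := \Eop_{\Cop}(\mu_1)\,a^\circ_0 \in \<\ug_1\>,\quad r_0 := \Eop_{\Cop}\big(\sigma(\Cop)\setminus\{\mu_1\}\big)\,a^\circ_0 \in \Vsp,
\]
where simplicity of $\mu_1$ (Theorem \ref{prop;pso}) makes the principal eigenspace one-dimensional, and the non-degeneracy of the initial guess ensures $\alpha_0 \neq 0$. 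Since $\Vsp$ is $\Cop$-invariant and $\Cop \ug_1 = \mu_1 \ug_1$, iterating produces $\Cop^n a^\circ_0 = \alpha_0\,\mu_1^n\,\ug_1 + \Cop^n r_0$ with $\Cop^n r_0 \in \Vsp$.

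The central issue is to bound $\Vert \Cop^n|_{\Vsp}\Vert$. Since $\sigma(\Cop|_{\Vsp}) = \sigma(\Cop)\setminus\{\mu_1\}$ has spectral radius $\rat\,\mu_1 = (1-\uDelta)\mu_1$ by \eqref{rat}, \eqref{sgap}, Gelfand's spectral-radius formula applied to $\Cop|_{\Vsp}$ yields, for every $\bar\delta > 1-\uDelta$, a constant $C_1 = C_1(\bar\delta,\Cop) < \infty$ with $\Vert \Cop^n|_{\Vsp}\Vert \le C_1\,(\bar\delta\,\mu_1)^n$. Setting $\tilde\ug_1 := \mathrm{sign}(\alpha_0)\,\ug_1/\Vert\ug_1\Vert$ and applying the elementary bound $\Vert v/\Vert v\Vert - \tilde\ug_1\Vert \le 2\,\Vert v - c\,\tilde\ug_1\Vert/\Vert v\Vert$ with $c := |\alpha_0|\,\mu_1^n\,\Vert\ug_1\Vert$ and $v := \Cop^n a^\circ_0$, one obtains, as soon as $n$ is large enough that $\Vert \Cop^n r_0\Vert \le c/2$,
\[
\Vert a^\circ_n - \tilde\ug_1\Vert \;\le\; \frac{4\,\Vert \Cop^n r_0\Vert}{c} \;\le\; \frac{4\,C_1\,\Vert r_0\Vert}{|\alpha_0|\,\Vert\ug_1\Vert}\,\bar\delta^n.
\]
The Rayleigh-quotient error then splits, using $\Cop\tilde\ug_1 = \mu_1\tilde\ug_1$, as $\rho_{n+1} - \mu_1 = \<\Cop(a^\circ_n - \tilde\ug_1),\,a^\circ_n\> + \mu_1\,\<\tilde\ug_1,\,a^\circ_n-\tilde\ug_1\>$, and is controlled by $(\Vert\Cop\Vert + \mu_1)\,\Vert a^\circ_n - \tilde\ug_1\Vert \le C\bar\delta^n$, establishing \eqref{powerconv} after the unit-norm renormalisation of $\ug_1$.

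The main obstacle, and the reason the Schatten-class refinement is needed, is quantifying $C_1$: for a general non-normal compact operator, Gelfand's constant can be arbitrarily large due to pseudospectral transient growth, and no eigenbasis with quantitative stability is available — exactly the difficulty flagged in the paper's introduction. To make $C_1$ explicit under $\Cop \in S_p$, I would use the functional-calculus representation
\[
\Cop^n|_{\Vsp} \;=\; \frac{1}{2\pi i}\oint_{\Gamma}\zeta^n\,(\zeta\,\id - \Cop)^{-1}\,d\zeta,
\]
with $\Gamma$ a circle of radius $\bar\delta\,\mu_1$ enclosing $\sigma(\Cop)\setminus\{\mu_1\}$ but excluding $\mu_1$; this contour is admissible because the Riesz projection onto $\Vsp$ is produced by the same contour and commutes with $\Cop$. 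For $\Cop\in S_p$ the Carleman regularised determinant $\det_p(\id - \zeta^{-1}\Cop)$ provides both a quantitative lower bound on $|\det_p|$ along $\Gamma$ in terms of $\dist(\Gamma,\sigma(\Cop)\setminus\{\mu_1\})$ and the Carleman inequality $\Vert(\zeta\,\id - \Cop)^{-1}\Vert \le |\det_p(\id-\zeta^{-1}\Cop)|^{-1}\exp\!\big(c_p\,\Vert\Cop\Vert_p^p\,|\zeta|^{-p}\big)$. Integrating along $\Gamma$ (length $2\pi\bar\delta\mu_1$) then produces an explicit $C_1$ depending only on $\Vert\Cop\Vert_p$, the gap $\bar\delta - (1-\uDelta)$, and $\mu_1$, while the final constant $C$ in \eqref{powerconv} inherits the initial-data factor $\Vert r_0\Vert/(|\alpha_0|\,\Vert\ug_1\Vert) \le \Vert a^\circ_0\Vert/|\alpha_0|$.
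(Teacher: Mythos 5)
Your proposal is correct and follows essentially the same route as the paper: the Riesz-projection splitting $a^\circ_0=\alpha_0\ug_1+r_0$, a geometric bound on $\Cop^n|_{\Vsp}$ obtained from the contour representation of powers on the complementary invariant subspace (the paper's Lemmas \ref{lem:Ck}--\ref{lem:Ck2}, where Gelfand's formula plays the role of your unquantified constant $C_1$), and the Carleman-type resolvent estimate \eqref{Ndev} to make the constant explicit in the Schatten case. The only step you gloss over is the quantitative lower bound on $|\det_p|$ along $\Gamma$ in terms of the distance to the spectrum --- this is exactly the nontrivial content of the cited resolvent bound, which the paper simply invokes rather than re-derives.
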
 

For the convenience of the reader we recall below the precise definition of the Schatten class $S_p$ and a few known relevant facts.
 Membership to a Schatten class could perhaps be viewed as replacing the knowledge of
the condition of an eigenbasis in the matrix case. 

 As a substitute for eigen-expansions in the matrix case we resort to   classical functional calculus and Riesz projections, see Section \ref{ssec:spec}. 

Exploiting the bounds \eqref{powerconv} for step (CC2), i.e., for an accuracy controlled
approximate execution of \eqref{power} and \eqref{ray} not only requires an estimate of the constant $C$ in \eqref{powerconv} but also a quantitative (hopefully sharp) lower
bound of the spectral gap $\Delta$. The importance of knowing such an estimate has already transpired in previous sections. 

In summary, it seems that  intrinsic difficulties of numerically solving spectral problems with certified accuracy for the current scope of unsymmetric compact operators 
 can be reduced to some extent to assessing $\Delta$ (and this fact is actually  confirmed  by works such as \cite{ben2015computing}). 
Therefore, we highlight in Section \ref{ssec:ram} the difficulties in estimating $\Delta$
and the role of heuristics in gaining computational information on $\uDelta$.

The statements in Theorem \ref{thm:power} reflect that we are not able to quantify
  convergence   under the mere assumption of compactness when $\Cop$ is not normal.
 As we will see later below the difficulty is to quantify resolvent bounds of the
 type $\max_{\zeta\in \Gamma}\|\cR_{\Cop}(\zeta)\|$  in terms of the distance
 of the contour $\Gamma$ from the spectrum. In fact, 
 in the most favorable situation
that $\Cop$ is a {\em normal} operator, it is known that  
\be
\label{ideal}
\Vert \Rop_{\Cop_\omega}(\zeta)\Vert \le \big(\inf_{\zeta'\in \Gamma_\omega}|\zeta-\zeta'|\big)^{-1}:=\corr{d(\zeta,\Gamma_\omega)^{-1}},
\ee
while it is known that  
\eqref{ideal} is generally not true when $\Cop$ is not normal.
To see this, we recall
the following version of Schur's Lemma that 
ensures for every compact operator $\Aop$ the existence 
of a decomposition  $\Aop=D+N$ (\cite[Theorem 3.2]{Carleman}) where $D,N$
are compact, $D$ is normal $\sigma(\Aop)=\sigma(D)$, and $N$ is quasi-nilpotent, i.e., $\sigma(N)=\{0\}$. Then powers of $\Aop$ are of the form
$$
\Aop^k = D^k + B_k,
$$ 
where $B_k$ is a polynomial in $D$ and $N$ of degree $k$. Although, by compactness, properly rescaled
powers of $N$ will (eventually) tend to zero it is not so clear what the right scaling for the
mixed terms in $B_k$ should be in order to produce a {\em quantitative} damping of $B_k$
as $k$ tends to infinity. In fact, the deviation of normality of $\Aop$ reflected
by $N$ will play later a role in the proof of Theorem \ref{thm:power}. 

The deviation from normality can indeed be quantified when  $\Cop$  
   belongs to some {\em Schatten class}. 
We briefly recall this concept
for a general compact operator $\Aop$ on a Hilbert space $H$. Let  
 $S_\infty$
denote the set of all compact operators on $H$. Then the $p$-th Schatten class
is defined as \corrw{(see e.g. \cite{Carleman,Dunford2})}
$$
S_p:= \Big\{\Aop\in S_\infty: \Vert \Aop\Vert _p := \Big(\sum_{j=1}^\infty s_j(\Aop)^p \Big)^{1/p}<\infty\Big\},
$$
where the $s_j(\Aop)$ are the singular values of $\Aop$.  Obviously,
every finite rank operator belongs to every $S_p$, $p\in (0,\infty]$.
In general, the larger $p$ the weaker the condition. 

For later use in Section \ref{ssec:proof} we invoke now some results from \cite{Carleman} which we briefly formulate
again in general terms for a general compact operator $\Aop$ on a Hilbert space $H$.  
Recall the Schur decomposition $\Aop=D+N$ where $D$ is normal with
$\sigma(\Aop)=\sigma(D)$ and $N$ quasi-nilpotent. 
Since 
\corr{$(z\Iop- \Aop)^{-1} = (\Iop-(z\Iop-D)^{-1}N)^{-1}(z\Iop-D)^{-1}$}
 one gets
 $$
 \Vert (z\Iop- \Aop)^{-1}\Vert 
 \le \Vert (\Iop-(z\Iop-D)^{-1}N)^{-1}\Vert \Vert (z\Iop-D)^{-1}\Vert 
 \le \Vert (\Iop-(z\Iop-D)^{-1}N)^{-1}\Vert  d(z,\sigma(D))^{-1},
 $$
 ending up with the task of bounding $\Vert (\Iop-(zI-D)^{-1}N)^{-1}\Vert $. This is 
 done in \cite[Theorem 4.1]{Carleman} providing for $\Aop\in S_p$, $p<\infty$,
 \be
 \label{Ndev}
 \Vert (z\Iop- \Aop)^{-1}\Vert \le d(z,\sigma(\Aop))^{-1}\exp\Big\{\frac{a_p 2^{1+(p-1)_+}\Vert \Aop\Vert _p^p}{d(z,\sigma(\Aop))^p}+b_p\Big\},
 \ee
$a_p,b_p$ constants depending only on $p$. 
\begin{remark}
\label{ap}
{\rm (see \cite[Remark 2.2]{Carleman})} For certain values of $p$ the constants
$a_p,b_p$ are known:
\begin{enumerate}
\item
$p=1$: $a_1=1, \,b_1=0$;
\item
$p=2$: $a_2=\frac 12 = b_2$.
\item
For $0<p\le 1$ a possible choice is
\be
\label{p<1}
a_p=\sup_{z\in\bC}|z|^{-p}\log|(1+z)|,\quad b_p=0,
\ee
while for $p>1$ one can take \corr{for $a_p$} any real number larger than
\be
\label{pge1}
\underline{p}:= \sup_{z\in\bC}\log\Big|(1+z)\exp\Big\{\sum_{j=1}^{\lceil p\rceil-1}
\frac{(-z)^j}j\Big\}\Big|,
\ee
\corr{with $b_p\neq 0$ depending on $a_p$.}
\end{enumerate}
\end{remark}
%
\newcommand{\Omi}{{\Omega_\e}}
\newcommand{\Gami}{{\Gamma_\e}}

\subsection{\wo{Proof of Theorem \ref{thm:power}}}\label{ssec:proof}
\newcommand{\bdel}{\overline{\delta}}
We take up again the notation from Section \ref{ssec:spec}
for the spectrum
$$
\sigma(\Cop)= \{\mu_j:j=1,\ldots,\infty\},
$$
where $|\mu_j|$ decreases with increasing $j\in\N$ (recall the correspondence 
$\mu_1= \mu^\circ= (\l)^{-1}$, $\ug^\circ=\ug_1$). 
Note that the spectral gap \eqref{sgap} can equivalently be written as
$$
\wo{\oDelta = \Big|1-  \frac{\mu_2 }{\mu_1}\Big|\ge 1- \frac{|\mu_2|}{\mu_1}}=\uDelta.
$$
\corr{Finally, it will be convenient to assume in what follows the normalization $\|\ug_1\|=1$ for the principal eigenstate 
$\ug_1$.}

The proof of Theorem \ref{thm:power} is based on   
 analyzing the action of $\Cop$
on  invariant subspaces of the form $V_{\omega}:= \corrw{\Eop}_{\Cop}(\omega)\Usp$
for subsets $\omega \subset \sigma(\Cop)$. The following
facts are probably standard but
  we include them for completeness.
\begin{lemma}
\label{lem:Ck}
(a) For $\omega\subset \sigma(\Cop)$
let $\Cop_\omega := \Cop|_{V_\omega}$ denote the restriction of $\Cop$ to $V_\omega$
(recall that $\Cop V_\omega = V_\omega$). We have 
\be
\label{Resk0}
\Rop_{\Cop}(\zeta)\!\mid_{V_\omega}= \Rop_{\Cop_\omega}(\zeta),
\ee
and  the representation 
\be
\label{Ck}
\Cop^\ell_\omega =
 \frac{1}{2\pi i}\int_{\Gamma_\omega}\zeta^\ell\Rop_{\Cop_\omega}(\zeta)\,d\zeta,
\ee
holds for every $\ell\in \N$
where, as before, $\Gamma_\omega=\partial\Omega$ for some domain $\Omega\subset\bC$ 
such that $\omega\subset\Omega$, $\Omega \cap (\sigma(\Cop)\setminus \omega)=\emptyset$.\\[1.5mm]
(b) Moreover, we have 
\be
\label{CE0}
\Vert \Cop^\ell \ug \Vert \le \frac{|\Gamma_\omega|}{2\pi}\max_{\zeta\in\Gamma_\omega}|\zeta|^\ell
\Vert \Rop_{\Cop_\omega}(\zeta)\ug\Vert ,\quad \forall\, \ug\in V_\omega,
\ee
where $|\Gamma_\omega|$ denotes the length of $\Gamma_\omega$.
\end{lemma}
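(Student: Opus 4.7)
The plan is to derive both parts from the classical Dunford--Riesz holomorphic functional calculus applied to the restriction $\Cop_\omega$. The key preparatory observation is that $V_\omega = \Eop_{\Cop}(\omega)\Usp$ is $\Cop$-invariant (already recorded in \eqref{invariant}), and that for every $\zeta \in \rho(\Cop)$ the resolvent $\Rop_{\Cop}(\zeta)$ commutes with $\Eop_{\Cop}(\omega)$. The latter commutation follows because $\Eop_{\Cop}(\omega)$ is itself a contour integral of $\Rop_{\Cop}(\cdot)$ (see \eqref{Riesz0}), combined with the standard Hilbert resolvent identity $\Rop_{\Cop}(\zeta)\Rop_{\Cop}(\zeta') = \Rop_{\Cop}(\zeta')\Rop_{\Cop}(\zeta)$.

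From that commutation I would deduce $\Rop_{\Cop}(\zeta) V_\omega \subseteq V_\omega$, so that the restriction of $\Rop_{\Cop}(\zeta)$ to $V_\omega$ is a two-sided inverse of $\zeta\id - \Cop_\omega$ on $V_\omega$. This yields \eqref{Resk0} and, as a by-product, $\rho(\Cop) \subseteq \rho(\Cop_\omega)$, hence $\sigma(\Cop_\omega) \subseteq \sigma(\Cop)$. Applying the same reasoning to the complementary Riesz projection $\Eop_{\Cop}(\sigma(\Cop)\setminus\omega)$ together with the direct sum decomposition \eqref{directsum} refines this to $\sigma(\Cop_\omega) \subseteq \omega$, so that the contour $\Gamma_\omega$ indeed encloses the spectrum of $\Cop_\omega$.

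Having secured this, the Dunford--Riesz calculus on the Hilbert subspace $V_\omega$ gives
\[
f(\Cop_\omega) \;=\; \frac{1}{2\pi i}\int_{\Gamma_\omega} f(\zeta)\,\Rop_{\Cop_\omega}(\zeta)\,d\zeta
\]
for every $f$ holomorphic in a neighborhood of $\overline{\Omega}$, and specializing to the entire function $f(\zeta) = \zeta^\ell$ produces the formula \eqref{Ck}. Part (b) then falls out by the standard ML--inequality for Banach-space valued contour integrals: for $u \in V_\omega$ one has $\Cop^\ell u = \Cop_\omega^\ell u$ by invariance, and hence
\[
\|\Cop^\ell u\| \;\le\; \frac{1}{2\pi}\int_{\Gamma_\omega} |\zeta|^\ell\,\|\Rop_{\Cop_\omega}(\zeta)u\|\,|d\zeta| \;\le\; \frac{|\Gamma_\omega|}{2\pi}\,\max_{\zeta\in\Gamma_\omega}\bigl(|\zeta|^\ell\,\|\Rop_{\Cop_\omega}(\zeta)u\|\bigr),
\]
which is \eqref{CE0}.

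The only genuine obstacle I anticipate is conceptual rather than computational: one must explicitly verify the spectral localization $\sigma(\Cop_\omega) \subseteq \omega$ (not merely $\subseteq \sigma(\Cop)$), since this is precisely what legitimizes invoking the holomorphic functional calculus along the contour $\Gamma_\omega$. Both this localization and the commutation of the Riesz projections with all resolvents $\Rop_{\Cop}(\zeta)$ are classical facts, but the whole proof rests squarely on them, so I would state and use them explicitly rather than leave them implicit in a single reference to ``functional calculus''.
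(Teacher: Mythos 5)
Your proof is correct, and it rests on the same two pillars as the paper's argument --- Riesz projections and the Dunford--Riesz holomorphic functional calculus --- but the individual steps are carried out differently. For \eqref{Resk0} the paper expands $(\zeta\Iop-\Cop_\omega)^{-1}\ug$ as a Neumann series for $|\zeta|>\Vert\Cop\Vert$, observes that it coincides with $(\zeta\Iop-\Cop)^{-1}\ug$ for $\ug\in V_\omega$, and then extends the identity to all of $\rho(\Cop)$ by holomorphy of the resolvent; you instead derive invariance of $V_\omega$ under $\Rop_{\Cop}(\zeta)$ from the commutation of the resolvent with $\Eop_{\Cop}(\omega)$ and identify the restricted resolvent as a two-sided inverse of $\zeta\id-\Cop_\omega$. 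Both routes are standard and equally rigorous. For \eqref{Ck} the paper first invokes the functional calculus on an enlarged contour $\Gamma'$ and then collapses the resulting double contour integral onto $\Gamma_\omega$ by Hilbert's resolvent identity and residue calculus, whereas you apply the polynomial functional calculus on $V_\omega$ directly along $\Gamma_\omega$ after verifying the spectral localization $\sigma(\Cop_\omega)\subseteq\omega$. Your route is shorter but leans on that localization as an external fact (which the paper itself only asserts with ``clearly''), and your insistence on making it explicit is well placed: when $\omega$ avoids $0$ it is a finite set of nonzero eigenvalues of the compact operator $\Cop$, so $V_\omega$ is finite-dimensional and $\sigma(\Cop_\omega)=\omega$ exactly, while if $0\in\omega$ then $\Gamma_\omega$ automatically encircles the point $0\in\sigma(\Cop_\omega)$, so the calculus applies in either case. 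Part (b) is handled identically in both proofs via the ML-inequality applied to \eqref{Ck}.
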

\begin{proof} By the projection property of $\Eop_{\Cop}$ we have
for every $\ug\in V_\omega$ that 
$$
\ug = \Eop_{\Cop}(\omega)\ug = \frac{1}{2\pi i}\int_{\Gamma_\omega}\Rop_{\Cop}(\zeta)\ug\,d\zeta.
$$
When $|\zeta|>\Vert \Cop\Vert \ge \Vert \Cop_\omega\Vert $ we have for any $\ug\in V_\omega$
\begin{align}
\label{Resk}
 (\zeta\Iop -\Cop_\omega)^{-1}\ug &=  \zeta^{-1}  (\Iop - \zeta^{-1}\Cop_\omega)^{-1}\ug = \zeta^{-1} \sum_{j=0}^\infty
\frac{ \zeta^{-j} \Cop_\omega^j\Eop_{\Cop}(\omega)^j\ug }{j!}\nonumber\\
&=
\zeta^{-1} \sum_{j=0}^\infty
\frac{ \zeta^{-j} \Cop^j\ug }{j!} = (\zeta\Iop-\Cop)^{-1}\ug.
\end{align}
Since the resolvent is holomorphic on $\rho(\Cop)$ the above agreement
must hold for all $\zeta\in \rho(\Cop)$, confirming \eqref{Resk0}.

Clearly, $\sigma(\Cop_\omega)= \omega$. Let $\Omega_\omega'$ with boundary $\Gamma'$ be any domain in $\bC$
containing $\Gamma_\omega$ while one still has $\Omega'_\omega \cap (\sigma(\Cop)\setminus\omega)=\emptyset$. Then,   we have (by standard holomorphic functional calculus)
$
\Cop^\ell_\omega = \frac{1}{2\pi i}\int_{\Gamma'} \xi^\ell\Rop_{\Cop_\omega}(\xi)\,d\xi$.
Next recall 
  Hilbert's formula (1st resolvent formula)
\be
\label{1resolv}
 {\Rop}_{\Cop}(\zeta) {\Rop}_{\Cop}(\xi) = (\xi-\zeta)^{-1}\big( {\Rop}_{\Cop}(\zeta)- {\Rop}_{\Cop}(\xi)\big).
\ee
Since $\Omega'_\omega$ strictly contains $\Gamma_\omega $ 
we obtain for $\ug\in V_\omega$ (so that $\ug=\Eop_{\Cop}(\omega)\ug$)
\begin{align*}
\Cop^\ell_\omega \ug &= \Big(\frac{1}{2\pi i}\Big)^2\int_{\Gamma'_\omega}\int_{\Gamma_\omega}
\xi^\ell\Rop_{\Cop}(\xi)\Rop_{\Cop}(\zeta)\ug\, d\xi\,d\zeta 
= \Big(\frac{1}{2\pi i}\Big)^2\int_{\Gamma'_\omega}\int_{\Gamma_\omega}
\frac{\xi^\ell}{\zeta-\xi}\big(\Rop_{\Cop}(\xi)-\Rop_{\Cop}(\zeta)\big)\ug\,
d\xi\,d\zeta.
\end{align*}
Since residue calculus yields
$$
\frac{1}{2\pi i}\int_{\Gamma'_\omega}\frac{\xi^\ell}{\zeta-\xi}\Rop_{\Cop}(\zeta)\ug\,d\xi
= -\zeta^\ell \Rop_{\Cop}(\zeta)\ug,\quad 
\frac{1}{2\pi i}\int_{\Gamma_\omega}\frac{\xi^\ell}{\zeta-\xi}\Rop_{\Cop}(\xi)\ug\,d\zeta
= 0,
$$
(the second relation, because $\xi$ is outside $\Omega_\omega$) relation \eqref{Ck} follows. The rest of the assertion (b) follows from \eqref{Ck}.
\end{proof}

To see whether $(\mu_1^{-1}\Cop)^\ell$ contracts on subspaces $V_\omega$ when $\mu_1\notin\omega$, 
we   specialize Lemma \ref{lem:Ck} to a domain $\Omega=\Omi$
with boundary $\Gami$
such that
\be
\label{Omegae}
\mu_1\notin \Omega_\e,\quad \sigma_{>1}:=\sigma(\Cop)\setminus\{\mu_1\}\subset\Omi,\quad {\rm dist}(\Gamma_\e,\sigma_{>1})\ge {\rm dist}(\Gami,\mu_2)= \e.
\ee
Here $\e$ is a fixed constant whose value will be stipulated later.
\renewcommand{\Omi}{\sigma_{>1}}
The goal is to show that the scaled powers $(\mu_1^{-1}\Cop)^\ell$
(eventually) contract on the invariant subspaces $V_{\Omi}$. 
\begin{lemma}
\label{lem:Ck2}
Adhering to the above notation, let $\wor{1-\Delta =\rat} < \bdel< 1$. Then,
there exists an $\ell_0\in \N$ such that
\be
\label{ezero}
\Vert (\mu_1^{-1}\Cop)^\ell \cE_{\Cop}(\sigma_{>1})\ug\Vert \le \bdel^{\ell-\ell_0} \Vert \cE_{\Cop}(\sigma_{>1})\ug\Vert  ,\quad \ell\in \N,\, \ug\in \Usp.
\ee
If one assumes in addition that 
 $\Cop\in S_p$
for some $1\le p<\infty$, then  $\ell_0\in \N$ can be bounded in terms of
$\bdel,\|\Cop\|_p,\Omega_\e$. 
\end{lemma}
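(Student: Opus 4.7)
The plan is to apply Lemma~\ref{lem:Ck}(b) with $\omega=\sigma_{>1}$ to the vector $\cE_{\Cop}(\sigma_{>1})\ug \in V_{\sigma_{>1}}$, choosing a contour whose radius literally encodes the spectral gap. Concretely, take $\Gamma_\e$ to be the origin-centered circle $\{|\zeta|=\mu_1\overline{\delta}\}$. Since $\overline{\delta}\in(\rat,1)$ and $|\mu_j|\le|\mu_2|=\mu_1\rat<\mu_1\overline{\delta}<\mu_1$ for every $j\ge 2$ (with the only accumulation point $\mu_j\to 0$), this circle encloses all of $\sigma_{>1}$ and leaves $\mu_1$ strictly outside. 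The contour parameter of the paper is then $\mathrm{dist}(\Gamma_\e,\mu_2)=\mu_1(\overline{\delta}-\rat)>0$, while simultaneously $\mathrm{dist}(\Gamma_\e,\mu_1)=\mu_1(1-\overline{\delta})>0$.

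With this choice, since $V_{\sigma_{>1}}$ is $\Cop$-invariant, the estimate \eqref{CE0} applied to $\cE_{\Cop}(\sigma_{>1})\ug$ combined with $\max_{\zeta\in\Gamma_\e}|\zeta|^\ell=(\mu_1\overline{\delta})^\ell$ delivers, after dividing by $\mu_1^\ell$,
\begin{equation*}
  \bigl\|(\mu_1^{-1}\Cop)^\ell\,\cE_{\Cop}(\sigma_{>1})\ug\bigr\|
  \le C_\e\,\overline{\delta}^{\,\ell}\,\bigl\|\cE_{\Cop}(\sigma_{>1})\ug\bigr\|,
  \qquad
  C_\e:=\mu_1\overline{\delta}\cdot\max_{\zeta\in\Gamma_\e}\bigl\|\Rop_{\Cop_{\sigma_{>1}}}(\zeta)\bigr\|.
\end{equation*}
The conclusion \eqref{ezero} then follows by picking $\ell_0:=\lceil \log\max(C_\e,1)/\log(\overline{\delta}^{-1})\rceil$, so that $C_\e\le \overline{\delta}^{\,-\ell_0}$ and the prefactor is absorbed into a shift in the exponent.

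The main obstacle, and what governs the size of $\ell_0$, is estimating $R_\e:=\max_{\zeta\in\Gamma_\e}\|\Rop_{\Cop_{\sigma_{>1}}}(\zeta)\|$. For the unconditional part it suffices to observe that $\zeta\mapsto \Rop_{\Cop_{\sigma_{>1}}}(\zeta)$ is continuous on the compact set $\Gamma_\e\subset\rho(\Cop_{\sigma_{>1}})$ (using \eqref{Resk0} and the fact that $\Gamma_\e$ avoids $\sigma(\Cop)$), whence $R_\e<\infty$ and $\ell_0$ exists. No quantitative control is available at this level of generality, which is the price of merely asking $\Cop$ to be compact and not normal, in line with the comments after \eqref{ideal} on the failure of the naive bound $R_\e\le d(\Gamma_\e,\sigma)^{-1}$ in the non-normal case.

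For the Schatten refinement, the plan is to use $\Rop_{\Cop_{\sigma_{>1}}}(\zeta)=\Rop_{\Cop}(\zeta)|_{V_{\sigma_{>1}}}$ from \eqref{Resk0}, so that $R_\e\le\max_{\zeta\in\Gamma_\e}\|\Rop_{\Cop}(\zeta)\|$, and then invoke the Carleman bound \eqref{Ndev} applied to $\Cop\in S_p$. Since $d(\zeta,\sigma(\Cop))\ge \delta_\e:=\min\{\mu_1(1-\overline{\delta}),\mu_1(\overline{\delta}-\rat)\}>0$ for all $\zeta\in\Gamma_\e$, \eqref{Ndev} gives
\begin{equation*}
  R_\e \;\le\; \delta_\e^{-1}\exp\!\Bigl\{\, a_p\,2^{1+(p-1)_+}\|\Cop\|_p^{\,p}\,\delta_\e^{-p}+b_p\,\Bigr\},
\end{equation*}
so that $C_\e$, and hence $\ell_0\eqsim \log C_\e/\log(\overline{\delta}^{-1})$, is bounded explicitly in terms of $\overline{\delta}$, $\|\Cop\|_p$, and the geometry of $\Omega_\e$ (through $\delta_\e$), as claimed.
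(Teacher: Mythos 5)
Your proof is correct and follows essentially the same route as the paper: the contour representation of powers from Lemma~\ref{lem:Ck}(b), absorption of the resolvent constant into $\ell_0$, and Carleman's bound \eqref{Ndev} for the Schatten-class quantification. The only (harmless) variation is in the Schatten step, where you apply \eqref{Ndev} to the full operator $\Cop$ via \eqref{Resk0} and therefore must also keep the contour at distance $\mu_1(1-\overline{\delta})$ from $\mu_1$, whereas the paper applies it to the restriction $\Cop_{\sigma_{>1}}$ and instead pays the price of showing $\|\Cop_{\sigma_{>1}}\|_p\le 2\|\Cop\|_p$.
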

\begin{proof} It is well-known that $\cR_{\Cop}(\zeta)$ is bounded for each $\zeta\not\in\sigma\wor{(\Cop)}$. Hence $M_\e:= \max_{\zeta\in \Gamma_\e}\|\cR_{\Cop}(\zeta)\|<\infty$. 
Thus, 
\eqref{CE0}
yields for any $\ug\in V_{\sigma_{>1}}$
\be
\label{genest}
\Vert (\mu_1^{-1}\Cop)^\ell \ug\Vert \le \frac{|\Gami|}{2\pi}\Big(\frac{\max_{\zeta\in \Gami}|\zeta|}{\mu_1}\Big)^\ell M_\e \|\ug\|
\le \frac{|\Gami|}{2\pi}\Big(\frac{|\mu_2|+\e}{\mu_1}\Big)^\ell M_\e \|\ug\|, \quad \ug\in V_{\Omi}.
\ee
Now recall that \wo{$\uDelta= 1-\frac{|\mu_2|}{\mu_1} $} and  fix some $\beta\in (0,1)$ to
define 
\wor{
\be
\label{bdelta}
\e:= \beta(\mu_1-|\mu_2|) = \beta\mu_1 \Delta,\quad \bdel:= \frac{|\mu_2|+\e}{\mu_1}=\Delta +\beta(1-\Delta)=\beta + (1-\beta)\Delta.
\ee
}
Thus, $\bdel$ is strictly less than one for $\beta< 1$ and tends to $\Delta$ when 
$\beta$ tends to zero. Hence, on account of \eqref{genest},  \eqref{ezero} follows with
\be
\label{l0gen}\wor{
\ell_0:= \min\,\Big\{\ell\in\N: \frac{\bdel^\ell|\Gami|M_\e}{2\pi}\le 1\Big\}.}
\ee
This confirms the first part of the assertion.

{
\begin{remark}
\label{rem:whySp}
For a general compact non-normal operator $\Cop$ on $\Usp$ we cannot further
quantify the constant $M_\e \worr{=}\max_{\zeta\in \Gamma_\e}\|\cR_{\Cop}(\zeta)\|$ which could be arbitrarily large. Hence, $\ell_0$ 
in \eqref{l0gen} could be arbitrarily large which prevents
us from concluding any meaningful bound on the constant in \eqref{powerconv}.
\end{remark}
}

Now assume that in addition $\Cop\in S_p$ for some $1\le p<\infty$. 
Then \eqref{CE0}
combined with \eqref{Ndev} yields
\begin{align}
\label{CE}
\Vert \Cop^\ell\ug\Vert&= \Vert \Cop_{\Omi}^\ell\ug\Vert  \le  \frac{|\Gami|}{2\pi}(|\mu_2|+\e)^\ell
 \e^{-1}\exp\Big\{\frac{2^{1+(p-1)_+}a_p\Vert \Cop_{\Omi}\Vert _p^p}{\e^p
 }+b_p\Big\}\Vert u\Vert ,
 \quad \ug\in  V_{\sigma_{>1}}.
\end{align}
Next observe that $\Cop_{\Omi}=\Cop E_{\Cop}(\Omi)$ belongs to $S_p$ when $\Cop$ does.
In fact,
\be
\label{OmS}
\|\Cop_{\Omi}\|_p\le 2\|\Cop\|_p,
\ee
To see this, recall that $\Cop = \Cop E_{\Cop}(\mu_1)+ \Cop E_{\Cop}(\Omi)$ so that
$\|\Cop E_{\Cop}(\Omi)\|_p\le \|\Cop  \|_p +  \|\Cop E_{\Cop}(\mu_1)\|_p$. 
Since $\Cop E_{\Cop}(\mu_1)$ has rank one its single non-zero singular value is
given by the square root of
$$
\max_{\ug\in \Usp}\frac{\< E_{\Cop}(\mu_1)\ug,\Cop E_{\Cop}(\mu_1)(\ug)\>}{\|\ug_1\|^2} = \frac{\< E_{\Cop}(\mu_1)\ug_1,\Cop E_{\Cop}(\mu_1)\ug_1\>}{\|\ug_1\|^2} = \mu_1^2.
$$
Since by Weyl's inequality eigenvalues are dominated by singular values \eqref{OmS} follows.

 Hence, we conclude from \eqref{CE} that
\be
\label{1or2}
\Vert \mu_1^{-\ell}\Cop_{\Omi}^\ell\ug\Vert \le \Big(\frac{|\mu_k|+\e}{\mu_1}\Big)^\ell 
M(\beta,\Delta, \Cop,p) \Vert u\Vert,\quad \ug\in V_{\Omi},
\ee 
where
\be
\label{MM}
M(\beta,\Delta, \Cop,p):= \frac{|\Gami|}{2\pi \e}
\exp\Big\{\frac{2^{p+1+(p-1)_+}a_p\Vert \Cop \Vert _p^p}{(\beta(\mu_1-|\mu_2|))^p
 }+b_p\Big\}.
\ee 
Taking in analogy to \eqref{l0gen}
\be
\label{l0}
\ell_{0 }=\ell_{0 }(\Delta,\beta,\Cop,p):= \argmin\big\{\ell\in\N: \bdel^\ell  M(\Delta,\beta,\Cop,p)\le 1\big\},
\ee
finishes the proof.
\end{proof}
\medskip

To proceed we define a new  (equivalent) norm on $\Usp$ by 
\be 
\label{newnorm}
\tripnorm{\ug}:=  \Vert E_{\Cop}(\mu_1)\ug\Vert + \Vert E_{\Cop}(\Omi)\ug\Vert ,
\ee
i.e., there exist constants $c_1,C_1$, 
depending on $\Cop$ such that
\be
\label{equiv}
  c_1\tripnorm{\cdot}\le \Vert \cdot\Vert \le C_1 \tripnorm{\cdot}.
  \ee

Recalling the shorthand notation $\< \ug\>:= {\rm span}\{\ug\}$, it will be convenient
to introduce the ``distance''
\be
\label{distsp}
\dist(\<\ug\>,\<\ug_1\>):= \min_{c\in \R} \Vert c\ug - \ug_1\Vert .
\ee

\begin{lemma}
\label{lem:contract}
Assume that $\wor{a^\circ_0}\in \Usp$ satisfies $\Vert \wor{a^\circ_0}\Vert =1$ and 
\wor{$a^\circ_0\ge 0$} so that
$$
E_{\Cop}(\mu_1)\wor{a^\circ_0} =\alpha \ug_1\quad \wor{\mbox{for some $\alpha > 0$}}.
$$
Let $\wor{a^\circ_n}, \rho_n, \ell_0$ be defined by \eqref{power}, \eqref{ray}, and \eqref{l0}.    Then we have
\be
\label{contract1}
\alpha\, \corr{\dist}(\<\wor{a^\circ_n}\>,\<\ug_1\>) \le \Vert (\mu_1^{-1}\Cop)^n\wor{a^\circ_0}- \alpha\ug_1\Vert \le \frac{C_1}{c_1}\bdel^{n- \ell_0} ,\quad n\in \N.
\ee
Furthermore,
\be
\label{contract2}
\Vert \wor{a^\circ_n}- \ug_1\Vert \le    {\wor{\sqrt{2}}\dist(\<\wor{a^\circ_n}\>,\<\ug_1\>)\le \frac{2C_1}{\alpha c_1}}\bdel^{n- \ell_0},\quad n\in \N,  
\ee
and
\be
\label{contract3}
|\rho_{n+1}- \mu_1|\le C\bdel^{n- \ell_0},\quad n\in \N,
\ee
where $C = \alpha^{-1}(\mu_1+\Vert \Cop\Vert )\frac{C_1}{c_1}$.
\end{lemma}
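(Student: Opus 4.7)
The plan is to exploit the direct sum decomposition $\Usp = V_{\mu_1} \oplus V_{\sigma_{>1}}$ induced by the Riesz projections $E_{\Cop}(\mu_1)$ and $E_{\Cop}(\sigma_{>1})$, together with the contraction of $\mu_1^{-1}\Cop$ on $V_{\sigma_{>1}}$ provided by Lemma \ref{lem:Ck2}.

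First, I would decompose $\wor{a^\circ_0} = E_{\Cop}(\mu_1)\wor{a^\circ_0} + E_{\Cop}(\sigma_{>1})\wor{a^\circ_0} = \alpha\ug_1 + w_0$, where $w_0 \in V_{\sigma_{>1}}$. Because $V_{\mu_1}$ and $V_{\sigma_{>1}}$ are $\Cop$-invariant and $\Cop\ug_1 = \mu_1\ug_1$, iterating gives
\begin{equation*}
(\mu_1^{-1}\Cop)^n \wor{a^\circ_0} = \alpha\ug_1 + (\mu_1^{-1}\Cop)^n w_0, \quad n\in\N.
\end{equation*}
By Lemma \ref{lem:Ck2}, $\|(\mu_1^{-1}\Cop)^n w_0\| \le \bar\delta^{n-\ell_0}\|w_0\|$, and the norm equivalence \eqref{equiv} bounds $\|w_0\| = \|E_{\Cop}(\sigma_{>1})\wor{a^\circ_0}\| \le \tripnorm{\wor{a^\circ_0}} \le c_1^{-1}\|\wor{a^\circ_0}\| = c_1^{-1}$; using the (slightly coarser) bound $C_1/c_1 \ge 1/c_1$ then yields the upper inequality in \eqref{contract1}. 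The lower inequality is immediate from the definition of $\dist$: since $\wor{a^\circ_n}$ is a positive rescaling of $(\mu_1^{-1}\Cop)^n\wor{a^\circ_0}$, we have $\<\wor{a^\circ_n}\> = \<(\mu_1^{-1}\Cop)^n\wor{a^\circ_0}\>$, so
\begin{equation*}
\dist(\<\wor{a^\circ_n}\>,\<\ug_1\>) \le \|\alpha^{-1}(\mu_1^{-1}\Cop)^n \wor{a^\circ_0} - \ug_1\| = \alpha^{-1}\|(\mu_1^{-1}\Cop)^n\wor{a^\circ_0} - \alpha\ug_1\|.
\end{equation*}

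Next, for \eqref{contract2}, the factor $\sqrt{2}$ comes from the standard unit-vector-to-subspace identity: since $\|\wor{a^\circ_n}\| = \|\ug_1\| = 1$, setting $c^* = \arg\min_c\|c\wor{a^\circ_n} - \ug_1\|$ gives $c^*\wor{a^\circ_n}$ as the orthogonal projection of $\ug_1$ onto $\<\wor{a^\circ_n}\>$, hence $c^* = \<\ug_1,\wor{a^\circ_n}\>$, $\dist = \sqrt{1-(c^*)^2}$, and $\|\wor{a^\circ_n} - \ug_1\|^2 = 2(1 - c^*) = 2\dist^2/(1+c^*) \le 2\dist^2$ as long as $c^* \ge 0$. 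Positivity of $c^*$ follows from the positivity assumption on $\wor{a^\circ_0}$, the positivity of $\Cop$ (Theorem \ref{thm:positive-op}), and the strict positivity of $\ug_1$: every iterate $\wor{a^\circ_n}$ lies in $\Usp^+$, so $\<\wor{a^\circ_n},\ug_1\> > 0$. Combining with \eqref{contract1} yields \eqref{contract2}.

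Finally, for \eqref{contract3}, I would write $\wor{a^\circ_n} = \ug_1 + r_n$ with $r_n := \wor{a^\circ_n} - \ug_1$ and expand
\begin{equation*}
\rho_{n+1} - \mu_1 = \<\Cop\wor{a^\circ_n},\wor{a^\circ_n}\> - \mu_1\|\ug_1\|^2 = \<\Cop\ug_1,r_n\> + \<\Cop r_n,\ug_1\> + \<\Cop r_n, r_n\>.
\end{equation*}
Using $\Cop\ug_1 = \mu_1\ug_1$ in the first term and Cauchy--Schwarz in the remaining two yields $|\rho_{n+1} - \mu_1| \le (\mu_1 + \|\Cop\|)\|r_n\| + \|\Cop\|\|r_n\|^2$. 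Since the linear term dominates as $n\to\infty$, inserting \eqref{contract2} produces a bound of the form $C\bar\delta^{n-\ell_0}$ with $C$ of the asserted form.

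The main subtlety is ensuring that the minimizing scalar $c^*$ in the $\sqrt{2}$ step is nonnegative; this relies crucially on the positivity structure inherited from Theorem \ref{thm:positive-op} and the assumption $\wor{a^\circ_0}\ge 0$, and is the only place where the linearized-angle estimate $\|\wor{a^\circ_n} - \ug_1\| \le \sqrt{2}\dist(\<\wor{a^\circ_n}\>,\<\ug_1\>)$ would otherwise fail (at worst, one could only bound by $2$). Everything else is a careful bookkeeping exercise around Lemma \ref{lem:Ck2} and the norm equivalence \eqref{equiv}.
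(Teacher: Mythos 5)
Your proof is correct and follows essentially the same route as the paper: the Riesz-projection splitting $a^\circ_0=\alpha u_1+E_{\Cop}(\sigma_{>1})a^\circ_0$ combined with Lemma \ref{lem:Ck2} and the norm equivalence \eqref{equiv} for \eqref{contract1}, the unit-vector identity $\Vert a^\circ_n-u_1\Vert^2=2(1-s_n)$ with $s_n=\<a^\circ_n,u_1\>\ge 0$ (guaranteed by positivity of $\Cop$ and of $a^\circ_0$) for the $\sqrt{2}$ factor in \eqref{contract2}, and an expansion of the Rayleigh quotient for \eqref{contract3}. The only cosmetic difference is in the last step, where the paper uses the symmetric splitting $\<\Cop a^\circ_n,a^\circ_n\>-\<\Cop u_1,u_1\>=\<\Cop(a^\circ_n-u_1),a^\circ_n\>+\<\Cop u_1,a^\circ_n-u_1\>$, which avoids your quadratic remainder $\Vert\Cop\Vert\,\Vert r_n\Vert^2$ (harmless, since $\Vert r_n\Vert\le 2$, but it inflates the constant) and yields the bound $(\mu_1+\Vert\Cop\Vert)\Vert a^\circ_n-u_1\Vert$ directly.
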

\begin{proof}
Since $\ug_1= (\mu_1^{-1}\Cop)^n\ug_1$ and $E_{\Cop}(\Omi)(\mu_1^{-1}\Cop)^n\ug_1
=0$ for all $n\in\N$,  we have
\begin{align*}
C_1^{-1}\Vert (\mu_1^{-1}\Cop)^n\wor{a^\circ_0} - \alpha \ug_1\Vert &\le
\tripnorm{(\mu_1^{-1}\Cop)^n\wor{a^\circ_0} - \alpha \ug_1}\\
&= \Vert E_{\Cop}(\mu_1)((\mu_1^{-1}\Cop)^n\wor{a^\circ_0} - \alpha\ug_1)\Vert +
 \Vert E_{\Cop}(\Omi)(\mu_1^{-1}\Cop)^n\wor{a^\circ_0}\Vert   \\
&= \Vert ((\mu_1^{-1}\Cop)^n (E_{\Cop}(\mu_1)\wor{a^\circ_0} - \alpha\ug_1)\Vert +
 \Vert E_{\Cop}(\Omi)(\mu_1^{-1}\Cop)^n\wor{a^\circ_0}\Vert   \\
 &=  \Vert (\mu_1^{-1}\Cop)^n(E_{\Cop}(\Omi)\wor{a^\circ_0})\Vert \\
 &\le \bdel^{\wor{n}-\ell_0} \Vert E_{\Cop}(\Omi)\wor{a^\circ_0}\Vert\\
 &\le \bdel^{n-\hat\ell}\tripnorm{  \wor{a^\circ_0} }\le c_1^{-1}\bdel^{n-\hat\ell}\Vert \wor{a^\circ_0}\Vert 
= c_1^{-1}\bdel^{n-\hat\ell},
\end{align*}
where we have use Lemma \ref{lem:Ck2} in the second but last line.
This confirms the
second inequality in \eqref{contract1}. Since $\<\wor{a^\circ_n}\>= \<\Cop^n \wor{a^\circ_0}\>= 
\<c\Cop^n \wor{a^\circ_0}\>$ for all $c\neq 0$ and $n\in\N$, the first inequality in 
\eqref{contract1} follows as well.

To proceed let $s_n:= \argmin_{s\in\wor{\R}}\Vert s \wor{a^\circ_n}- \ug_1\Vert\corr{=\<a^\circ_n, u_1\>}$. \wor{Since $a^\circ_0\ge 0$ and $\Cop$ is
a positive operator  we have $s_n\ge 0$ for all $n\in\N$. Observing by direct calculation that $\|a^\circ_n-u_1\|^2=2(1-s_n)$ and $\|s_n a^\circ_n -u_1\|^2 = 1-s_n^2$,
we obtain
$$
\dist\big(\<a^\circ_n\>,\<u_1\>\big)^2 = \|s_n a^\circ_n -u_1\|^2= (1+s_n)(1-s_n)= \frac{1+s_n}{2}\|a^\circ_n-u_1\|^2,
$$
which, upon using \eqref{contract1}, proves \eqref{contract2}.}

Concerning \eqref{contract3}, one has
\begin{align*}
|\rho_{n+1}-\mu_1|&= |\< \Cop\wor{a^\circ_n},\wor{a^\circ_n}\>- \<\Cop \ug_1,\ug_1\>|\le
|\< \Cop(\wor{a^\circ_n}-\ug_1),\wor{a^\circ_n}\>| + |\<\Cop\ug_1,\wor{a^\circ_n}- \ug_1\>|\\
&\le \Vert \Cop(\wor{a^\circ_n}-\ug_1)\Vert + \mu_1\Vert \wor{a^\circ_n}- \ug_1\Vert \le (\mu_1+\Vert \Cop\Vert )
\Vert \wor{a^\circ_n}- \ug_1\Vert 
\end{align*}
which completes the proof.
\end{proof}

\subsection{\wo{Practical Aspects}}
\label{ssec:ram}
\newcommand{\ec}{{e^\circ}}
The preceding developments neither pretend nor intend to offer a directly applicable algorithm, but rather  formulate concepts that, in contrast to the ``classical'' approach 
realize accuracy quantification without (unrealistic) regularity assumptions. Nevertheless, the various routines are implementable. For the work horse $[\cB^{-1},f;\eta]$ this has been demonstrated in \cite{DGM2020}. A complete certifiable eigensolver based on the proposed paradigm, would of course require deviating significantly from existing software structures, hence requires in essence coding from scratch. More importantly, certifiability would require knowledge of (or at least good estimates for) the ``problem parameters'' 
    $\|\Cop\|, \theta, \uDelta$ which is generally not available. The question remains, are these quantities computationally accessible if the knowledge of specific optical parameters in the Boltzmann operator doesn't suffice.  Regarding $\|\Cop\|$ an answer should be affirmative. In fact, the Rayleigh \worr{quotients} for the self adjoint positive definite operator $\Cop^*\Cop$ would tend to $\|\Cop\|^2$
and the power method is in this case much easier to analyze and faster, due to the underlying orthonormal eigensystem. This also yields an upper bound for $\m$. 

Assessing the quantities $\theta, \uDelta$ instead seems to be much harder. Although we do not know of a concise algorithmic strategy for approximating these quantities with quantifiable certainty, there are heuristics that we expect to provide sufficient information to underpin an implementation. This is beyond the scope (and also spirit) of the present paper so that we are content with some brief indications and leave a more careful
elaboration to future work with a numerical focus.
First, in view of the relations \eqref{sandwich}, a good (lower bound for $\uDelta$ would
also yield an indication of $\theta$ (up to a factor depending on the angle between eigenvector and singular vector in $\U_\circ^\perp$). Approximating $\theta$ directly,
based on \eqref{newtheta}, could be attempted   through a power method where one has to
deal though with the restriction to $\U_\circ^\perp$ in each step. Although we don't know $\u$ yet, in view of 
$\U_\circ^\perp = \cE_{\Cop^*}(\sigma(\Cop^*)\setminus \{\m\})\U$, this restriction
can be realized through the Riesz projection $\cE_{\Cop^*}(\sigma(\Cop^*)\setminus \{\m\})$
which, in turn, can be approximated, in principle, by the quadrature approach, outlined at the end of
Section \ref{ssec:prac}. However, the accuracy of this step is uncertain, precisely because we don't know $\uDelta$. Nevertheless, a lower bound for $\m$ might lead to a reasonable contour that permits further computational exploration. Such a lower bound, in turn, could
be obtained from the Rayleigh quotients of the power iteration discussed above. Lacking
knowledge of $\uDelta$, it seems that all one can do is to apply $\Cop$ with very high accuracy and monitor the increase of the Rayleigh quotients, admittedly, a heuristic approach whose cost to success cannot be estimated.

These comments suffice perhaps to appraise remaining intrinsic obstructions reflecting the principal hardness of this type of spectral problems.

\newcommand{\bw}{\mathbf{w}}
\newcommand{\bv}{\mathbf{v}}
\newcommand{\C}{\mathbb{C}}
\newcommand{\bc}{\mathbf{c}}
\renewcommand{\bC}{\mathbf{C}}
\renewcommand{\bD}{\mathbf{D}}
\renewcommand{\bJ}{\mathbf{J}}
\renewcommand{\lll}{\<\!\<}
\newcommand{\rr}{\>\!\>}

\section{Concluding Remarks}\label{sec:conclusion}
Our overarching goal is to develop and analyze a framework that
eventually leads to an accuracy controlled solution of the criticality problem for a model version of the  neutronic equation which nevertheless is general enough to exhibit its
intrinsic difficulties. On the one hand, this model is behind important 
applications related to future energy production. On the other hand, it is an example of an intrinsically difficult spectral problem and many of the proposed concepts
have some bearing beyond the specific model setting.
  Rendering a rigorous accuracy control, independent of any unrealistic a priory regularity requirements, is achieved by
a paradigm that deviates strongly from common practice. 
It requires
properly intertwining idealized iterations in a model compliant function space
with dynamically updated numerical approximations that are controlled by a posteriori quantities. At no stage will the numerical outcome result from a single a priori chosen discretization.
In both regards, stable variational formulations of the underlying PDE model play a key role. This paradigm, albeit in different formal disguises, has led in the past to a first complete complexity and convergence analysis of {\em wavelet methods} for PDEs \cite{CDD2}, as well as of finite element methods \cite{BDD}, and of low-rank and tensor methods for high-dimensional PDEs and Uncertainty Quantification \cite{BD}. From an analytical perspective, the problem types in these latter scenarios are in many ways more benign.
In the present setting it seems that we cannot completely ensure convergence success, at least not in conjunction with quantitative complexity bounds, \wo{see the comments in Section \ref{ssec:ram}}. Just being able 
to computationally approximate the principal eigenpair within a desired accuracy
is a challenge and the importance of the application may welcome even enormous computational effort if it leads to a certifiable  improved outcome quality. In this regard, we have tried
to complement the rigorous part of the analysis by possible computational techniques
that better and better cope with the most essential knowledge gap, namely a sufficiently
good estimate of the spectral gap.
 
\bmhead{Acknowledgements}

Olga Mula kindly thanks the Smart State Chair from South Carolina University for funding her stay at that university in 2019 when the present research was initiated.
We thank the reviewers for valuable suggestions that helped us to improve on the
presentation of the material.

\bmhead{Funding:} This research was supported by
the  NSF Grants DMS 2038080, DMS-2012469, DMS-2245097, by the SmartState and Williams-Hedberg Foundation, by the  SFB 1481, funded by the German Research Foundation. It was also funded by the Emergences Project Grant ``Models and Measures'' from the Paris City Council.

\begin{appendices}
\section{Proof of Lemma \ref{lem:contr}}
\label{appendixA}
We observe first that $\int_{\Ddom\times \Vdom} u (v\cdot\nabla u) \dx\dv\ge 0$ holds for
every $u\in H_{0,-}(\Ddom\times\Vdom)$ where $H_{0,-}(\Ddom\times\Vdom)$ is defined in analogy to \eqref{SP-w}.
In fact, integrating by parts gives
$$
\int_{\Ddom\times \Vdom} u (v\cdot\nabla u) \dx\dv 
= 
- \int_{\Ddom\times \Vdom} u (v\cdot\nabla u) \dx\dv
+
\int_{\Gamma} u^2 v\cdot n.
$$
Since $u\in H_{0,-}(\Ddom\times\Vdom)$, this provides
\begin{equation}
\label{eq:monotonocity-advection}
\int_{\Ddom\times \Vdom} u (v\cdot\nabla u) \dx\dv
= \frac 1 2 \int_{\Gamma} u^2 v\cdot n
= \frac 1 2 \int_{\Gamma_+} u^2 v\cdot n \geq 0,
\end{equation}
by definition of $\Gamma_+$, see also \cite[28, p1105]{DLvol6}.

Next, given $g\in L_2
(\Ddom\times \Vdom)$, there exists a unique $u\in H_{0,-}(\Ddom\times \Vdom)$ (see Theorem \ref{thm:T-1}, \eqref{Ts-1}) such that
   $\cT u= \cK g$. 
Multiplying $\cT u= \cK g$ by $u$, integrating over $\Ddom\times\Vdom$, 
and using \eqref{eq:monotonocity-advection} and $\kappa\ge 0$, we obtain
\begin{align}
\int_{\Ddom\times\Vdom} \sigma u^2
&\leq
\int_{\Ddom\times\Vdom\times \Vdom} \kappa(x, v, v') u(x, v)g(x, v')\dx\dv\dv' \nonumber\\
&\leq \int_{\Ddom} \left(\int_{\Vdom\times \Vdom}\kappa(x, v, v') u(x, v)^2\dv\dv' \right)^{1/2} \left(\int_{\Vdom\times \Vdom}\kappa(x, v, v') g(x, v')^2\dv\dv' \right)^{1/2} \dx,
\end{align}
where we have used Cauchy-Schwarz' inequality.

One deduces now from condition (H4) that  $\int_{\Vdom}
\kappa(x,v,v') dv'\le \rho\sigma (x,v)$ and  $\int_{\Vdom}
\kappa(x,v,v') dv\le \rho\sigma (x,v')$ for some constant $\rho<1$.
Using these bounds in the above inequality, yields
$$
\int_{\Ddom\times\Vdom}\sigma |u|^2 \dx\dv \le \rho^2 \int_{\Ddom\times\Vdom}\sigma |g|^2 \dx\dv
$$
which means $\|u\|_{\U^{(\sigma)}} \le \rho \|g\|_{\U^{(\sigma)}}$ and hence \eqref{T-1K}.
\hfill$\Box$

\section{Proof of Lemma \ref{lem:bal}}
\label{appendixB}
One shows first that $\u$ is strictly positive except on $\Gamma_{-}$.
In fact, by the preceding observations,
$$
\m\u=(\id- \cT^{-1}\cK)^{-1}\cT^{-1}\cF \u \ge \cT^{-1}\cF \u.
$$
Defining for $x,x'\in \Ddom$ the optical path  
$$
p(x,x',v):= \int_0^{|x-x'|}\sigma\Big(x+s\frac{x'-x}{|x'-x|},v\Big) ds, 
$$
and characteristic distance $d(x,v)$
from the boundary $\partial\Ddom$, i.e., $x-d(x,v)v\in \partial\Ddom$, the explicit representation of $\Top^{-1}$ yields
\be
\label{positive}
\m \u(x,v) \ge \int_0^{d(x,v)}\exp\big(-p(x,x-s d(x,v),v)\big)(\cF \u)(x-sv,v)ds.
\ee

One needs to show then that the right-hand side of \eqref{positive} is strictly positive
except for $(x,\v)\in \Gamma_-$. This is not quite obvious yet because a non-trivial $\u$
could have the property that for $x$ in a small neighborhood of some $x_0$, $\u(x-s\v,\v')=0$ holds for all $s$ and $\v'\in\Vdom$. Therefore, $\Fop$ being linear and positive,
an additional application of $\Fop$ to both sides of \eqref{positive} yields
$\m\Fop\u\ge \Fop\Top^{-1}\Fop\u$. Thus, as soon as one shows $\Fop\Top^{-1}\Fop\u$
is strictly positive on $\overline{(\Ddom\times \Vdom)}\setminus \Gamma_-$, 
corresponding strict positivity follows from \eqref{positive}. To that end,
elementary manipulations show that
\begin{align*}
(\Fop\Top^{-1}\Fop\u)(x,\v)&= \int_{\Vdom}\varphi(x,\v',\v)\int_0^{d(x,\v')}
e^{-p(x,x-s\v',\v')}\worr{\int_{\Vdom}\varphi(x-s\v',\v'',\v')d\v'\u(x-s\v',\v'')\, ds dv' d\v''}.
\end{align*}
\worr{In \cite[Theorem 7, p. 1154]{DLvol6} it is proved that the right-hand side expression is strictly positive in $\Ddom\times \Vdom$ except on $\Gamma_-$. We briefly explain the
essence of the argument.
By (H1) and (H2), $\Ddom$ is convex,   hence star-shaped for every $x\in \Ddom$, and
$\Vdom$ contains a set that is homeomorphic to the sphere. Thus,
the arguments $(x-s\v',\v'')$ of $\u$ traverse all of $\Ddom\times \Vdom$. Moreover by (H5), $ \u(x-s\v',\v'')$ is multiplied by strictly positive quantities. Thus,
for the right-hand side to vanish on a set of strictly positive measure   in $(\Ddom\times\Vdom)\setminus \Gamma_-$, $\u$ would have
to vanish everywhere in $\Ddom\times \Vdom$.}  This means, in particular, that 
$\Fop\Top^{-1}\Fop$ can be represented as an  integral operator with a strictly positive kernel. We refer to \cite[Theorem 7, p. 1154]{DLvol6} for detailed arguments to that end.

As a next step it is shown that $\u$ is the only positive eigenstate. 
 This is based on considering the dual problem $\Cop^* u=\mu u$ where $
\Cop^*= \Fop^*\Bop^{-*}$ is the adjoint of $\Cop$. Note that $\Cop$ and $\Cop^*$ share the same spectral radius and $\m$ is also eigenvalue of $\Cop^*$ associated with a (different) positive eigenstate, denoted by $\u_*$ vanishing only on $\Gamma_+$. Suppose that $(\bar\mu,\bar u)$ is any other
eigenpair of $\Cop$ with $\bar\mu\neq \m$ so that 
$$
\m \<\bar u,\u_*\> = \<\bar u,\Cop^*\u_*\>=\<\Cop \bar u,\u_*\> =\bar\mu\<\bar u,\u_*\>  
$$
hence
$$
(\bar\mu^{-1}-\m^{-1})\<\bar u,\u_*\>=0,
$$
which, since $\bar\mu\neq \m$, implies $\<\bar u,\u_*\>=0$. This is impossible when 
$\bar u$ is non-trivial and non-negative, a contradiction. 

It remains to prove that 
$\m$ is simple. 
\newcommand{\w}{w^\circ}
To that end, consider the operator $(\Fop\Bop^{-1})^* = \Bop^{-*}\Fop^*$ which has the same
spectral radius as $\Fop\Bop^{-1}$ and as $\Bop^{-1}\Fop$, where the last fact follows from
the general property that for bounded linear operators $A,B$ the products $AB$ and $BA$
have the same spectral radius. This can be deduced by using that the spectral radius 
of an operator $A$ equals $\lim_{n\to\infty}\|A^n\|^{1/n}$.
Moreover, by the same arguments as before, $\Bop^{-*}\Fop^*$ has a   positive 
eigenstate associated with the spectral radius $\m$ which we denote by $\hat\u_*$.

Suppose now there exists another linearly independent eigenstate $\w$ of $\Cop$, associated with 
$\m$. \worr{Then we can orthonormalize $\w$ in the eigenspace spanned by $\u$ and $\w$
yielding an eigenvector $ w$ in this subspace that satisfies $\<\u, w\>=0$. Since $\u$ is
strictly positive (except on $\Gamma_-$) $ w$ must change sign.}
Since
$|\Cop^2 w|= |\m^2 w|=\m^2|w|$, one observes that
\be
\label{E1}
E_1= \< |\Cop^2 w|, \Fop^* \hat\u_*\> = \m^2\<|w|,\Fop^* \hat\u_*\> >0.
\ee
On the other hand,
\be 
\label{E2}
E_2=\<\Cop^2 |w|,\Fop^*\hat\u_*\>= \<|w|, (\Cop^*)^2 \Fop^*\hat\u_*\> = 
\<|w|, \Fop^*\Bop^{-*}\Fop^*\Bop^{-*}\Fop^* \hat\u_*\>.
\ee
Moreover, note that $\Fop^*\hat\u_*$ is a strictly positive eigenstate of $\Fop^*\Bop^{-*}$
because $\Fop^*\Bop^{-*}\Fop^*\hat\u_*=\Fop^* (\Fop\Bop^{-1})^*\hat\u_*= \Fop^*\m \hat\u_*$. Thus, 
$$
E_2= \<|w|,\m^2\Fop^*\hat\u_*\> = E_1 >0.
$$
Next one shows $E_1<E_2$ which is the desired contradiction and confirms simplicity of $\m$. To that end, 
since $w$ changes sign, $|w|\pm v$ are both non-trivial non-negative functions.
Note also that 
$$
\Bop^{-1}= (\Top(\id- \Top^{-1}\Kop))^{-1} = (\id -\Top^{-1}\Kop)^{-1}\Top^{-1}
= \sum_{j=0}^\infty (\Top^{-1}\Kop)^j\Top^{-1}\ge \Top^{-1},
$$
where we have used Lemma \ref{lem:contr}. Now recall that $\Fop\Top^{-1}\Fop$ is an integral
operator with strictly positive kernel. Then 
$$
\Cop^2(|w|\pm w)= (\Top-\Kop)^{-1}\Fop(\Top-\Kop)^{-1}\Fop(|w|\pm w)\ge  (\Top-\Kop)^{-1}
\Fop \Top^{-1}\Fop (|w|\pm w) >0 \quad \mbox{in~ \worr{$\overline{\Ddom\times\Vdom}\setminus \Gamma_-$}}.
$$
This, in turn, implies
$$
\Cop^2|w| > |\Cop^2w|,
$$
and hence $E_2>E_1$, the announced contradiction.\hfill$\Box$

\section{Proof of Remark \ref{rem:relation}}
\label{appendixC}
Regarding the interrelation between the inf-sup constant $\theta$ and the spectral gap 
 $\uDelta$, stated in Remark \ref{rem:relation}, we first identify $\U_\circ^\perp$ in terms of Riesz projections. To that end,
 recall the two direct sum decompositions
that
\be
\label{UV}
\U=\U_\circ \oplus_\perp \U_\circ^\perp = \U_\circ\oplus \,\Vsp, \quad \Vsp:= \cE_{\Cop}( \sigma(\Cop)\setminus\{\m\})\U .
\ee 
 $\U_\circ^\perp$ and $ \Vsp $ both have co-dimension one but are generally different 
 (they are equal when $\Cop$ is a normal operator). Observe next that
 \be
\label{ident}
\Vsp^* = \U_\circ^\perp,\quad  \Vsp= \<\u_*\>^\perp.
\ee
To see this,
we apply analogous properties of Riesz projections  to the adjoint $\Cop^*$, where now $\Vsp^*$ is the 
$\Cop^*$-invariant subspace associated with $\sigma(\Cop^*)\setminus \{u_*^\circ\}$,
$\u_*$ being the simple eigenvector of $\Cop^*$ associated with $\m$.
Then, for any $v\in \Vsp$ and $\sigma_{>1}:= \sigma(\Cop)\setminus\{\m\}$, one has  
$$
\<\u_*, v\>= \<\cE_{\Cop^*}(\m)\u_*,
\cE_{\Cop}(\sigma_{>1})v\>= \<\u_*, \cE_{\Cop^*}^*(\m)\cE_{\Cop}(\sigma_{>1})v\>
= \<\u_*,\cE_{\Cop}(\m)\cE_{\Cop}(\sigma_{>1})v\> =0,
$$
where we have used \eqref{anihi} in the last step. The same reasoning applies to
$\U_\circ^\perp=\<\u\>^\perp$, so that
\be
\label{ortho*}
\u_*\perp \Vsp,\quad  \u \perp \Vsp^* ,
\ee
which is \eqref{ident}.
 
The verification of the inequalities \eqref{sandwich} will follow from
judicious substitutes of the maximizer/minimizer in the inf-sup condition from
these complement spaces.
To that end, recall the meaning of  $\rc\in \U_\circ^\perp = \Vsp^*$, $u^*_\La\in \Vsp^*=\U_\circ^\perp$, $\lambda_\La$ from  Remark \ref{rem:relation}.
In particular, $\bar\lambda_\La\Cop^*u^*_\La = u^*_\La$ so that $u^*_\La$ is also
an eigenstate of $(\Mop_\l^\circ)^*$ with eigenvalue $1- \l/\bar\lambda_\La$.
Then
one deduces from \eqref{thetaint} that
$$
\theta= \sup_{\substack{v\in \U_\circ^\perp\\ \|v\|=1}}\< \Mop_\l \rc,v\>
\ge 
|\<  \rc,\Mop_\l^* u^*_{\La}\>| =|1-\l/\bar\lambda_\La||\<  \rc, u^*_{\La}\>|
\ge (1-\l/|\bar\lambda_{\La}|)\<  \rc, u^*_{\La}\>|,
$$
which is the lower estimate in \eqref{sandwich}.

To see the upper estimate in \eqref{sandwich}, denote by $u_{\La}$
an eigenvector of $\Cop$ associated with the largest in modulus eigenvalue $\mu_2\in \sigma_{>1}$.
Since $u_{2}$ belongs to $\Vsp$, which generally differs from $\U_\circ^\perp=\Vsp^*$,
we let $z:= P_{\U_\circ^\perp}u_{\La}\in \U_\circ^\perp$, i.e., $u_{\La}=
z+ P_{\U_\circ}u_{\La}$. Since $\|z\| = \sqrt{\|u_{\La}\|^2- \|P_{\U_\circ}u_{\La}\|^2}
= \sqrt{1- \|P_{\U_\circ}u_{\La}\|^2} =:s  >0$ and since
$$
\Mop^\circ_\l z= \Mop^\circ_\l\big( u_{\La}- P_{\U_\circ}u_{\La}\big)= \Mop^\circ_\l 
u_{\La}= (1- \l \mu_{\La})u_{\La}
$$
we conclude that  
$$
\theta\le \sup_{\substack{v\in \U_\circ^\perp\\ \|v\|=1}}\frac{|\<\Mop_\l z,v\>|}{ s }
= \sup_{\substack{v\in \U_\circ^\perp\\ \|v\|=1}}|1- \l\mu_{2}|s^{-1}|\<u_{\La},v\>|
=s^{-1} |1- \l\mu_{2}|\|z\|^2 =s \oDelta,
$$
which is the upper bound in  \eqref{sandwich}.

When $\Cop$ is a normal operator $\U_\circ^\perp = {\U_\circ^*}^\perp$ so that ${\rm dist}\big(\U_\circ^\perp,
{\U_\circ^*}^\perp\big)=0$. Moreover, the smallest singular value of $\Mop^\circ_\l$
agrees then with its smallest eigenvalue which, in turn is one minus the largest
eigenvalue of $\l\Cop u= \mu u$ which we know is $\l \lambda_\La^{-1}$ with eigenvector 
$u_\Lambda$.  Hence, in this case $\theta = \oDelta = \uDelta$ which is \eqref{normal}.

 The non-trivial gap in \eqref{sandwich} can thus be
viewed as quantifying deviation from normality.
\hfill $\Box$
\end{appendices}


\bibliography{literature}

\end{document}